\documentclass[final,leqno]{siamltex}

% definitions used by included articles, reproduced here for
% educational benefit, and to minimize alterations needed to be made
% in developing this sample file.
\usepackage{amsmath}
\usepackage{amssymb}
\usepackage{graphicx}
\usepackage{color}
\usepackage{ifpdf}
\usepackage{url}

\def\norm#1{\|#1\|}

\newcommand{\Eproof}{\hfill$\square$}

%+ Renewcommand symbols
\newtheorem{algorithm}{\normalfont\textsc{Algorithm}}
\newtheorem{assumption}{Assumption A.\!\!}

\newcommand{\dnorm}[1]{|\!\|#1\|\!|}
\newcommand{\abs}[1]{\left\vert#1\right\vert}
\newcommand{\dom}{\mathrm{dom}}
\renewcommand{\int}{\mathrm{int}}
\newcommand{\argmin}{\mathrm{arg}\!\min}
\newtheorem{remark}{Remark}
\newcommand{\aref}[1]{\textrm{\textsc{A.\ref{#1}}}}

% some definitions of bold math italics to make typing easier.
% They are used in the corollary.

\title{An Inexact Perturbed Path-Following Method for Lagrangian Decomposition in Large-Scale Separable Convex Optimization}

%  \thanks{This
%         work was supported by the Society for Industrial and
%         Applied Mathematics, Philadelphia, Pennsylvania.}}

% The thanks line in the title should be filled in if there is
% any support acknowledgement for the overall work to be included
% This \thanks is also used for the received by date info, but
% authors are not expected to provide this.
\author{Quoc Tran Dinh$^{*\dagger}$, Ion Necoara$^{\ddagger}$, Carlo Savorgnan$^{*}$ \and
Moritz Diehl\thanks{Department of Electrical Engineering (ESAT-SCD) and
Optimization in Engineering Center (OPTEC), K.U. Leuven, Kasteelpark Arenberg
10, B-3001 Leuven, Belgium ({\tt \{quoc.trandinh, carlo.savorgnan, moritz.diehl\}@esat.kuleuven.be}).
\newline{ $^{\dagger}$ Department of Mathematics-Mechanics-Informatics, Hanoi University of Science, Hanoi, Vietnam.}
\newline{ $^{\ddagger}$ Automation and Systems Engineering Department, University Politehnica Bucharest, 060042 Bucharest, Romania;
(\texttt{ion.necoara@upb.ro})} } }

\begin{document}
\maketitle

\begin{abstract}
In this paper, we propose an inexact perturbed  path-following algorithm in the framework of Lagrangian dual decomposition for solving large-scale structured
convex optimization  problems. Unlike the exact versions considered in literature, we allow one to solve the primal problem inexactly up to a given accuracy.
% This leads to inexact gradient and Hessian information in the numerical solution of the dual problem.
The inexact perturbed algorithm allows to use both approximate Hessian matrices and approximate gradient vectors to  compute Newton-type directions for the dual
problem. The algorithm is divided into two phases. The first phase computes an initial point which makes use of inexact perturbed damped Newton-type iterations,
while the second one performs the path-following algorithm with inexact perturbed full-step Newton-type iterations. We analyze the convergence of both phases
and estimate the worst-case complexity.
As a special case, an exact path-following algorithm for Lagrangian relaxation is derived and its worst-case complexity is estimated.
This variant possesses some differences compared to the previously known methods. Implementation details are discussed and numerical results are reported.
\end{abstract}

\begin{keywords}
Smoothing technique,  self-concordant barrier, Lagrangian decomposition, inexact perturbed Newton-type method, separable convex optimization, parallel
algorithm.
\end{keywords}

\pagestyle{myheadings}
\thispagestyle{plain}
\markboth{Q. Tran Dinh, I. Necoara, C. Savorgnan and M. Diehl}{An Inexact Perturbed Path-Following Method for Lagrangian Decomposition}

%////////////////////////////////////////////////////////////////////////////////////////////////////////////////////////////////////
%+1. Introduction
%////////////////////////////////////////////////////////////////////////////////////////////////////////////////////////////////////
\section{Introduction}\label{sec:intro}
Many optimization problems arising in networked systems, image processing, data mining,  economics, distributed model predictive control and multi-stage
stochastic optimization can be formulated as a separable convex optimization problem, see, e.g.
\cite{Conejo2006,Komodakis2010,Naveen2007,Necoara2009,Samar2007,Venkat2008,Xiao2004}.
If the optimization problem has moderate size or possesses sparsity structure, then it can  be solved  efficiently by standard optimization methods. In many
practical situations, we can encounter problems which may not be easy to solve by standard optimization algorithms due to the high dimensionality or the
distributed locations of the data and devices. However, many problems can be reformulated as separable convex optimization problems such that the subproblems
generated from their components can be solved in a closed form or more easier than the full problem.

In this paper, we are interested in the following convex separable optimization problem:
\begin{equation}\label{eq:s1_main_CP}
\left\{\begin{array}{cl}
\displaystyle\max_{x\in\mathbb{R}^{n}} &\Big\{ \phi(x) := \displaystyle\sum_{i=1}^M\phi_i(x_{i})\Big\} \\
\textrm{s.t.}~ &x_{i} \in X_{i},~ (i=1,\cdots, M),\\
& \displaystyle\sum_{i=1}^MA_{i}x_{i} = b,
\end{array}\right.
\end{equation}
where $x = (x_{1}^T, \dots, x_{M}^T)^T$ with $x_{i}\in\mathbb{R}^{n_i}$ is a vector of  decision variables, $\phi_i:\mathbb{R}^{n_i} \to \mathbb{R}$ is concave,
$X_{i}$ is a nonempty, closed convex subset in $\mathbb{R}^{n_i}$, $A_{i}\in\mathbb{R}^{m\times n_i}$, $b\in\mathbb{R}^m$ for all $i=1,\dots, M$, and $n_1+n_2
+\cdots + n_M = n$. The last constraint is usually referred to as a \textit{coupling linear constraint}.
Problems of the form \eqref{eq:s1_main_CP} were considered in many research papers, see, e.g. \cite{Bertsekas1989,Necoara2008,Necoara2009,Quoc2011c}. Note that
coupling linear inequality constraints of the form $\sum_{i=1}^MB_{i}x_{i} \leq d$ can also be formulated into \eqref{eq:s1_main_CP} by using slack variables,
see, e.g. \cite{Necoara2009}.

Several methods solve problem \eqref{eq:s1_main_CP} by decomposing it into small subproblems that can be solved separately by standard
optimization techniques. For instance, by applying Lagrangian relaxation, the coupling constraint can be brought into the objective function and, by the
separability, we can decompose the dual function into small subproblems \cite{Bertsekas1989}. However, using such a Lagrangian relaxation technique generally
leads to a nonsmooth optimization problem. There are several attempts to overcome this difficulty by smoothing the dual function. One can add an augmented
Lagrangian term or a proximal term to the objective function of the problem. Unfortunately, the first approach breaks the separability of the original problem
due to the cross terms between the components. Therefore, the second approach is more suitable for this type of problems.

Recently,  smoothing techniques in convex optimization have attracted increasing interest and found many applications \cite{Nesterov2005a}. In the framework of
the Lagrangian dual decomposition, there are two popular approaches. The first approach is regularization. By adding a regularization term as a proximal
term to the objective function, the primal subproblem becomes strongly convex. Consequently, the master dual problem is smooth which allows one to apply
smoothing optimization techniques \cite{Bertsekas2010,Chen1994,Necoara2008,Quoc2011c}. The second approach is using barrier functions, this technique is
suitable for problems with conic constraints \cite{Fukuda2002,Kojima1993,Mehrotra2009,Necoara2009,Shida2008,Zhao1999,Zhao2001,Zhao2005}.
Several methods in this direction are based on the fact that, by using a self-concordant log-barrier function, the family of the dual functions which depend on
a barrier parameter is strongly self-concordant in the sense of Nesterov and Nemirovski \cite{Nesterov1994} under certain assumptions. Consequently,
path-following methods can be used to solve the master dual problem.
Note that this technique is only applicable to the cases where either the objective function is linear, quadratic and self-concordant or the problem is
compatible in the sense that it possesses a property that makes the smooth objective function of the dual self-concordant. Several methods in this direction
require a crucial assumption that the primal subproblems are solved exactly.
In practice, solving exactly the primal subproblems to compute the dual function is only conceptual. Any numerical optimization method
provides an approximate solution and, consequently, the dual function is also approximated. This paper studies an inexact perturbed path-following method in the
framework of Lagrangian decomposition for solving \eqref{eq:s1_main_CP}.

%+ b. Contribution.
\paragraph{Contribution}
The contribution of this paper is fivefold.
\begin{enumerate}
\item By applying smoothing technique via self-concordant barrier functions, we provide a local and a global smooth approximation to the dual function and
estimate the approximation error.
\item A new inexact perturbed path-following decomposition algorithm is proposed for solving \eqref{eq:s1_main_CP}. The algorithm consists of two phases. Both
phases allow the primal subproblems to be solved approximately. Moreover, the algorithm is highly parallelizable.
\item The convergence theory is investigated under standard assumptions used in any interior point method and the worst-case complexity is estimated.
\item When the primal problem is assumed to be solved exactly, our method reduces to the path-following method for Lagrangian decomposition considered in
\cite{Mehrotra2009,Necoara2009,Shida2008,Zhao2005}. However, the variants presented in this papers possesses a larger neighborhood of the analytic center where
convergence is guaranteed.
\item The implementation details are discussed and numerical experiments are implemented to confirm the theoretical development.
\end{enumerate}
Let us emphasize some difference between the method presented in this paper and the previously known methods.
\begin{enumerate}
\item Even though smoothing techniques based on self-concordant barriers are not new, in this paper we do not only apply smoothing techniques to the dual
problem but also provide some properties of the smooth function.
The smooth approximation of the dual function only requires that the objective function is convex (not necessarily smooth).
However, the dual function is smooth, which allows us to use any smooth optimization technique such as gradient-based methods or sequential quadratic
programming-based (SQP) methods to solve the master problem.

\item The new algorithm allows us to solve the primal subproblems inexactly where we can control the accuracy up to $\delta^{*}\approx 0.043286$ (see Section
\ref{sec:inexact_perturbed_path_following_alg} for more details) such that at the early steps of the path-following algorithm, they can be solved very
inexactly. This point is significant if the primal subproblems require high computational cost.
Note that the algorithm developed in this paper is different from the one considered in \cite{Zhao1999} for linear programming, where the inexactness of the
primal subproblems is defined in a different way.

\item Based on a recent monograph \cite{Nesterov2004}, we directly analyze the convergence of the algorithm. This makes our theory self-contained. Moreover, it
also allows us to optimally choose the parameters and to trade-off between the convergence rate of the master problem and the accuracy of the primal
subproblems.

\item In the exact case, the variant in this paper still has some advantages compared with the previous ones. Firstly, the radius of the neighborhood of the
analytic center is $(3-\sqrt{5})/2 \approx 0.38197$ which is larger than $2-\sqrt{3} \approx 0.26795$ of previous methods. Secondly, since the performance of
an interior point algorithm crucially depends on the parameters of the algorithm, we analyze directly the path-following iteration to select these parameters in
an optimal way.
\end{enumerate}

The rest of this paper is organized as follows. In the next section, we briefly describe the Lagrangian dual decomposition method applied to separable convex
optimization. Section \ref{sec:smoothing_technique} deals with a smoothing technique for the dual function via self-concordant barriers and investigates the
main properties of the smooth dual function. Section \ref{sec:inexact_perturbed_path_following_alg} presents an inexact perturbed path-following decomposition
algorithm. The convergence of the algorithm is analyzed and the worst-case complexity is estimated. Section \ref{sec:path_following_for_exact_case} considers
an exact variant of the algorithm presented in Section \ref{sec:inexact_perturbed_path_following_alg}. Section \ref{sec:implementation_detail} discusses
implementation details of the algorithms. Section \ref{sec:num_results} shows numerical tests and a comparison. Concluding remarks are included in the last
section.  The proofs of the technical statements are given in the appendix.

%+ Notation.
\paragraph{Notation and Terminology}
Throughout the paper, we  shall work on the Euclidean space $\mathbb{R}^n$ endowed with an inner product $x^Ty$ for $x, y\in\mathbb{R}^n$ and the Euclidian norm
$\norm{x} = \sqrt{x^Tx}$. The notation $x = (x_1, \dots, x_M)$ defines a vector in $\mathbb{R}^n$ formed from $M$ sub-vectors $x_i\in\mathbb{R}^{n_i}$,
$i=1,\dots, M$, where $n_1+ \cdots + n_M = n$.

For a proper, lower semi-continuous  convex function $f$, the notation $\textrm{dom}(f)$ denotes the domain of $f$, $\overline{\textrm{dom}}(f)$ is the closure
of $\textrm{dom}(f)$ and $\partial f(x)$ denotes the subdifferential of $f$ at $x$. For a concave function $f$ we also denote by $\partial{f(x)}$ as the
``super-differential'' of $f$ at $x$, where $\partial{f(x)}  := -\partial\{-f(x)\}$ .
Let $f$ be twice continuously differentiable and convex on $\mathbb{R}^n$.
For a given vector $u$, the local norm of $u$ with respect to $f$ at $x$, where $\nabla^2f(x)$ is positive definite, is defined as $\norm{u}_x :=
\left[u^T\nabla^2f(x)u\right]^{1/2}$ and its dual norm is $\norm{u}_x^{*} := \max\{u^Tv ~|~ \norm{v}_x \leq 1 \} = \left[u^T\nabla^2f(x)^{-1}u\right]^{1/2}$.
Clearly, $\abs{u^Tv} \leq \norm{u}_x\norm{v}_x^{*}$. Let $F$ be a standard self-concordant function, $W^0(x,r) := \{z\in\mathbb{R}^n ~|~ \norm{z-x}_x < r\}$
defines the \textit{Dikin} ellipsoid of $F$ at $x$, where $\norm{z-x}_x = [(z-x)^T\nabla^2F(x)(z-x)]^{1/2}$.

For a given symmetric matrix $P$ in $\mathbb{R}^{n\times n}$, the expression $P\succeq 0$ (resp. $P\succ 0$) means  that $P$ is positive semi-definite (resp.
positive definite); $P \succeq Q$ and $P\preceq Q$ (resp. $P\succ Q$ and $P\prec Q$) mean that $P-Q$ and $Q-P$ are positive semidefinite (resp. positive
definite), respectively.

The notation $\mathbb{R}_{+}$ and $\mathbb{R}_{++}$ define the set of non-negative and positive numbers, respectively. The function $\omega:\mathbb{R}_{+}\to
\mathbb{R}$ is  defined by $\omega(t) := t - \ln(1+t)$ and its dual  $\omega_{*} : [0, 1]\to\mathbb{R}$ is defined by $\omega_{*}(t) := -t - \ln(1-t)$. Note
that both functions are convex, nonnegative and increasing. For a real number $x$, $\lfloor{x}\rfloor$ denotes the largest integer number which is less than or
equal to $x$.

%////////////////////////////////////////////////////////////////////////////////////////////////////////////////////////////////////
%+ 2. Lagrangian dual relaxation in convex programming.
%////////////////////////////////////////////////////////////////////////////////////////////////////////////////////////////////////
\section{Lagrangian dual relaxation in convex optimization}\label{sec:lagrangian_decomposition}
A classical technique to address coupling constraints in separable convex optimization is based on  Lagrangian relaxation \cite{Bertsekas1989}. We briefly
review such a technique in this section.

Without loss of generality we consider problem \eqref{eq:s1_main_CP} with $M=2$. The separable convex optimization problem \eqref{eq:s1_main_CP}, with $M=2$,
can be expressed as:
\begin{equation}\label{eq:s2_sep_CP2}
\phi^{*} := \left\{\begin{array}{cl}
\displaystyle\max_{x:=(x_1, x_2)} \!\!\!\! & \left\{ \phi(x):=\phi_1(x_{1}) + \phi_2(x_{2}) \right\}\\
\textrm{s.t.} &A_{1}x_{1} + A_{2}x_{2} = b,\\
&x \in X := X_{1}\times X_{2}.
\end{array}\right.
\end{equation}
Let us define $A := [A_{1}, A_{2}]$ and $n := n_1 + n_2$. The linear coupling constraint $A_{1}x_{1} + A_{2}x_{2} = b$ can be written as $Ax = b$. The Lagrange
function for problem \eqref{eq:s2_sep_CP2} with respect to the coupling constraint $A_{1}x_{1} + A_{2}x_{2} = b$ is defined as:
\begin{equation}\label{eq:s2_Lagrange_funtion}
L(x, y) := \phi(x) + y^T(Ax-b) = \phi_1(x_{1}) + \phi_2(x_{2}) + y^T(A_{1}x_{1} + A_{2}x_{2} - b), \nonumber
\end{equation}
where $y \in\mathbb{R}^m$ is the Lagrange multiplier associated with the coupling constraint. A pair $(x^{*}_0, y^{*}_0) \in X \times \mathbb{R}^m$ is called a
saddle point of $L$ if
\begin{equation}\label{eq:saddle_point}
L(x, y^{*}_0) \leq L(x^{*}_0, y^{*}_0) \leq L(x^{*}_0, y), ~ \forall x\in X, ~\forall y\in\mathbb{R}^m. \nonumber
\end{equation}
The dual problem of \eqref{eq:s2_sep_CP2} is
\begin{equation}\label{eq:s2_original_dual_prob}
d_0^{*} := \min_{y\in\mathbb{R}^m} d_0(y),
\end{equation}
where $d_0$ is the dual function which is defined as
\begin{equation}\label{eq:s2_original_dual_func}
d_0(y) := \max_{x\in X}\left\{ \phi_1(x_{1}) + \phi_2(x_{2}) + y^T(A_{1}x_{1}+A_{2}x_{2} - b) \right\}.
\end{equation}
If \textit{strong duality} holds at $(x^{*}_0, y^{*}_0)$ with $x^{*}_0 := (x_{0,1}^{*}, x_{0,2}^{*})\in X$ and $y^{*}_0\in\mathbb{R}^m$,  then we have
\cite{Boyd2004}:
\begin{equation}\label{eq:s2_minmax_prob}
d^{*}_0 = d_0(y^{*}_0)  = \min_{y\in\mathbb{R}^m}d_0(y) = \max_{x\in X}\left\{ \phi(x) ~|~ Ax = b\right\} = \phi(x^{*}_0) = \phi^{*}. \nonumber
\end{equation}
Let us denote by $X^{*}$ the solution set of \eqref{eq:s2_sep_CP2} and by $Y^{*}$ the solution set of the dual  problem \eqref{eq:s2_original_dual_prob}. It is
well-known that if either the Slater condition holds, i.e. $\textrm{ri}(X)\cap \{x\in\mathbb{R}^n ~|~ Ax = b\} \neq \emptyset$, where $\textrm{ri}(X)$ is the
relative interior of the convex set $X$, or $X$ is polyhedral, then $Y^{*}$ is bounded \cite{Boyd2004}.

Finally, it is important to notice that the dual function $d_0(\cdot)$ can be computed separately  by 
\begin{eqnarray} \label{eq:s2_represented_orig_dual_func}
&&d_0(y) = d_{0,1}(y) + d_{0,2}(y) - b^Ty, \nonumber\\
[-1ex]
\mathrm{where}~~ && \\
[-1ex]
&&  d_{0,i}(y) := \displaystyle\max_{x_{i}\in X_{i}}\left\{\phi_i(x_{i}) + y^TA_{i}x_{i} \right\},~ i=1, 2. \nonumber
\end{eqnarray}
Let $x_{0,i}^{*}(y)$ be a solution of the maximization problem in \eqref{eq:s2_original_dual_func} ($i=1,2$),  and $x^{*}_0(y) := (x_{0,1}^{*}(y),
x_{0,2}^{*}(y))$. Lagrangian relaxation generally leads to a nonsmooth optimization problem in the dual form. Consequently, numerical solution to the dual
problem encounters many drawbacks.

%////////////////////////////////////////////////////////////////////////////////////////////////////////////////////////////////////
%+ 3. Smoothing technique via self-concordant barrier functions.
%////////////////////////////////////////////////////////////////////////////////////////////////////////////////////////////////////
\section{Smoothing technique via self-concordant barriers}\label{sec:smoothing_technique}
Let us assume that the feasible set $X_{i}$ is convex, has nonempty interiors and possesses a $\nu_i$-self-concordant barrier $F_i$ for $i=1,2$. Theory
of self-concordant functions and self-concordant barriers can be found in \cite{Hertog1992,Nesterov1994,Nesterov2004}.
Throughout the paper, we use the following assumptions.

%+ Assumption A.1.
\begin{assumption}\label{as:A1}
\begin{itemize}
\item[]$\mathrm{(a)}$ The solution set $X^{*}$ of \eqref{eq:s2_sep_CP2} is nonempty. Either the Slater condition for \eqref{eq:s2_sep_CP2} is satisfied or $X$
is polyhedral.
\item[]$\mathrm{(b)}$ The feasible set $X_{i}$ is bounded in $\mathbb{R}^{n_i}$ with $\int(X_i)\neq\emptyset$ and possesses a self-concordant barrier $F_i$ with
parameter $\nu_i$ for $i=1,2$.
\item[]$\mathrm{(c)}$ The function $\phi_i$ is proper, upper semicontinuous and concave on $X_i$ for $i=1,2$.
\item[]$\mathrm{(d)}$ The matrix $A$ is \textit{full-row rank}.
\end{itemize}
\end{assumption}
Note that Assumptions \aref{as:A1}.a) and \aref{as:A1}.c) are standard in convex optimization, which guarantee the solvability of the problem and strong
duality.
Assumption \aref{as:A1}.b) can be satisfied by assuming that the set of the sample points generated by such an optimization algorithm is bounded.
Assumption \aref{as:A1}.d) is not restrictive since it can be satisfied by applying standard linear algebra techniques to eliminate redundant constraints.

%+ Remark 3.1.
\begin{remark}\label{re:s3_remark_on_assumption_A1}
As we can see in Section \ref{sec:implementation_detail}, the convex feasible set $X_{i}$ can be given as follows
\begin{equation}\label{eq:X_i}
X_{i} := X^c_{i} \cap X_{i}^a, ~~ X_{i}^a := \left\{x_{i}\in\mathbb{R}^{n_i} ~|~ E_{i}x_{i} = f_{i}\right\}, \nonumber
\end{equation}
where $\int(X_{i}^c)$ is nonempty and $X_i^c$ possesses a $\nu_i$-self-concordant barrier $F_i$.
Let $E = [E_{1}, E_{2}]$ be a matrix formed from $E_{i}$ and $A/E$ be a reduced form of $\begin{bmatrix}A\\ E\end{bmatrix}$ and $\int(X_{i}) :=
\int(X_{i}) \cap X_{i}^a$ for $i=1,2$.
In this case, the theory developed in the next sections can be extended to the problem with this constraint, see, e.g. \cite{Necoara2009}.
\end{remark}
%+ End of Remark.

Let us denote by $x_{i}^c$ the \textit{analytic center} of $X_{i}$,
which is defined as:
\begin{equation*}\label{eq:s3_analytic_center}
x_{i}^c := \textrm{arg}\!\!\!\!\!\!\min_{x_{i}\in \textrm{ri}(X_{i})}F_i(x_{i}), ~i=1,2.
\end{equation*}
Under Assumption A.\ref{as:A1}.b), $x^c := (x^c_{1}, x^c_{2})$ is well-defined due to \cite[Corollary 2.3.6]{Renegar2001}.
To compute $x^c$, one can apply the algorithms proposed in \cite[pp. 204--205]{Nesterov2004}.
%Since each convex set $X_{i}$ possesses a barrier function $F_i$, $x_{i}^c$ can be computed \textit{in parallel} for $i=1,2$.
Moreover, the following estimates hold:
\begin{eqnarray}\label{eq:s3_analytic_center_est}
F_i(x_{i}) - F_i(x_{i}^c) \geq \omega(\norm{x_{i}-x_{i}^c}_{x_{i}^c}) ~\mathrm{and} ~\norm{x_{i} - x_{i}^c}_{x_{i}^c} \leq \nu_i + 2\sqrt{\nu_i},
\end{eqnarray}
for all $x_{i} \in \overline{\mathrm{dom}}(F_i)$ and $i=1,2$ \cite[Theorems 4.1.13 and 4.2.6]{Nesterov2004}.

%***********************************************************************
%+ 3.1. Bounds on the smooth dual function.
%***********************************************************************
\subsection{A smooth approximation of the dual function}\label{subsec:bounds_on_dual_function}
Similarly to \cite{Kojima1993,Necoara2009,Shida2008,Zhao2005}, we construct a smooth approximation of the  nonsmooth dual function $d_0$ defined by
\eqref{eq:s2_original_dual_func} via self-concordant barriers.

Let us define the following functions:
\begin{eqnarray}\label{eq:s3_smoothed_dual_func}
&&d_i(y, t) := \!\!\! \max_{x_{i}\in\textrm{int}(X_{i})} \!\left\{ \phi_i(x_{i}) \!+\! y^TA_{i}x_{i} \!-\! t[F_i(x_{i}) \!-\! F_i(x^c_{i})]\right\}, ~i=1,2, \nonumber\\
[-1ex]
\textrm{and}~~ &&\\
[-1ex]
&& d(y, t) :=  d_1(y, t) + d_2(y, t) - b^Ty, \nonumber
\end{eqnarray}
where $t>0$ is referred to as a smoothness or barrier parameter. Note that, due to the strict concavity of the objective function, the maximization problem in
\eqref{eq:s3_smoothed_dual_func} has a unique solution, which is denoted by $x_{i}^{*}(y,t)$. Consequently, the functions $d_i(\cdot,t)$ ($i=1,2$) and
$d(\cdot,t)$ are well-defined and smooth on $\mathbb{R}^m$ for any $t > 0$. As in \cite{Zhao2005} we refer to $d$ as a \textit{smooth dual approximation}  of
$d_0$ and to the maximization problem in \eqref{eq:s3_smoothed_dual_func} as a \textit{primal subproblem}.

If we denote by $x^{*}(y,t) := (x^{*}_{1}(y,t), x^{*}_{2}(y,t))$, then we can write
\begin{equation*}
d(y,t) = \phi(x^{*}(y,t)) + y^T(Ax^{*}(y,t) - b) - t[F(x^{*}(y,t)) - F(x^c)].
\end{equation*}
The optimality condition for \eqref{eq:s3_smoothed_dual_func} is \begin{equation}\label{eq:s3_optimality_for_smoothed_dual_prob_i}
0 \in \partial\phi_i(x_{i}^{*}(y,t)) + A_{i}^Ty - t\nabla F_i(x_{i}^{*}(y,t)), ~i=1,2,
\end{equation}
where $\partial\phi_i(x_{i}^{*}(y,t))$ is the super-differential of $\phi_i$ at $x_{i}^{*}(y,t)$ ($i = 1,2$).
Since problem \eqref{eq:s3_smoothed_dual_func} is convex, this condition is necessary and sufficient.

Associated with the smooth dual function $d(\cdot, t)$, we consider the following \textit{master problem}:
\begin{eqnarray}\label{eq:s3_smoothed_dual_prob}
d^{*}(t) := \min_{y\in Y} d(y, t).
\end{eqnarray}
We denote by $y^{*}(t)$ a solution of \eqref{eq:s3_smoothed_dual_prob} if it exists and by $x^{*}(t) := x^{*}(y^{*}(t), t)$.

For a given $\beta \in (0, 1)$, we define a neighbourhood in $\mathbb{R}^m$ with respect to $F_i$ and $t > 0$ as
\begin{equation*}
\mathcal{N}^{F_i}_t(\beta) := \left\{ y\in \mathbb{R}^m ~|~ \lambda_{F_i}(x^{*}_{i}(y,t)) := \norm{\nabla F_i(x^{*}_{i}(y,t))}_{x^{*}_{i}(y,t)}^{*} \leq \beta \right\}.
\end{equation*}

The following lemma provides a local estimate for $d_0(\cdot)$, whose proof can be found in the appendix.
%+ Lemma 3.1.a.
\begin{lemma}\label{le:s3_local_bound_on_dual_func}
Under Assumption A.\ref{as:A1} and $\beta\in (0, 1)$,  the function
$d(\cdot, t)$ defined by \eqref{eq:s3_smoothed_dual_func} satisfies:
\begin{eqnarray}\label{eq:s3_local_bound_on_dual_func}
0 \!\leq\! t\!\left[\!\sum_{i=1}^2\omega\!\!\left(\norm{x^{*}_{i}(y,t) \!-\! x^c_{i}}_{x^c_{i}}\!\right) \!\right] \!\leq\! d_0(y) \!-\! d(y,t) \!\leq\!
t\sum_{i=1}^2\!\left[\omega_{*}(\lambda_{F_i}(x^{*}_{i}(y,t))) \!+\! \nu_i\right],
\end{eqnarray}
for all $y\in \mathcal{N}^{F_1}_t(\beta)\cap \mathcal{N}^{F_2}_t(\beta)$.
\end{lemma}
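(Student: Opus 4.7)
The plan is to split the quantity $d_0(y)-d(y,t)$ into its two block-wise pieces and bound each piece separately from above and below. Since $d_0(y)-d(y,t)=\sum_{i=1}^2 [d_{0,i}(y)-d_i(y,t)]$ (the terms $-b^Ty$ cancel), it suffices to work on each index $i$.

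For the lower bound, I would use $x_i^{*}(y,t)\in\int(X_i)$ as a feasible candidate in the unperturbed dual problem defining $d_{0,i}$. This yields
\begin{equation*}
d_{0,i}(y) \geq \phi_i(x_i^{*}(y,t)) + y^T A_i x_i^{*}(y,t),
\end{equation*}
while by definition $d_i(y,t) = \phi_i(x_i^{*}(y,t)) + y^TA_i x_i^{*}(y,t) - t[F_i(x_i^{*}(y,t))-F_i(x_i^c)]$. Subtracting gives $d_{0,i}(y)-d_i(y,t)\geq t[F_i(x_i^{*}(y,t))-F_i(x_i^c)]$, and the lower estimate in \eqref{eq:s3_analytic_center_est} converts this to $t\omega(\|x_i^{*}(y,t)-x_i^c\|_{x_i^c})$. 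Summing over $i$ provides the lower bound; in particular $d(y,t)\leq d_0(y)$.

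For the upper bound I would exploit the optimality condition \eqref{eq:s3_optimality_for_smoothed_dual_prob_i}: there exists $g_i\in\partial\phi_i(x_i^{*}(y,t))$ with $g_i = t\nabla F_i(x_i^{*}(y,t))-A_i^Ty$. By concavity of $\phi_i$, for every $x_i\in X_i$,
\begin{equation*}
\phi_i(x_i)+y^TA_ix_i \leq \phi_i(x_i^{*}(y,t))+y^TA_ix_i^{*}(y,t) + t\nabla F_i(x_i^{*}(y,t))^T(x_i-x_i^{*}(y,t)).
\end{equation*}
Maximizing the left-hand side over $x_i\in X_i$ gives $d_{0,i}(y)$, and the last term on the right is bounded above by $t\nu_i$ using the standard self-concordant-barrier inequality $\nabla F_i(x)^T(z-x)\leq \nu_i$ valid for all $z\in\overline{\dom}(F_i)$. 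Rearranging,
\begin{equation*}
d_{0,i}(y) - d_i(y,t) \leq t\nu_i + t[F_i(x_i^{*}(y,t)) - F_i(x_i^c)].
\end{equation*}

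The last missing ingredient is to control $F_i(x_i^{*}(y,t))-F_i(x_i^c)$ from above in terms of the Newton decrement $\lambda_{F_i}(x_i^{*}(y,t))$. This is where the hypothesis $y\in\mathcal{N}^{F_1}_t(\beta)\cap\mathcal{N}^{F_2}_t(\beta)$ with $\beta<1$ matters: since $F_i$ is a standard self-concordant function with minimizer $x_i^c$, the classical bound (Theorem 4.1.13 of \cite{Nesterov2004}) gives $F_i(x)-F_i(x_i^c)\leq \omega_{*}(\lambda_{F_i}(x))$ whenever $\lambda_{F_i}(x)<1$. Applying this at $x=x_i^{*}(y,t)$ and summing over $i$ yields the stated upper bound. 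The main obstacle is the second step: correctly invoking the self-concordant machinery both for the ``gradient inequality'' $\nabla F_i(x)^T(z-x)\leq\nu_i$ and for the Bregman-type estimate $F_i(x)-F_i(x_i^c)\leq\omega_{*}(\lambda_{F_i}(x))$, since otherwise one easily gets a weaker bound involving $\nu_i+\sqrt{\nu_i}$ rather than the clean decomposition $\nu_i+\omega_{*}(\lambda_{F_i})$ appearing in the lemma.
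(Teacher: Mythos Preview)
Your argument is correct, and for the upper bound it follows a genuinely different (and somewhat more direct) route than the paper's. The lower bound and the final ingredient $F_i(x_i^{*}(y,t))-F_i(x_i^c)\leq\omega_{*}(\lambda_{F_i}(x_i^{*}(y,t)))$ are handled the same way in both.

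For the upper bound, the paper does \emph{not} use the optimality condition and the barrier inequality $\nabla F_i(x)^T(z-x)\leq\nu_i$. Instead, it parametrizes the segment $x_i(\alpha):=x_i^{*}(y,t)+\alpha(x_{0,i}^{*}(y)-x_i^{*}(y,t))$, invokes the self-concordant barrier bound $F_i(x_i(\alpha))\leq F_i(x_i^{*}(y,t))-\nu_i\ln(1-\alpha)$ from \cite{Nesterov1994}, combines this with concavity of $\phi_i$, and then lets $\alpha\to 0^{+}$ using $\ln(1-\alpha)/\alpha\to -1$. That limiting procedure is what produces the additive $\nu_i$ in the paper. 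Your approach extracts the same $\nu_i$ directly from the gradient inequality of the barrier and avoids the parametrization/limit machinery, at the modest price of requiring the first-order optimality condition \eqref{eq:s3_optimality_for_smoothed_dual_prob_i} to identify a supergradient of $\phi_i$. Both proofs rely on $\lambda_{F_i}(x_i^{*}(y,t))<1$ only through the estimate $F_i(x_i^{*}(y,t))-F_i(x_i^c)\leq\omega_{*}(\lambda_{F_i}(x_i^{*}(y,t)))$, so the role of the neighbourhood assumption is identical.
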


From Lemma \ref{le:s3_local_bound_on_dual_func}, we see that
\begin{equation*}
0 \leq d_0(y) - d(y,t) \leq t[2\omega_{*}(\beta) + \nu_1+\nu_2], ~~\forall y \in \mathcal{N}^{F_1}_t(\beta)\cap \mathcal{N}^{F_2}_t(\beta).
\end{equation*}
Hence, for $t = t_f > 0$ sufficiently small, $d(\cdot, t_f)$ is a local approximation to $d_0(\cdot)$.

Under Assumption \aref{as:A1}, the dual optimal solution set $Y^{*}$ is bounded.
Without loss of generality, we can assume that $Y$ is bounded such that $Y^{*}\subset Y$.
Let
\begin{equation*}\label{eq:s3_dc_definition}
d^c(y) := \phi(x^c) + y^T(Ax^c - b) ,
\end{equation*}
where $x^c$ is the analytic center of $X$. From \eqref{eq:s2_original_dual_func} we have:
\begin{eqnarray}\label{eq:s3_diff_of_orig_dual_func_and_dc}
d_0(y) - d^c(y) &&= \max_{x\in X}\left\{\phi(x) + y^T(Ax-b)\right\} - \left[\phi(x^c) + y^T(Ax^c-b)\right] \geq 0, ~\forall y\in Y. \nonumber
\end{eqnarray}
Furthermore,
\begin{align}\label{eq:s3_upper_bound_of_dy_minus_dc}
0 \leq d_0(y) - d^c(y) &= \max_{x\in X}\{\phi(x) - \phi(x^c) + y^TA(x-x^c)\}\nonumber \\
& \overset{\tiny\phi~\mathrm{is~concave}}{\leq} \sum_{i=1}^2\max_{x_{i}\in X_{i}}\left\{\max_{\xi_{i}\in\partial\phi_i(x^c_{i})}\left\{\left[\xi_{i} + A_{i}^Ty\right]^T(x_{i} \!-\!
x^c_{i})\right\}\right\} \nonumber\\
&\leq \sum_{i=1}^2\max_{x_{i}\in X_{i}}\left\{\max_{\xi_{i}\in\partial\phi_i(x^c_{i})}\left\{\norm{\xi_{i} + A_{i}^Ty}_{x^c_{i}}^{*}\norm{x_{i} \!-\!
x^c_{i}}_{x^c_{i}}\right\}\right\} \\
& \overset{\tiny\eqref{eq:s3_analytic_center_est}}{\leq} \sum_{i=1}^2(\nu_i + 2\sqrt{\nu_i})\max_{\xi_{i}\in\partial\phi_i(x^c_{i})}\left\{ \norm{\xi_{i} + A_{i}^Ty}_{x^c_{i}}^{*}
\right\} \nonumber\\
& \leq K_1 + K_2 < +\infty, ~\forall y\in Y, \nonumber
\end{align}
where $K_i := (\nu_i + 2\sqrt{\nu_i})\max_{\xi_{i}\in\partial\phi_i(x^c_{i})}\big\{ \norm{\xi_{i} + A_{i}^Ty}_{x^c_{i}}^{*}
\big\}$ ($i=1,2$).
The following lemma shows that $d(\cdot, t)$ is a global approximation to $d_0(\cdot)$. The proof can be found in the appendix.

%+ Lemma 3.1.b.
\begin{lemma}\label{le:s3_bound_of_smoothed_dual_func_to_orig_dual_func}
Suppose that Assumption A.\ref{as:A1} is satisfied. Then, for any $t > 0$ and $y\in Y$, the following estimate holds:
\begin{eqnarray}\label{eq:s3_bound_of_smoothed_dual_func_to_orig_dual_func}
0 \!\leq\! t\!\sum_{i=1}^2\!\omega(\norm{x^{*}_{i}(y,t) \!-\! x^c_{i}}_{x^c_{i}}) \!\leq\! d_0(y) \!-\! d(y,t) \leq t[\bar{\zeta}(K_1;\nu_1, t) \!+\! \bar{\zeta}(K_2;\nu_2, t)],
\end{eqnarray}
where $\bar{\zeta}(\tau; a, b) := a\left(1 + \max\left\{0, \ln\left(\frac{\tau}{ab}\right)\right\}\right)$ and $K_1$ and $K_2$ are two constants given in
\eqref{eq:s3_upper_bound_of_dy_minus_dc}.
\end{lemma}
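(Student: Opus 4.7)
The plan is to split \eqref{eq:s3_bound_of_smoothed_dual_func_to_orig_dual_func} into its lower- and upper-bound halves and, in both cases, to exploit the block separability $d_0(y)-d(y,t) = \sum_{i=1}^2[d_{0,i}(y)-d_i(y,t)]$ with $d_{0,i}$ and $d_i$ the per-block counterparts defined by \eqref{eq:s2_represented_orig_dual_func} and \eqref{eq:s3_smoothed_dual_func}. The lower half is a short computation, while the upper half is driven by a convex-combination argument and a logarithmic barrier-growth estimate.

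For the lower bound I would plug the feasible point $x_{i}^{*}(y,t)\in\int(X_{i})$ into the maximization defining $d_{0,i}(y)$ to obtain $d_{0,i}(y) \geq \phi_i(x_{i}^{*}(y,t)) + y^TA_{i}x_{i}^{*}(y,t)$, and subtract the identity $d_i(y,t) = \phi_i(x_{i}^{*}(y,t)) + y^TA_{i}x_{i}^{*}(y,t) - t[F_i(x_{i}^{*}(y,t))-F_i(x_{i}^c)]$. Invoking the first inequality of \eqref{eq:s3_analytic_center_est} then gives
\begin{equation*}
d_{0,i}(y)-d_i(y,t) \geq t\left[F_i(x_{i}^{*}(y,t))-F_i(x_{i}^c)\right] \geq t\,\omega(\norm{x_{i}^{*}(y,t)-x_{i}^c}_{x_{i}^c}),
\end{equation*}
and summing over $i=1,2$ (with $\omega\geq 0$) delivers the two leftmost inequalities of \eqref{eq:s3_bound_of_smoothed_dual_func_to_orig_dual_func}.

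For the upper bound, let $x_{0,i}^{*}(y)$ be an optimizer of $d_{0,i}(y)$ and, for $\alpha\in[0,1)$, consider the interior test point $x_{i,\alpha} := (1-\alpha)x_{i}^c + \alpha x_{0,i}^{*}(y)\in\int(X_{i})$. Using $x_{i,\alpha}$ as a candidate in \eqref{eq:s3_smoothed_dual_func}, together with the concavity of $\phi_i$, I would obtain
\begin{equation*}
d_{0,i}(y)-d_i(y,t) \leq (1-\alpha)\left[d_{0,i}(y)-d_i^c(y)\right] + t\left[F_i(x_{i,\alpha})-F_i(x_{i}^c)\right],
\end{equation*}
where $d_i^c(y) := \phi_i(x_{i}^c)+y^TA_{i}x_{i}^c$. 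The block-wise version of the chain leading to \eqref{eq:s3_upper_bound_of_dy_minus_dc} provides $d_{0,i}(y)-d_i^c(y)\leq K_i$, and the standard logarithmic growth bound for a $\nu_i$-self-concordant barrier along the segment from $x_{i}^c$ towards a point of $\overline{X_{i}}$ yields $F_i(x_{i,\alpha})-F_i(x_{i}^c) \leq \nu_i\ln(1/(1-\alpha))$. Substituting and minimizing the right-hand side over $\alpha\in[0,1)$ via $s:=1-\alpha$: the stationary point $s^{*}=t\nu_i/K_i$ is admissible exactly when $t\nu_i\leq K_i$, in which case the minimum equals $t\nu_i\bigl(1+\ln(K_i/(t\nu_i))\bigr) = t\bar\zeta(K_i;\nu_i,t)$, while if $t\nu_i>K_i$ the choice $\alpha=0$ gives $K_i\leq t\nu_i = t\bar\zeta(K_i;\nu_i,t)$. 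Summing the two blocks then produces the rightmost inequality.

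The only nontrivial analytic ingredient, and hence the main obstacle, is the logarithmic barrier-growth estimate $F_i(x_{i,\alpha})-F_i(x_{i}^c) \leq \nu_i\ln(1/(1-\alpha))$. I would justify it from the Minkowski-gauge characterization of self-concordant barriers in \cite{Nesterov2004}, observing that $x_{0,i}^{*}(y)\in\overline{X_{i}}$ forces the gauge of $x_{i,\alpha}-x_{i}^c$ with respect to $x_{i}^c$ to be at most $\alpha$; a simple limiting argument covers the edge case $x_{0,i}^{*}(y)\in\partial X_{i}$, since the bound is only required for $\alpha<1$.
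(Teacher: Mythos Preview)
Your proposal is correct and follows essentially the same route as the paper: both prove the lower bound by plugging $x_i^{*}(y,t)$ into the definition of $d_{0,i}$ and invoking \eqref{eq:s3_analytic_center_est}, and both prove the upper bound by testing the convex combination $(1-\alpha)x_i^c + \alpha x_{0,i}^{*}(y)$, applying the logarithmic barrier-growth estimate (cited in the paper as \cite[inequality (2.3.3)]{Nesterov1994}), and optimizing over $\alpha$. The only cosmetic difference is that the paper optimizes first to obtain a bound in terms of $d_{0,i}(y)-d_i^c(y)$ and then replaces this by $K_i$ via \eqref{eq:s3_upper_bound_of_dy_minus_dc}, whereas you bound by $K_i$ before optimizing; since $\ln$ is monotone, the two orders give the same result.
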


The proof of the following statement can also be found in the appendix.
%+ Lemma 3.1.c.
\begin{lemma}\label{le:s3_choice_of_para_t}
For a given tolerance $\varepsilon_d > 0$, if we choose $t > 0$ such that
\begin{equation}\label{eq:s3_choice_of_t}
0 \!<\! t \!\leq\! \bar{t} \!:=\! \min\!\left\{ \! \frac{K_1}{\nu_1}\kappa^{1/\kappa}\!,~ \frac{K_2}{\nu_2}\kappa^{1/\kappa}\!,~
\varepsilon_d^{1/(1-\kappa)}\left(\sum_{i=1}^2\nu_i +
(K_i/\nu_i)^{\kappa}\right)^{-1/(1-\kappa)}\!\!\right\},
\end{equation}
for fixed $\kappa\in (0,1)$, then it follows from Lemma \ref{le:s3_bound_of_smoothed_dual_func_to_orig_dual_func} that
\begin{equation*}
d(y, t) \leq d_0(y) \leq d(y, t) + \varepsilon_d.
\end{equation*}
In other words, if we fix $t_f\in (0, \bar{t})$ and  minimize $d(\cdot, t_f)$ over $Y$, then $y^{*}(t_f)$ is an $\varepsilon_d$-solution of
\eqref{eq:s2_original_dual_prob}.
\end{lemma}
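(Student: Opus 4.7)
The plan is to work directly from the global bound established in Lemma~\ref{le:s3_bound_of_smoothed_dual_func_to_orig_dual_func}, namely
\[
0\;\leq\;d_0(y)-d(y,t)\;\leq\;t\bigl[\bar\zeta(K_1;\nu_1,t)+\bar\zeta(K_2;\nu_2,t)\bigr],\qquad y\in Y,\ t>0,
\]
with $\bar\zeta(\tau;a,b)=a(1+\max\{0,\ln(\tau/(ab))\})$. The left inequality already gives $d(y,t)\le d_0(y)$, so only the upper inequality $d_0(y)\le d(y,t)+\varepsilon_d$ needs proof, which reduces to showing that the three components defining $\bar t$ in~\eqref{eq:s3_choice_of_t} force $t[\bar\zeta_1+\bar\zeta_2]\le\varepsilon_d$.

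First, the two side-conditions $t\le\kappa^{1/\kappa}K_i/\nu_i$ are equivalent to $K_i/(\nu_i t)\ge\kappa^{-1/\kappa}\ge 1$ (since $\kappa\in(0,1)$), so the inner $\max\{0,\cdot\}$ collapses to the non-negative logarithm and $\bar\zeta(K_i;\nu_i,t)=\nu_i(1+\ln(K_i/(\nu_i t)))$. Second, I would invoke the elementary calculus inequality
\[
1+\ln s\;\leq\;\frac{s^{\kappa}}{\kappa}\qquad (s\ge 1,\ \kappa\in(0,1]),
\]
which follows from $\ln s\le(s^{\kappa}-1)/\kappa$ combined with $\kappa\le 1$, and apply it with $s=K_i/(\nu_i t)\ge 1$. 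This converts the logarithmic growth into polynomial growth in $1/t$ and yields
\[
t\,\bar\zeta(K_i;\nu_i,t)\;\leq\;\tfrac{1}{\kappa}\,\nu_i(K_i/\nu_i)^{\kappa}\,t^{1-\kappa}.
\]
Summing over $i$ and consolidating the product form $\nu_i(K_i/\nu_i)^\kappa$ into the additive form $\nu_i+(K_i/\nu_i)^\kappa$ (using a Young-type splitting to absorb the residual factor $1/\kappa$), one obtains a bound of the shape $t[\bar\zeta_1+\bar\zeta_2]\le\bigl(\sum_i[\nu_i+(K_i/\nu_i)^{\kappa}]\bigr) t^{1-\kappa}$. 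The third component of $\bar t$ is designed precisely so that $t^{1-\kappa}\le\varepsilon_d\bigl(\sum_i[\nu_i+(K_i/\nu_i)^{\kappa}]\bigr)^{-1}$, which closes the estimate $t[\bar\zeta_1+\bar\zeta_2]\le\varepsilon_d$.

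Having proved $d(y,t_f)\le d_0(y)\le d(y,t_f)+\varepsilon_d$ for every $y\in Y$, the $\varepsilon_d$-optimality of $y^{*}(t_f)$ follows routinely: on the one hand $d(y^{*}(t_f),t_f)=\min_{Y}d(\cdot,t_f)\le d(y_0^{*},t_f)\le d_0(y_0^{*})=d_0^{*}$, and on the other $d_0(y^{*}(t_f))\le d(y^{*}(t_f),t_f)+\varepsilon_d$, so subtracting gives $d_0(y^{*}(t_f))-d_0^{*}\le\varepsilon_d$. The main obstacle is the calibration in the middle step: the two side-bounds $t\le\kappa^{1/\kappa}K_i/\nu_i$ are exactly what is needed to invoke the clean log-to-power inequality on its valid range $s\ge 1$, and the particular form of the third component of $\bar t$ encodes exactly the constants produced by that substitution; executing the consolidation of $\nu_i(K_i/\nu_i)^{\kappa}$ into $\nu_i+(K_i/\nu_i)^{\kappa}$ without introducing spurious multiplicative factors is the only delicate piece of bookkeeping.
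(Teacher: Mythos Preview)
Your consolidation step does not go through. After the log-to-power substitution you have $t\,\bar\zeta(K_i;\nu_i,t)\le\tfrac{1}{\kappa}\,\nu_i(K_i/\nu_i)^{\kappa}\,t^{1-\kappa}$, and you then assert that a ``Young-type splitting'' converts $\tfrac{1}{\kappa}\nu_i(K_i/\nu_i)^{\kappa}$ into $\nu_i+(K_i/\nu_i)^{\kappa}$. This is false in general: with $\nu_i=(K_i/\nu_i)^{\kappa}=10$ and $\kappa=\tfrac12$ the left side is $200$ and the right side is $20$. No Young inequality can rescue this, because you are trying to dominate a product $ab/\kappa$ by the sum $a+b$ with no size restriction on $a$ or $b$.

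The paper avoids the $1/\kappa$ factor from the start by using a different elementary inequality: $\ln s\le s^{\kappa}$, which holds precisely when $s\ge\kappa^{-1/\kappa}$. This is exactly what the two side-conditions $t\le(K_i/\nu_i)\kappa^{1/\kappa}$ enforce; with your inequality $1+\ln s\le s^{\kappa}/\kappa$, valid already for all $s\ge1$, the full strength of those conditions is never used, which should have been a warning sign. From $1+\ln s\le 1+s^{\kappa}$ one obtains directly $t\,\bar\zeta_i\le\nu_i t+\nu_i(K_i/\nu_i)^{\kappa}t^{1-\kappa}$, and then $t\le t^{1-\kappa}$ (for $t\le1$, which is automatic since $\nu_i\ge1$ forces the third component of $\bar t$ below $1$) combines both terms into $[\nu_i+\nu_i(K_i/\nu_i)^{\kappa}]t^{1-\kappa}$, after which the third component of $\bar t$ finishes the job. (The extra $\nu_i$ in the second summand is what the paper's argument actually produces; the statement's constant $\nu_i+(K_i/\nu_i)^{\kappa}$ appears to be a misprint.)
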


Since $d(\cdot, t)$ is continuously differentiable, smooth optimization techniques such as gradient-based or SQP-based methods can be applied to solve problem
\eqref{eq:s3_smoothed_dual_prob}. If we choose $t_f > 0$ sufficiently small, then according to Lemmas \ref{le:s3_local_bound_on_dual_func} and
\ref{le:s3_bound_of_smoothed_dual_func_to_orig_dual_func}, we can obtain an approximate solution of \eqref{eq:s2_original_dual_prob} with a desired accuracy.

%*********************************************************************
%+ 3.2. The self-concordancy of the smooth dual function.
%*********************************************************************
\subsection{The self-concordance of the smooth dual function}\label{subsec:selfconcordancy}
If the function $-\phi_i$ is self-concordant on $\textrm{dom}(-\phi_i)$ with parameter $M_{\phi_i}$, then the family of the functions $\phi_i(\cdot, t) :=
tF(\cdot)-\phi_i(\cdot)$ is also self-concordant on $\dom(-\phi_i)\cap\dom(F_i)$. Consequently, the smooth dual function $d(\cdot, t)$ is self-concordant as
stated in the following lemma. The proof of this lemma can be found, for instance, in  \cite{Mehrotra2009,Necoara2009,Shida2008,Zhao2005}.

%+ Lemma 3.3.
\begin{lemma}\label{le:s3_self_concordancy}
Suppose that Assumption A.\ref{as:A1} is satisfied. Suppose further that $-\phi_i$ is $M_{\phi_i}$-self-concordant. Then, the function $d_i(\cdot, t)$ defined
by \eqref{eq:s3_smoothed_dual_func} is self-concordant with parameter $M_{d_i} := \max\{M_{\phi_i}, \frac{2}{\sqrt{t}}\}$ for any $t>0$ and $i=1,2$.
Consequently, $d(\cdot, t)$ is self-concordant with parameter $M_{d} = \max\{M_{\phi_1}, M_{\phi_2}, \frac{2}{\sqrt{t}}\}$.
\end{lemma}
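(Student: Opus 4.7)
The plan is to view $d_i(\cdot,t)$ as, up to an irrelevant additive constant, the Legendre--Fenchel conjugate of the auxiliary convex function $\psi_i(\cdot,t) := tF_i(\cdot)-\phi_i(\cdot)$ precomposed with the linear map $y\mapsto A_i^Ty$, and then to track the self-concordance constant through three standard operations: scaling, summation, and Legendre conjugation. Rewriting \eqref{eq:s3_smoothed_dual_func}, one has
\begin{equation*}
d_i(y,t) \;=\; tF_i(x_{i}^c) + \sup_{x_{i}\in\int(X_{i})}\bigl\{(A_{i}^Ty)^T x_{i} - \psi_i(x_{i},t)\bigr\} \;=\; tF_i(x_{i}^c) + \psi_i^{*}(A_{i}^Ty,\,t),
\end{equation*}
where $\psi_i^{*}(\cdot,t)$ denotes the Fenchel conjugate.

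First I would check that $\psi_i(\cdot,t)$ is self-concordant on $\int(X_i)\cap\dom(-\phi_i)$ with the claimed constant. Since $F_i$ is a standard self-concordant (hence $2$-self-concordant) barrier, the scaling identity $f\mapsto \alpha f$ sends an $M$-self-concordant function to an $(M/\sqrt{\alpha})$-self-concordant function, so $tF_i$ is $(2/\sqrt{t})$-self-concordant. Combining this with the hypothesis that $-\phi_i$ is $M_{\phi_i}$-self-concordant and the additivity rule (a direct consequence of the elementary inequality $a^{3/2}+b^{3/2}\leq (a+b)^{3/2}$ for $a,b\geq 0$), one concludes that $\psi_i(\cdot,t)=tF_i-\phi_i$ is self-concordant with constant $\max\{M_{\phi_i},\,2/\sqrt{t}\}$.

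Next I would transfer this self-concordance to the dual side. Under Assumption \aref{as:A1}(b), $X_i$ is bounded with nonempty interior and $F_i$ is a barrier, so $\psi_i(\cdot,t)$ is strictly convex with boundary blow-up, i.e.\ strongly self-concordant in the sense of \cite{Nesterov1994}. The classical conjugation theorem (see \cite[Sec.~4.1]{Nesterov2004}) then implies that $\psi_i^{*}(\cdot,t)$ is self-concordant on $\mathbb{R}^{n_i}$ with the same constant $\max\{M_{\phi_i},\,2/\sqrt{t}\}$. Since pre-composition with the linear map $y\mapsto A_i^Ty$ does not worsen this constant (both sides of the self-concordance inequality transform by the same power of $\|A_i^Tv\|_{\nabla^2\psi_i^{*}}$) and the additive constant $tF_i(x_i^c)$ is irrelevant, this yields $M_{d_i}=\max\{M_{\phi_i},\,2/\sqrt{t}\}$. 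For the second claim, the decomposition $d(y,t)=d_1(y,t)+d_2(y,t)-b^Ty$ combined with one more application of the additivity rule (the affine term $-b^Ty$ being self-concordant with any constant) delivers $M_d=\max\{M_{\phi_1},M_{\phi_2},\,2/\sqrt{t}\}$.

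The main obstacle is the conjugation step: the scaling, summation, and linear-pre-composition rules all follow directly from the definition, but verifying that the Legendre transform preserves the self-concordance constant is a genuinely nontrivial calculation. Should a ready-to-cite reference be deemed unsatisfactory, the self-contained alternative is to apply the implicit function theorem to the optimality condition \eqref{eq:s3_optimality_for_smoothed_dual_prob_i}, obtaining
\begin{equation*}
\nabla d_i(y,t) = A_i x_i^{*}(y,t), \qquad \nabla^2 d_i(y,t) = A_i\bigl[\nabla^2\psi_i(x_i^{*}(y,t),t)\bigr]^{-1}A_i^T,
\end{equation*}
and then to bound the third directional derivative directly using the self-concordance of $\psi_i(\cdot,t)$ established in the first step; a short computation (essentially the one underpinning the conjugation theorem) reproduces the constant $\max\{M_{\phi_i},\,2/\sqrt{t}\}$, after which the second claim is again immediate from additivity.
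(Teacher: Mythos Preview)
Your proposal is correct and follows exactly the approach the paper has in mind: the paper does not supply its own proof but simply cites \cite{Mehrotra2009,Necoara2009,Shida2008,Zhao2005}, and the argument in those references is precisely the Legendre-conjugate route you outline (scale $F_i$ by $t$, add $-\phi_i$, conjugate, pre-compose with $A_i^T$, then sum). Your tracking of the constant through each operation is accurate, and your fallback of differentiating the optimality condition \eqref{eq:s3_optimality_for_smoothed_dual_prob_i} is in fact the computation that underlies the conjugation theorem itself, so either variant suffices.
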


Similar to standard path-following methods \cite{Nesterov1994,Nesterov2004}, in the following discussion, we assume that $\phi_i$ is linear as stated in
Assumption A.\ref{as:A2} below.

%+ Assumption A.2.
\begin{assumption}\label{as:A2}
The function $\phi_i$ is linear, i.e. $\phi_i(x_{i}) := c^T_{i}x_{i}$ for $i=1,2$.
\end{assumption}
Let $c := (c_{1}, c_{2})$ be the vector formed from $c_{i}$ ($i=1,2$).
Assumption \aref{as:A2} implies that $tF - \phi$ is $\frac{2}{\sqrt{t}}$-self-concordant. According to Lemma \ref{le:s3_self_concordancy}, $d_i(\cdot, t)$ is
$\frac{2}{\sqrt{t}}$-self-concordant.
Since $\phi_i$  is linear, if we denote by  $F(x) := F_1(x_{1}) + F_2(x_{2})$ the self-concordant barrier of $X$ with the parameter $\nu := \nu_1 + \nu_2$, then
the optimality condition \eqref{eq:s3_optimality_for_smoothed_dual_prob} is reduced to
\begin{eqnarray}\label{eq:s3_optimality_for_smoothed_dual_prob}
c + A^Ty - t\nabla F(x^{*}(y, t)) = 0.
\end{eqnarray}
The following lemma provides an explicit formula for the derivatives of $d(\cdot, t)$. The proof can be found in \cite{Necoara2009,Zhao2005}.

%+ Lemma 3.4.
\begin{lemma}\label{le:s3_derivs_of_sm_dual_func}
Suppose that Assumptions \aref{as:A1} and \aref{as:A2} are satisfied. Then the first and second order derivatives of $d(\cdot,  t)$ on $Y$ are respectively
given as
\begin{eqnarray}\label{eq:s3_deriv_of_sm_dual_func}
&&\nabla d(y, t) = Ax^{*}(y, t) - b ~~\mathrm{and} ~~\nabla^2d(y, t) = \frac{1}{t}A\nabla^2F(x^{*}(y, t))^{-1}A^T,
\end{eqnarray}
where $x^{*}(y, t) = (x^{*}_{1}(y, t), x^{*}_{2}(y, t))$ is the solution of the primal subproblem in \eqref{eq:s3_smoothed_dual_func}.
\end{lemma}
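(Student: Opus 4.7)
\medskip
\noindent\textbf{Proof proposal.}
The plan is to treat each $d_i(\cdot,t)$ separately and then combine the results through the definition $d(y,t)=d_1(y,t)+d_2(y,t)-b^Ty$. Under Assumption \aref{as:A2}, the inner problem defining $d_i(\cdot,t)$ is
\begin{equation*}
d_i(y,t) = \max_{x_i\in\textrm{int}(X_i)}\left\{c_i^Tx_i + y^TA_ix_i - t[F_i(x_i)-F_i(x_i^c)]\right\},
\end{equation*}
whose objective is strictly concave (since $F_i$ is a self-concordant barrier, hence $tF_i-c_i^T(\cdot)$ is strictly convex on $\textrm{int}(X_i)$), so the maximizer $x_i^{*}(y,t)$ is unique and well-defined. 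The optimality condition for the inner problem collapses, under linearity of $\phi_i$, to equation \eqref{eq:s3_optimality_for_smoothed_dual_prob}, namely $c+A^Ty-t\nabla F(x^{*}(y,t))=0$.

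First I would establish the gradient formula by an envelope/Danskin argument. Because the maximizer is unique and lies in $\textrm{int}(X_i)$, the maximum-value function inherits differentiability from the smooth parametric objective, and $\nabla_y d_i(y,t)$ equals the partial derivative of the objective with respect to $y$ evaluated at the optimizer: $\nabla_y d_i(y,t)=A_ix_i^{*}(y,t)$. Summing over $i=1,2$ and differentiating the linear term $-b^Ty$ yields the first equality in \eqref{eq:s3_deriv_of_sm_dual_func}.

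For the Hessian, I would differentiate the optimality condition \eqref{eq:s3_optimality_for_smoothed_dual_prob} implicitly in $y$. Since $F$ is self-concordant with nonempty interior and strictly convex there, $\nabla^2F(x^{*}(y,t))$ is positive definite, so the Implicit Function Theorem applies to the system $c+A^Ty-t\nabla F(x)=0$. Differentiation gives
\begin{equation*}
A^T - t\,\nabla^2F(x^{*}(y,t))\,\frac{\partial x^{*}(y,t)}{\partial y} = 0,
\end{equation*}
hence $\partial x^{*}(y,t)/\partial y = \tfrac{1}{t}\nabla^2F(x^{*}(y,t))^{-1}A^T$. Composing with the gradient formula, $\nabla^2 d(y,t) = A\,(\partial x^{*}/\partial y) = \tfrac{1}{t}A\nabla^2F(x^{*}(y,t))^{-1}A^T$, which is the second identity.

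The main technical point is verifying that $x^{*}(y,t)$ is continuously differentiable in $y$ on $Y$, so that both the envelope argument and the implicit differentiation are rigorous. This reduces to checking that $\nabla^2F(x^{*}(y,t))\succ 0$ (immediate from self-concordance of $F_i$ and the fact that $x^{*}(y,t)\in\textrm{int}(X)$) and that $x^{*}(y,t)$ stays in $\textrm{int}(X)$ for $y$ in a neighborhood of any given point of $Y$; the latter follows from Assumption \aref{as:A1} together with the barrier property of $F$, which prevents the maximizer from approaching the boundary for $t>0$. Given these two facts, the Implicit Function Theorem delivers the required smoothness, and the computations above conclude the proof. \Eproof
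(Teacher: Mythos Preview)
Your proposal is correct and follows the standard envelope/Danskin plus implicit-function argument; the paper itself does not give a proof but simply cites \cite{Necoara2009,Zhao2005}, where essentially this same derivation appears. There is nothing to add.
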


Note that since $A$ is full-row rank and $\nabla^2F(x^{*}(y, t))$ is positive definite, matrix $\nabla^2{d}(y, t)$ is nonsingular for any $y\in Y$. Moreover,
since $F(x)$ and $\phi$ are separable, the Hessian matrix $\nabla^2F$ is block diagonal and they can also be evaluated \textit{in parallel}, see Section
\ref{sec:implementation_detail} for more details about implementation issues.

Now, since $d(\cdot, t)$ is $\frac{2}{\sqrt{t}}$ self-concordant, if we define
\begin{equation}\label{eq:s3_standard_sm_dual_func}
\tilde{d}(y, t) := \frac{1}{t}d(y, t),
\end{equation}
then $\tilde{d}(\cdot, t)$ is standard self-concordant, i.e. $M_{\tilde{d}} = 2$, due to \cite[Corollary 4.1.2]{Nesterov2004}.
For a given vector $v\in\mathbb{R}^m$, we define the norm $\norm{v}_y$ with respect to $\tilde{d}(\cdot,t)$ as $\norm{v}_y :=
[v^T\nabla^2\tilde{d}(y,t)v]^{1/2}$.
% In the sequel, the function $\tilde{d}$ instead of $d(\cdot, t)$ and the norm $\norm{\cdot}_y$ will be used.

%+ 3.3. Recovering the optimality and the feasibility.
\subsection{Recovering the optimality and the feasibility}
It remains to show the relations between the master problem \eqref{eq:s3_smoothed_dual_prob}, the dual problem \eqref{eq:s2_original_dual_prob} and the
original primal problem \eqref{eq:s2_sep_CP2}. We first prove the following lemma.

%+ Lemma 3.2.
\begin{lemma}\label{le:s3_properties_of_smoothed_dual_func}
Let Assumption A.\ref{as:A1} be satisfied. Then:
\begin{itemize}
\item[$\mathrm{a)}$] $d(y, \cdot)$ is non-increasing in $\mathbb{R}_{++}$ for a given $y\in Y$.
\item[$\mathrm{b)}$] $d^{*}(\cdot)$ defined by \eqref{eq:s3_smoothed_dual_prob} is differentiable and non-increasing in $\mathbb{R}_{++}$.
\item[$\mathrm{c)}$] It holds that $d^{*}(t) \leq d^{*}_0$ and $\displaystyle\lim_{t\downarrow 0^{+}}d^{*}(t) = d^{*}_0=\phi^*$.
 Moreover, $x^{*}(t)$ is feasible for problem \eqref{eq:s2_sep_CP2}.
\end{itemize}
\end{lemma}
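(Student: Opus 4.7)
The plan is to treat the three items in order, exploiting the parametric-maximum structure of $d(y,t)$ in (a)--(b) and the sandwich from Lemma \ref{le:s3_bound_of_smoothed_dual_func_to_orig_dual_func} in (c).

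For (a), I would give a pointwise argument. Since $x^c$ minimises $F$, we have $F(x)-F(x^c)\geq 0$ for every $x\in\textrm{int}(X)$, so for any such $x$ the map $t\mapsto t[F(x)-F(x^c)]$ is non-decreasing on $\mathbb{R}_{++}$. Hence the objective in the definition \eqref{eq:s3_smoothed_dual_func} of $d(y,t)$ is non-increasing in $t$ for every fixed $x$, and taking the supremum over $x$ preserves this, proving monotonicity of $t\mapsto d(y,t)$. Equivalently, Danskin's/envelope theorem yields $\partial_t d(y,t)=-[F(x^{*}(y,t))-F(x^c)]\leq 0$; I will record this formula for use in (b).

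For (b), monotonicity is an immediate consequence of (a): if $0<t_1<t_2$ then $d(y,t_1)\geq d(y,t_2)$ for every $y\in Y$, so passing to the infimum over $y$ preserves the inequality, giving $d^{*}(t_1)\geq d^{*}(t_2)$. For differentiability I would first note that, by Lemma \ref{le:s3_derivs_of_sm_dual_func}, $\nabla^2 d(y,t)=\tfrac{1}{t}A\nabla^2 F(x^{*}(y,t))^{-1}A^{T}\succ 0$ because $A$ is full-row rank (Assumption A.\ref{as:A1}(d)) and $\nabla^2 F$ is positive definite on $\textrm{int}(X)$. Thus $d(\cdot,t)$ is strictly convex and its (assumed interior) minimiser $y^{*}(t)$ is unique, so the envelope theorem applies and gives $\tfrac{d}{dt}d^{*}(t)=\partial_t d(y^{*}(t),t)=-[F(x^{*}(t))-F(x^c)]\leq 0$, confirming both differentiability and monotonicity simultaneously.

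For (c), the bound $d^{*}(t)\leq d^{*}_{0}$ is elementary: since $t[F(x)-F(x^c)]\geq 0$, comparing \eqref{eq:s3_smoothed_dual_func} with \eqref{eq:s2_original_dual_func} gives the pointwise inequality $d(y,t)\leq d_{0}(y)$, and taking the min in $y$ yields the claim. For the limit I would sandwich: applying Lemma \ref{le:s3_bound_of_smoothed_dual_func_to_orig_dual_func} at $y=y^{*}(t)$ and using $d_{0}^{*}\leq d_{0}(y^{*}(t))$ produces
\begin{equation*}
d_{0}^{*}-t\bigl[\bar{\zeta}(K_1;\nu_1,t)+\bar{\zeta}(K_2;\nu_2,t)\bigr]\;\leq\;d(y^{*}(t),t)=d^{*}(t)\;\leq\;d_{0}^{*}.
\end{equation*}
Since $t\,\bar{\zeta}(K;\nu,t)=t\nu+t\nu\max\{0,\ln(K/(\nu t))\}\to 0$ as $t\downarrow 0^{+}$, the squeeze gives $\lim_{t\downarrow 0^{+}}d^{*}(t)=d_{0}^{*}$, and strong duality (Assumption A.\ref{as:A1}(a) with the Slater/polyhedral hypothesis) identifies $d_{0}^{*}=\phi^{*}$. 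Finally, feasibility of $x^{*}(t)$ follows from the first-order condition $\nabla_y d(y^{*}(t),t)=0$ together with the gradient formula in Lemma \ref{le:s3_derivs_of_sm_dual_func}: $A x^{*}(t)-b=\nabla d(y^{*}(t),t)=0$, while $x^{*}(t)\in\textrm{int}(X)\subset X$ by construction of the primal subproblem.

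The only delicate point is justifying in (b) and (c) that the minimiser $y^{*}(t)$ is actually attained and lies where $\nabla_y d$ vanishes; this is where one invokes the standing convention, stated just after \eqref{eq:s3_upper_bound_of_dy_minus_dc}, that $Y$ is chosen bounded with $Y^{*}\subset Y$ together with the strict convexity established above. Everything else is mechanical.
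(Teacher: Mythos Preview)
Your proof is correct overall, but it takes a different route from the paper's and carries a small citation mismatch.

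The paper's argument hinges on a single minimax identity: using strong duality it swaps $\min_y$ and $\max_x$ in the definition of $d^{*}(t)$ to obtain
\[
d^{*}(t)=\max_{x\in\int(X)}\bigl\{\phi(x)-t[F(x)-F(x^c)]\ \big|\ Ax=b\bigr\}=\phi(x^{*}(t))-t[F(x^{*}(t))-F(x^c)].
\]
This one line delivers (b) and (c) simultaneously: differentiability and monotonicity of $d^{*}$ come from this being a parametric maximum with objective affine in $t$; feasibility of $x^{*}(t)$ is built into the constraint $Ax=b$ of the rewritten problem; and $d^{*}(t)\le\phi(x^{*}(t))\le\phi^{*}$ follows from $F(x^{*}(t))\ge F(x^c)$. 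Your argument instead applies the envelope theorem twice (once for the inner $\max_x$, once for the outer $\min_y$) and then invokes Lemma~\ref{le:s3_bound_of_smoothed_dual_func_to_orig_dual_func} to squeeze the limit $t\downarrow 0^{+}$. This is sound, and your squeeze is in fact more explicit than the paper's terse appeal to ``continuity'' for the limit; the cost is more moving parts.

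One genuine wrinkle: in (b) and in the feasibility step of (c) you cite Lemma~\ref{le:s3_derivs_of_sm_dual_func} for the Hessian and gradient of $d(\cdot,t)$, but that lemma is stated under Assumptions~A.\ref{as:A1} \emph{and}~A.\ref{as:A2}, whereas the present lemma assumes only~A.\ref{as:A1}. The gradient identity $\nabla d(y,t)=Ax^{*}(y,t)-b$ in fact holds under~A.\ref{as:A1} alone (Danskin with a unique maximizer forced by the barrier), and strict convexity of $d(\cdot,t)$ in $y$ can also be argued without linearity of $\phi$; but as written your proof imports an extra hypothesis. The paper's minimax route sidesteps this entirely, since neither the Hessian formula nor the uniqueness of $y^{*}(t)$ is needed once $d^{*}(t)$ is recast as a maximum over $x$.
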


%+ Proof of Lemma 3.2.
\begin{proof}
Since the function $\xi(x, y, t) := \phi(x) + y^T(Ax-b) - t[F(x) - F(x^c)]$ is strictly concave and linear on $t$, it is well-known that $d(y, t) =
\displaystyle\max_{x\in \int(X)}\xi(x, y, t)$ is differentiable with respect to $t$ and its derivative is given by $\eta'(t) = -[F(x^{*}(y,t)) - F(x^c)]\leq
-\omega(\norm{x^{*}(y,t) - x^c}_{x^c}) \leq 0$ by \eqref{eq:s3_analytic_center_est}. Thus $d(y, \cdot)$ is nonincreasing in $t$ which proves a).

Now, we prove b) and c). From the definitions of $d^{*}(\cdot)$, $d(y,\cdot)$ and $y^{*}(\cdot)$ in \eqref{eq:s3_smoothed_dual_prob}, by using strong duality,
we have
\begin{align}\label{eq:s3_proof2_est1}
d^{*}(t) &= d(y^{*}(t), t) = \min_{y\in Y} d(y, t) \nonumber\\
& = \min_{y\in Y}\max_{x\in\int(X)}\left\{\phi(x) + y^T(Ax-b) - t[F(x)-F(x^c)]\right\} \nonumber\\
& = \max_{x\in\int(X)}\min_{y\in Y}\left\{\phi(x) + y^T(Ax-b) - t[F(x)-F(x^c)]\right\} \\
& = \max_{x\in\int(X)}\left\{\phi(x) - t[F(x)-F(x^c)] ~|~ Ax = b\right\} \nonumber \\
& = \phi(x^{*}(t)) - t[F(x^{*}(t)) - F(x^c)]. \nonumber
\end{align}
It follows from the forth line of \eqref{eq:s3_proof2_est1} that $d^{*}(\cdot)$ is differentiable and nonincreasing in $\mathbb{R}_{++}$.
Moreover, since $x^c$ is the analytic center of $X$, we have $F(x^{*}(t)) - F(x^c) \geq \omega(\norm{x^{*}(t) - x^c}_{x^c})$ due to
\eqref{eq:s3_analytic_center_est}. This inequality implies that $d^{*}(t) \leq \phi(x^{*}(t)) \leq \phi^{*} = d^{*}_0$.
On the other hand, from the forth line of \eqref{eq:s3_proof2_est1}, we also deduce that $x^{*}(t)$ is feasible to \eqref{eq:s2_sep_CP2}.
Furthermore, since $d^{*}(\cdot)$ is continuous on $\mathbb{R}_{++}$, we have $\lim_{t\downarrow0^{+}}d^{*}(t) = d^{*}_0$ which proves c).
\end{proof}
%+ End of the proof.

Let us define the Newton decrement of $\tilde{d}(\cdot,t)$ as follows:
\begin{eqnarray}\label{eq:s3_sm_dual_Newton_decrement}
\lambda = \lambda_{\tilde{d}(\cdot, t)}(y) := \norm{\nabla\tilde{d}(y,t)}_{y}^{*} =
\left[\nabla\tilde{d}(y,t)\nabla^2\tilde{d}(y,t)^{-1}\nabla\tilde{d}(y,t)\right]^{1/2}.
\end{eqnarray}
The following lemma shows the gap between $d(\cdot, t)$ and $d^{*}(t)$.

%+ Lemma 3.5.
\begin{lemma}\label{le:s3_dual_gap_estimate}
Suppose that Assumption A.\ref{as:A1} is satisfied. Then, for any $y\in Y$ and $t > 0$ such that $\lambda_{\tilde{d}(\cdot,t)}(y) < 1$, one has
\begin{equation}\label{eq:s3_dual_gap_estimate_1}
0 \leq t\omega(\lambda_{\tilde{d}(\cdot,t)}(y)) \leq d(y,t) - d^{*}(t) \leq t\omega_{*}(\lambda_{\tilde{d}(\cdot,t)}(y)).
\end{equation}
Consequently, it holds that
\begin{equation}\label{eq:s3_dual_gap_estimate_2}
d(y,t) - d^{*}_0 = d(y,t) - \phi^{*} \leq t\omega_{*}(\lambda_{\tilde{d}(\cdot,t)}(y)).
\end{equation}
\end{lemma}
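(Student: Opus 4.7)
The plan is to reduce the statement to a standard self-concordance bound for the rescaled function $\tilde{d}(\cdot, t) = d(\cdot, t)/t$, which by Lemma~\ref{le:s3_self_concordancy} (combined with the rescaling identity that standardizes the self-concordance parameter to $2$) is a standard self-concordant function on $Y$, and then translate the bound back to $d(\cdot, t)$ by multiplying through by $t$.

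First I would invoke the classical result from the theory of self-concordant functions (see, e.g., Nesterov, \textit{Introductory Lectures on Convex Optimization}, Theorem 4.1.11): if $f$ is standard self-concordant on an open convex domain and the Newton decrement at $y$ satisfies $\lambda_f(y) < 1$, then $f$ attains a unique minimizer $y^{\star}$ inside the Dikin ellipsoid $W^0(y,1)$, and
\begin{equation*}
\omega(\lambda_f(y)) \;\leq\; f(y) - f(y^\star) \;\leq\; \omega_{*}(\lambda_f(y)).
\end{equation*}
Applying this with $f = \tilde{d}(\cdot, t)$ gives
\begin{equation*}
\omega(\lambda_{\tilde{d}(\cdot,t)}(y)) \;\leq\; \tilde{d}(y,t) - \tilde{d}^{\,\star}(t) \;\leq\; \omega_{*}(\lambda_{\tilde{d}(\cdot,t)}(y)),
\end{equation*}
where $\tilde{d}^{\,\star}(t) := \min_{y \in Y}\tilde{d}(y,t) = d^{*}(t)/t$. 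Multiplying both inequalities through by $t > 0$ and using the definition \eqref{eq:s3_standard_sm_dual_func} of $\tilde d$, together with the nonnegativity of $\omega$, yields \eqref{eq:s3_dual_gap_estimate_1}.

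For the consequence \eqref{eq:s3_dual_gap_estimate_2}, I would combine the upper bound just obtained with Lemma~\ref{le:s3_properties_of_smoothed_dual_func}(c), which asserts $d^{*}(t) \leq d^{*}_0 = \phi^{*}$. This gives
\begin{equation*}
d(y,t) - \phi^{*} \;=\; d(y,t) - d^{*}_0 \;\leq\; d(y,t) - d^{*}(t) \;\leq\; t\,\omega_{*}(\lambda_{\tilde{d}(\cdot,t)}(y)),
\end{equation*}
which is the claim.

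The only subtle point, and the part I would verify with some care, is the applicability of the classical self-concordance bound: one must ensure that the minimizer $y^{*}(t)$ exists in the relative interior where the Hessian is well-defined, and that the condition $\lambda_{\tilde{d}(\cdot,t)}(y) < 1$ guarantees its existence rather than just assuming it. This follows because (i) $\nabla^2 \tilde d(y,t) = (1/t^2) A \nabla^2 F(x^{*}(y,t))^{-1} A^T$ is positive definite throughout $Y$ by Lemma~\ref{le:s3_derivs_of_sm_dual_func} together with the full-row-rank of $A$ (Assumption~A.\ref{as:A1}(d)), so $\tilde d(\cdot,t)$ is strictly convex on $Y$, and (ii) the standard self-concordance theory then ensures that the condition $\lambda < 1$ already forces the existence of a minimizer inside the corresponding Dikin ellipsoid, so $\tilde d^{\,\star}(t)$ is attained and the inequality is meaningful. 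With this verification in hand, the remainder of the proof is a direct rescaling.
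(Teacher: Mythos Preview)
Your proposal is correct and follows essentially the same approach as the paper: apply the standard self-concordance gap estimate (the paper cites Nesterov's Theorem~4.1.13 and inequality~4.1.17) to the rescaled function $\tilde d(\cdot,t)$, multiply through by $t$, and then invoke Lemma~\ref{le:s3_properties_of_smoothed_dual_func}(c) for the second inequality. Your additional care in justifying the existence of the minimizer via positive definiteness of the Hessian is a welcome detail that the paper leaves implicit.
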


%+ Proof of Lemma 3.5.
\begin{proof}
Since $\tilde{d}(\cdot, t)$ is standard self-concordant, for any $y\in Y$ such that $\lambda_{\tilde{d}(\cdot,t)}(y) < 1$, and $y^{*}(t) =
\displaystyle\argmin_{y\in Y}\tilde{d}(y,t)$,
by applying \cite[Theorem 4.1.13, inequality 4.1.17]{Nesterov2004}, we have
\begin{eqnarray*}
0\leq \omega(\lambda_{\tilde{d}(\cdot,t)}(y)) \leq \tilde{d}(y,t) - \tilde{d}(y^{*}(t),t) \leq \omega_{*}(\lambda_{\tilde{d}(\cdot,t)}(y)).
\end{eqnarray*}
This inequality is indeed \eqref{eq:s3_dual_gap_estimate_1} due to \eqref{eq:s3_standard_sm_dual_func}.
To prove \eqref{eq:s3_dual_gap_estimate_2}, we note that $d^{*}(t) - d^{*}_0 \leq 0$ by Lemma \ref{le:s3_derivs_of_sm_dual_func} c), adding this inequality to
\eqref{eq:s3_dual_gap_estimate_1} and noting that $d^{*}_0 = \phi^{*}$ we obtain \eqref{eq:s3_dual_gap_estimate_2}.
%\Eproof
\end{proof}
%+ End of the proof.

We can also estimate a lower bound for $d^{*}(t) - d_0^{*}$. Since $F$ is convex, by using \eqref{eq:s3_analytic_center_est}, we have
\begin{equation*}
F(x) - F(x^c) \leq \nabla{F}(x)^T(x-x^c) \leq \norm{\nabla{F}(x)}^{*}_{x^c}\norm{x-x^c}_{x^c} \leq (\nu+2\sqrt{\nu})\norm{\nabla{F}(x)}^{*}_{x^c}.
\end{equation*}
Since $X$ is bounded and $\nabla{F}$ is continuous, using the above inequality, we have $c_X^F := \max_{x\in X}\norm{\nabla{F}(x)}^{*}_{x^c} < +\infty$. Thus
it follows from the last inequality that $\max_{x\in X}\{F(x) - F(x^c)\} \leq (\nu+2\sqrt{\nu})c^F_X < +\infty$.
Moreover, for any functions $u, v$ on $Z$, we have $\displaystyle\max_{z\in Z}\{u(z) - v(z)\} \geq \max_{z\in Z}u(z) - \max_{z\in Z} v(z)$. Finally, we
estimate $d^{*}(t) - d_0^{*}$ as
\begin{align*}
d^{*}(t) &= \min_{y\in Y}d(y,t) = \min_{y\in Y}\left\{\max_{x\in\int(X)}\left\{L(x,y) - t[F(x) - F(x^c)]\right\}\right\} \nonumber\\
& \geq \min_{y\in Y}\left\{\max_{x\in\int(X)}\left\{L(x,y)\right\} - t\max_{x\in\int(X)}\left\{F(x) - F(x^c)\right\}\right\} \nonumber\\
& \geq \min_{y\in Y}\max_{x\in X}L(x,y) - t\max_{x\in\int(X)}\left\{F(x) - F(x^c)\right\} \nonumber\\
& \geq d^{*}_0 - t(\nu+2\sqrt{\nu})c^F_X.
\end{align*}
Combining this inequality with \eqref{eq:s3_dual_gap_estimate_1} we obtain
\begin{equation*}
d(y,t) - d^{*}_0 \geq t\left[\omega(\lambda_{\tilde{d}(\cdot,t)}(y)) - (\nu+2\sqrt{\nu})c^F_X\right].
\end{equation*}

Now, we define an approximate solution of the dual problem
\eqref{eq:s2_original_dual_prob} as follows:
%+ Definition 3.1.
\begin{definition}\label{de:s3_epsilon_dual_sol}
For a given tolerance $\varepsilon_d > 0$, a point $y^{*}(t)$ is said to be an $\varepsilon_d$-solution of \eqref{eq:s2_original_dual_prob} if $0 \leq d^{*}_0
- d^{*}(t) \leq \varepsilon_d$.
\end{definition}

Let $y^{*}(t)$ be an $\varepsilon_d$-solution of \eqref{eq:s2_original_dual_prob} and $y \in Y$ such that $\lambda=\lambda_{\tilde{d}(\cdot,t)}(y)\leq\beta$ for
a fixed $\beta \in (0, 1)$. We have
\begin{equation*}
0 \leq d^{*}_ 0 - d(y,t)\leq \abs{d(y,t)-d^{*}(t)} + \abs{d^{*}(t)-d^{*}_0} \leq \varepsilon_d + t\omega_{*}(\lambda_{\tilde{d}(\cdot,t)}(y)) \leq \varepsilon_d
+ \omega_{*}(\beta)t.
\end{equation*}
Consequently, if we choose $t$ such that $t \leq \omega_{*}(\beta)^{-1}\varepsilon_d$ then
\begin{equation}\label{eq:s3_eps_dual_sol}
0 \leq d^{*}_0 - d(y, t)  = \phi^{*} - d(y,t) \leq 2\varepsilon_d.
\end{equation}
The algorithms presented in the next sections aim to find  a $2\varepsilon_d$-approximate solution of the dual problem \eqref{eq:s2_original_dual_prob} in the
sense of \eqref{eq:s3_eps_dual_sol}. Thus $d(y,t)$ is a $2\varepsilon_d$-approximation of the optimal value $\phi^{*}$.

It remains to quantify the feasibility gap of the original problem \eqref{eq:s2_sep_CP2} with respect to the coupling equality constraint $Ax = b$. We define
this \textit{feasibility gap} with respect to $x^{*}(y,t)$ as follows:
\begin{equation}\label{eq:s3_feasibility}
\mathcal{G}_{\mathrm{feas}}(y,t) := \norm{Ax^{*}(y,t) - b}_y^{*}.
\end{equation}
Here, $x^{*}(y,t) \in \int(X)$. From \eqref{eq:s3_feasibility}, \eqref{eq:s3_standard_sm_dual_func} and \eqref{eq:s3_sm_dual_Newton_decrement} and noting that
$\lambda \leq \beta$, we have:
\begin{equation*}
\mathcal{G}_{\mathrm{feas}}(y,t) = \norm{\nabla{d}(y,t)}_{y}^{*} = t\lambda \leq t\beta.
\end{equation*}
Therefore, with $t \leq \omega_{*}(\beta)^{-1}\varepsilon_d$ the feasibility gap reaches:
\begin{equation*}
\mathcal{G}_{\mathrm{feas}}(y,t) \leq \beta\omega_{*}(\beta)^{-1}\varepsilon_d.
\end{equation*}

%+ 4. Inexact-perturbed path-following method.
\section{Inexact perturbed path-following method for Lagrangian decomposition}\label{sec:inexact_perturbed_path_following_alg}
This section presents an inexact perturbed path-following algorithm for solving approximately \eqref{eq:s2_original_dual_prob}.

%+ 4.1. The inexactness of the primal problem.
\subsection{Inexact solution of the primal subproblem}\label{subsec:s4_inexactness_def}
Firstly, we define an inexact solution of \eqref{eq:s3_smoothed_dual_func} by using local norms. For given $y\in Y$ and $t > 0$, suppose that we allow to solve
approximately \eqref{eq:s3_smoothed_dual_func} up to a given accuracy $\bar{\delta} \geq 0$. More precisely, we define this approximate
solution as follows:
\begin{definition}\label{de:s4_app_sol}
A vector $\bar{x}_{\bar{\delta}}(y,t)$ is said to be a $\bar{\delta}$-approximate solution of $x^{*}(y,t)$ if
\begin{equation}\label{eq:s4_approx_sol}
\norm{\bar{x}_{\bar{\delta}}(y,t) - x^{*}(y,t)}_{x^{*}(y,t)} \leq \bar{\delta}.
\end{equation}
\end{definition}
Associated with $\bar{x}_{\bar{\delta}}(\cdot)$, we define the following function:
\begin{eqnarray}\label{eq:s4_inex_sm_dual_func}
d_{\bar{\delta}}(y, t) := c^T\bar{x}_{\bar{\delta}}(y,t) + y^T(A\bar{x}_{\bar{\delta}}(y,t) - b) - t[F(\bar{x}_{\bar{\delta}}(y,t)) - F(x^c)].
\end{eqnarray}
This function can be considered as an \textit{inexact smooth dual version} of $d_0$.
Next, we introduce two quantities:
\begin{equation}\label{eq:s4_derivs_of_inex_sm_dual_func}
\nabla d_{\bar{\delta}}(y, t) := A\bar{x}_{\bar{\delta}}(y,t) - b, ~~\mathrm{and}~~ \nabla^2d_{\bar{\delta}}(y,t) :=
\frac{1}{t}A\nabla^2F(\bar{x}_{\bar{\delta}}(y,t))^{-1}A^T.
\end{equation}
Since $x^{*}(y,t)\in\dom(F)=\int(X)$, we can choose an appropriate $\bar{\delta}\geq 0$ such that $\bar{x}_{\bar{\delta}}(y,t)\in\dom(F)$. Hence,
$\nabla^2F(\bar{x}_{\bar{\delta}}(y,t))$ is positive definite which means that $\nabla^2d_{\bar{\delta}}$ is well-defined.
Note that $\nabla d_{\bar{\delta}}$ and $\nabla^2d_{\bar{\delta}}$ are not the gradient vector and Hessian matrix of $d_{\bar{\delta}}(\cdot, t)$. However, due
to Lemma \ref{le:s3_derivs_of_sm_dual_func} and \eqref{de:s4_app_sol}, we can consider these quantities as an approximate gradient vector and Hessian matrix of
$d(\cdot, t)$, respectively.

Let
\begin{equation}\label{eq:s4_standard_inx_sm_dual_func}
\tilde{d}_{\bar{\delta}}(y,t) := \frac{1}{t}d_{\bar{\delta}}(y,t),
\end{equation}
and $\bar{\lambda}$ be the inexact Newton decrement of $\tilde{d}_{\delta}$ which is defined by
\begin{eqnarray}\label{eq:s4_inex_dual_Newton_decrement}
\bar{\lambda} = \bar{\lambda}_{\tilde{d}_{\bar{\delta}}(\cdot, t)}(y) := \dnorm{\nabla\tilde{d}_{\bar{\delta}}(y,t)}_y^{*}
= \left[\nabla\tilde{d}_{\bar{\delta}}(y,t)\nabla^2\tilde{d}_{\bar{\delta}}(y,t)^{-1}\nabla\tilde{d}_{\bar{\delta}}(y,t)\right]^{1/2}.
\end{eqnarray}
Here, we use the norm $\dnorm{\cdot}_y$ to distinguish it from $\norm{\cdot}_y$.

%+ 4.1. The algorithmic framework.
\subsection{The algorithmic framework}\label{subsec:s4_alg_framework}
From Lemma \ref{le:s3_dual_gap_estimate} we see that if we can generate a sequence $\{(y^k, t_k)\}_{k\geq 0}$ such that $\lambda_k := \lambda_{\tilde{d}(\cdot,
t_k)}(y^k) \leq \beta < 1$, then
\begin{equation*}
d(y^k,t_k) \uparrow d^{*}_0 = \phi^{*} ~ \mathrm{and}~ \mathcal{G}_{\mathrm{feas}}(y^k,t_k) \to 0, ~\mathrm{as}~ t_k\downarrow 0^{+}.
\end{equation*}
The aim of the algorithm is to generate $\{(y^k, t_k)\}_{k\geq 0}$ such that $\lambda_k \leq \beta < 1$.
First, we fix $t = t_0 > 0$ and find a point $y^0\in Y$ such that $\lambda_{\tilde{d}(\cdot, t_0)}(y^0) \leq \beta$. Then we simultaneously update $y$ and
$t$ such that $t$ tends to zero. The algorithmic framework is presented as follows.

%+ Path-following algorithmic framework.
\textsc{Inexact-Perturbed Path-following algorithmic framework.}
\begin{itemize}
\item[]\textbf{Initialization. } Choose an appropriate $\beta\in (0, 1)$ and a tolerance $\varepsilon_d > 0$. Fix $t=t_0 > 0$.
\item[]\textbf{Phase 1: }(\textit{Determine a starting point $y^0\in Y$ such that $\lambda_{\tilde{d}(\cdot, t_0)}(y^0) \leq \beta$}).
\begin{itemize}
\item[] Choose an initial vector $y^{0,0} \in Y$. Set $j = 0$.
\item[] \texttt{For $j=0,1, \dots$ perform}
\begin{enumerate}
\item If $\lambda_j := \lambda_{\tilde{d}(\cdot, t_0)}(y^{0,j}) \leq \beta$ then set $y^0 := y^{0,j}$ and terminate.
\item Solve the primal subproblems \eqref{eq:s3_smoothed_dual_func} \textit{in parallel} to obtain an approximation of $x^{*}(y^{0,j},t_0)$.
\item Evaluate $\nabla{d}_{\bar{\delta}}(y^{0,j},t_0)$ and $\nabla^2d_{\bar{\delta}}(y^{0,j},t_0)$ by \eqref{eq:s4_derivs_of_inex_sm_dual_func}.
\item Perform an inexact-perturbed damped Newton iteration: $y^{0,j+1} := y^{0,j} -
\lambda_j(1+\lambda_j)^{-1}\nabla^2d_{\bar{\delta}}(y^{0,j},t_0)^{-1}\nabla{d}_{\bar{\delta}}(y^{0,j},t_0)$.
\end{enumerate}
\item[]\texttt{End For}
% \item[] Output: A vector $\bar{y} \in Y$ such that $\lambda_{\tilde{d}(\cdot, t_0)}(\bar{y}) \leq \beta$.
\end{itemize}
\item[]\textbf{Phase 2.} \textit{Path-following iterations}
\begin{itemize}
\item[] Compute $\sigma \in (0,1)$. Set $k := 0$.
\item[] \texttt{For $k=0, 1, \dots$ perform:}
\begin{enumerate}
\item If $t_k \leq \varepsilon_d/\omega_{*}(\beta)$ then terminate.
\item Update $t_{k+1} := (1-\sigma)t_k$.
\item Solve \eqref{eq:s3_smoothed_dual_func} \textit{in parallel} to obtain an approximation of $x^{*}(y^k,t_{k+1})$.
\item Evaluate the quantities $\nabla{d}_{\bar{\delta}}(y^k,t_{k+1})$ and $\nabla^2d_{\bar{\delta}}(y^k, t_{k+1})$.
\item Perform an inexact-perturbed full-step Newton iteration $y^{k+1} := y^k - \nabla^2d_{\bar{\delta}}(y^k, t_{k+1})^{-1}\nabla d_{\bar{\delta}}(y^k,
t_{k+1})$.
\end{enumerate}
\item[] \texttt{End For}
\end{itemize}
\item[]\textbf{Output.} A $2\varepsilon_d$-approximate solution $y^{k}$ of \eqref{eq:s2_original_dual_prob}.
\end{itemize}
This algorithm is still conceptual. In the following subsections, we shall specify each step of this algorithmic framework in detail.

Let us emphasize an important point. In order to compute $d(y,t)$ we have to solve exactly the maximization problem in \eqref{eq:s3_smoothed_dual_func} or
equivalently, to solve the system of nonlinear equations \eqref{eq:s3_optimality_for_smoothed_dual_prob}. This requirement is impractical. In practice, we can
only solve this problem up to a desired accuracy $\bar{\delta} > 0$. Therefore, the theory of the path-following algorithm presented in
\cite{Kojima1993,Mehrotra2009,Necoara2009,Shida2008,Zhao2005} for solving \eqref{eq:s3_smoothed_dual_prob} may no longer be satisfied.
Here, we propose an inexact perturbed path-following algorithm for solving \eqref{eq:s3_smoothed_dual_prob}.
This algorithm allows us to solve inexactly the primal subproblem \eqref{eq:s3_smoothed_dual_func}. Consequently, inexact-perturbed Newton-type iterations are
performed, which means that not only inexact gradient but also inexact Hessian of $d(\cdot,t)$ are used.

%+ 4.3. Stopping criterion for the primal problems.
\subsection{Computing inexact solution $\bar{x}_{\bar{\delta}}$}
Note that condition \eqref{eq:s4_approx_sol} can not be used in practice to compute $\bar{x}_{\bar{\delta}}$ since $x^{*}(y,t)$ is unknown.
We show how to compute $\bar{x}_{\bar{\delta}}$ such that \eqref{eq:s4_approx_sol} holds based on the optimality condition
\eqref{eq:s3_optimality_for_smoothed_dual_prob}.

For sake of notational simplicity, we abbreviate by $\bar{x}_{\bar{\delta}} := \bar{x}_{\bar{\delta}}(y, t)$ and $x^{*} := x^{*}(y,t)$.
The error of the approximate solution $\bar{x}_{\bar{\delta}}$ to $x^{*}$ is defined as
\begin{eqnarray}\label{eq:s4_sol_distance}
\delta(\bar{x}_{\bar{\delta}}, x^{*}) := \norm{\bar{x}_{\bar{\delta}}(y,t) - x^{*}(y,t)}_{x^{*}(y,t)}.
\end{eqnarray}
It follows from the definitions of $d(\cdot,t)$ and $d_{\bar{\delta}}(\cdot,t)$, and \eqref{eq:s3_optimality_for_smoothed_dual_prob} that
\begin{align*}
d(y,t) - d_{\bar{\delta}}(y,t) &= [c + A^Ty](x^{*} - \bar{x}_{\bar{\delta}}) - t[F(x^{*}) - F(\bar{x}_{\bar{\delta}})] \nonumber\\
& = -t[F(x^{*}) + \nabla F(x^{*})^T(\bar{x}_{\bar{\delta}} - x^{*}) - F(\bar{x}_{\bar{\delta}})].
\end{align*}
Since $F$ is self-concordant, by applying \cite[Theorems 4.1.7 and 4.1.8]{Nesterov2004}, and the definition of $\delta(\bar{x}_{\bar{\delta}},x^{*})$,
the above equality implies
\begin{equation}\label{eq:s4_bound_on_sm_dual_and_inex_dual_funs}
0 \leq t\omega(\delta(\bar{x}_{\bar{\delta}},x^{*})) \leq d(y,t) - d_{\bar{\delta}}(y,t) \leq t\omega_{*}(\delta(\bar{x}_{\bar{\delta}},x^{*})).
\end{equation}
Here, the last inequality holds if  $\delta(\bar{x}_{\bar{\delta}},x^{*}) < 1$.

Next, using again the optimality condition \eqref{eq:s3_optimality_for_smoothed_dual_prob} we have
\begin{align*}
E^c_{\bar{\delta}} := \norm{c + A^Ty - t\nabla F(\bar{x}_{\bar{\delta}})}_{x^c}^{*} &\overset{\tiny\eqref{eq:s3_optimality_for_smoothed_dual_prob}}{=}
t\norm{\nabla F(\bar{x}_{\bar{\delta}}) - \nabla
F(x^{*})}_{x^c}^{*} \nonumber\\
&\geq \frac{t}{\nu+2\sqrt{\nu}}\norm{\nabla F(\bar{x}_{\bar{\delta}}) - \nabla F(x^{*})}_{x^{*}}^{*},
\end{align*}
where the last inequality follows from \cite[Corollary 4.2.1]{Nesterov2004}.
Combining this inequality and \cite[Theorem 4.1.7]{Nesterov2004}, we obtain
\begin{align*}
\frac{\delta(\bar{x}_{\bar{\delta}},x^{*})^2}{1+\delta(\bar{x}_{\bar{\delta}},x^{*})} &\leq  \left[\nabla F(\bar{x}_{\bar{\delta}}) - \nabla
F(x^{*})\right]^T(\bar{x}_{\bar{\delta}}-x^{*}) \\
& \leq \norm{\nabla F(\bar{x}_{\bar{\delta}}) - \nabla F(x^{*})}_{x^{*}}^{*}\norm{\bar{x}_{\bar{\delta}} - x^{*}}_{x^{*}}\\
&\leq \frac{(\nu+2\sqrt{\nu})E^c_{\bar{\delta}}}{t}\delta(\bar{x}_{\bar{\delta}},x^{*}).
\end{align*}
Hence, we get
\begin{equation}\label{eq:s3_condition_of_x_delta}
\delta(\bar{x}_{\bar{\delta}},x^{*}) \leq \frac{(\nu+2\sqrt{\nu})E^c_{\bar{\delta}}}{t-(\nu+2\sqrt{\nu})E^c_{\bar{\delta}}},
\end{equation}
provided that $t > (\nu+2\sqrt{\nu})E^c_{\bar{\delta}}$.
Let us define an accuracy $\varepsilon_p$ for the primal subproblem \eqref{eq:s3_smoothed_dual_func} as
\begin{equation}\label{eq:s4_choice_tol_for_inex_case}
\varepsilon_p := \frac{\bar{\delta}t}{(\nu+2\sqrt{\nu})(1+\bar{\delta})} \geq 0.
\end{equation}
Then it follows from \eqref{eq:s3_condition_of_x_delta} that if
\begin{equation}\label{eq:s4_inex_primal_optimality}
E^c_{\bar{\delta}} = \norm{c + A^Ty - t\nabla F(\bar{x}_{\bar{\delta}})}_{x^c}^{*} \leq \frac{\bar{\delta}t}{(\nu+2\sqrt{\nu})(1+\bar{\delta})}
\end{equation}
then $\bar{x}_{\bar{\delta}}(y,t)$ satisfies \eqref{eq:s4_approx_sol}.

It remains to consider the distance from $d_{\delta}$ to $d^{*}_0$ when $t$ is sufficiently small.
Suppose that $t \leq \omega_{*}(\beta)^{-1}\varepsilon_d$. Then, by combining \eqref{eq:s3_eps_dual_sol} and
\eqref{eq:s4_bound_on_sm_dual_and_inex_dual_funs} we have
\begin{equation}\label{eq:s4_eps_approx_sol}
\abs{d_{\bar{\delta}}(y,t) - \phi^{*}} = \abs{d_{\bar{\delta}}(y,t) - d_0^{*}} \leq 2\left[1 +
\omega_{*}(\beta)^{-1}\omega_{*}(\bar{\delta})\right]\varepsilon_d,
\end{equation}
provided that $\bar{\delta} < 1$.

%+ Remark 4.1.
\begin{remark}\label{re:s4_choice_eps}
Since $E_{\bar{\delta}} := \norm{c + A^Ty - t\nabla F(\bar{x}_{\bar{\delta}})}_{\bar{x}_{\bar{\delta}}}^{*} \geq
(1-\bar{\delta})\norm{c + A^Ty - t\nabla F(\bar{x}_{\bar{\delta}})}^{*}_{x^{*}}$. By the same argument as before, we can show that if $E_{\bar{\delta}} \leq
\hat{\varepsilon}_p$, where $\hat{\varepsilon}_p := \frac{\bar{\delta}(1-\bar{\delta})t}{1+\bar{\delta}}$ then \eqref{eq:s4_approx_sol} holds. This rule can be
used to terminate the algorithms presented in the next sections.
\end{remark}

%+ 4.3. Inexact full-step Newton iteration.
\subsection{Phase 2 - The path-following scheme with inexact-perturbed full-step Newton iterations}\label{subsec:inexact_NT_iteration}
Now, we analyze Steps 2-5 in Phase 2 of the algorithmic framework. In the path-following fashion, we only perform one inexact-perturbed full-step Newton (IPFNT)
iteration for each value of parameter $t$. In other words, the IPFNT iteration and the update of $t$ are simultaneously carried out. The parameter $t$ is
decreased by $t_{+} := t - \Delta t$, where $\Delta t > 0$. Hence, one step of the path-following method is performed as follows:
\begin{equation}\label{eq:s4_path_following_iter}
\begin{cases}
t_{+} := t - \Delta t,\\
y_{+} := y - \nabla^2d_{\bar{\delta}}(y,t_{+})^{-1}\nabla{d}_{\bar{\delta}}(y, t_{+}).
\end{cases}
\end{equation}
Since Newton method is invariant under linear transformations, by \eqref{eq:s4_inex_sm_dual_func}, the second line of \eqref{eq:s4_path_following_iter} is
equivalent to
\begin{equation}\label{eq:s4_inexact_fs_NT_invariant}
y_{+} := y - \nabla^2\tilde{d}_{\bar{\delta}}(y,t_{+})^{-1}\nabla\tilde{d}_{\bar{\delta}}(y,t_{+}).
\end{equation}

For sake of notational simplicity, we denote all the functions at $(y_{+}, t_{+})$ and $(y, t_{+})$ by the sub-index ``$_+$'' and ``$_1$'', respectively, and
at $(y,t)$ without index in the following analysis. More precisely, we denote by 
\begin{eqnarray*}
&&\begin{array}{lll}
&\bar {\lambda}_{+} :=  \bar{\lambda}_{\tilde{d}_{\bar{\delta}}(\cdot,t_{+})}(y_{+}), ~&~
\delta_{+} := \delta(\bar{x}_{\bar{\delta}+}, x^{*}_{+}) = \norm{\bar{x}_{\bar{\delta}}(y_{+},t_{+}) - x^{*}(y_{+},t_{+})}_{x^{*}(y_{+},t_{+})},\\
&\bar {\lambda}_1 :=  \bar{\lambda}_{\tilde{d}_{\bar{\delta}}(\cdot,t_{+})}(y), ~&~
\delta_1 := \delta(\bar{x}_{\bar{\delta}1}, x^{*}_1) = \norm{\bar{x}_{\bar{\delta}}(y,t_{+}) - x^{*}(y,t_{+})}_{x^{*}(y,t_{+})},\\
&\bar {\lambda} :=  \bar{\lambda}_{\tilde{d}_{\bar{\delta}}(\cdot,t)}(y), ~&~
\delta := \delta(\bar{x}_{\bar{\delta}}, x^{*}) = \norm{\bar{x}_{\bar{\delta}}(y,t) - x^{*}(y,t)}_{x^{*}(y,t)},
\end{array}\\
\textrm{and}~&&\mathrm{by}\\
&&\begin{array}{lll}
\Delta := \norm{\bar{x}_{\bar{\delta}}(y,t_{+}) - \bar{x}_{\bar{\delta}}(y,t)}_{\bar{x}_{\bar{\delta}}(y,t)} ~\mathrm{and}~
\Delta^{*} := \norm{x^{*}(y, t_{+}) - x^{*}(y,t)}_{x^{*}(y,t)}.
\end{array}
\end{eqnarray*}
Note that the above notation  does not cause any confusion since it can be recognized from the context.

%+ 4.3.1. The main estimate.
\subsubsection{The main estimate}
Using the above notation, we provide a main estimate which will be used to analyze the convergence of the algorithm presented in Subsection
\ref{subsubsec:algorithm}. The proof of this result is postponed to Subsection \ref{subsec:proofs_of_technical_lemmas}.

%+ Lemma 4.2.
\begin{lemma}\label{le:s4_main_inequalities}
Let $y\in Y$ be given and  $t> 0$. Let  $(y_{+},t_{+})$ be a pair generated by \eqref{eq:s4_path_following_iter}.
Suppose that $\delta_1 + 2\Delta + \bar{\lambda} < 1$, $ \delta_{+} < 1$ and $\xi := \frac{\Delta + \bar{\lambda}}{1-\delta_1 - 2\Delta - \bar{\lambda}}$. Then
\begin{eqnarray}\label{eq:s4_main_estimate}
\bar{\lambda}_{+} \leq \frac{1}{(1 - \delta_{+})}\left\{ \delta_{+} + \delta_1 + \xi^2 + \delta_1\left[(1-\delta_1)^{-2} + 2(1-\delta_1)^{-1}\right]\xi\right\}.
\end{eqnarray}
Moreover, the right-hand side of \eqref{eq:s4_main_estimate} is nondecreasing with respect to all variables $\delta_{+}$, $\delta_1$, $\Delta$ and
$\bar{\lambda}$.

In particular, if we set $\delta_{+} = 0$ and $\delta_1 = 0$, i.e. \eqref{eq:s3_smoothed_dual_func} is solved exactly, then $\bar{\lambda}_{+} = \lambda_{+}$,
$\bar{\lambda} = \lambda$ and \eqref{eq:s4_main_estimate} collapses to
\begin{equation}\label{eq:s4_main_estimate_for_exact_case}
\lambda_{+} \leq \left(\frac{\lambda + \Delta^{*}}{1-2\Delta^{*} - \lambda}\right)^2,
\end{equation}
provided that $\lambda + 2\Delta^{*} < 1$.
\end{lemma}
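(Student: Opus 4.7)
The plan is to control $\bar{\lambda}_+ = \dnorm{\nabla \tilde{d}_{\bar{\delta}}(y_+, t_+)}_{y_+}^*$ by a decomposition that isolates three sources of error: (a)~the inexactness of the primal subproblem at $y_+$, quantified by $\delta_+$; (b)~the inexactness at $y$ used to form the Newton step, quantified by $\delta_1$; and (c)~the genuine path-following residual driven by $\Delta + \bar{\lambda}$ via the self-concordance of $\tilde{d}(\cdot, t_+)$.

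First I would rewrite the Newton equation \eqref{eq:s4_inexact_fs_NT_invariant} in the form
\begin{equation*}
\nabla \tilde{d}_{\bar{\delta}}(y, t_+) + \nabla^2 \tilde{d}_{\bar{\delta}}(y, t_+)(y_+ - y) = 0,
\end{equation*}
and then apply the fundamental theorem of calculus to $\nabla \tilde{d}(\cdot, t_+)$ along $[y, y_+]$ to obtain $\nabla \tilde{d}(y_+, t_+) = \nabla \tilde{d}(y, t_+) + \overline{G}(y_+ - y)$, where $\overline{G}$ denotes the averaged exact Hessian of $\tilde{d}(\cdot, t_+)$ along the segment. Adding and subtracting $\nabla \tilde{d}_{\bar{\delta}}(y_+, t_+)$ and $\nabla \tilde{d}_{\bar{\delta}}(y, t_+)$, and using the Newton equation to eliminate $\nabla \tilde{d}_{\bar{\delta}}(y, t_+)$, yields a three-piece decomposition of $\nabla \tilde{d}_{\bar{\delta}}(y_+, t_+)$: (\textrm{I})~the gradient error $\nabla \tilde{d}_{\bar{\delta}}(y_+, t_+) - \nabla \tilde{d}(y_+, t_+)$; (\textrm{II})~the gradient error $\nabla \tilde{d}(y, t_+) - \nabla \tilde{d}_{\bar{\delta}}(y, t_+)$; and (\textrm{III})~the Hessian-mismatch residual $[\overline{G} - \nabla^2 \tilde{d}_{\bar{\delta}}(y, t_+)](y_+ - y)$.

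Next, each piece is bounded via the self-concordant calculus of $F$. Piece~(\textrm{I}) equals $t_+^{-1} A[\bar{x}_{\bar{\delta}}(y_+, t_+) - x^{*}(y_+, t_+)]$, whose $\dnorm{\cdot}_{y_+}^{*}$-norm is bounded in terms of $\delta_+/(1-\delta_+)$ after passing between the norms induced by $\nabla^2 F(\bar{x}_{\bar{\delta}}(y_+, t_+))$ and $\nabla^2 F(x^{*}(y_+, t_+))$, which differ by a $(1-\delta_+)^{\pm 1}$ factor inside the Dikin ellipsoid. Piece~(\textrm{II}) is handled analogously and contributes the $\delta_1$ term. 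For piece~(\textrm{III}), since $\bar{\lambda}$ and $\Delta$ are naturally measured at $\bar{x}_{\bar{\delta}}(y, t)$, I would first translate everything to that base norm; by the triangle inequality the relevant primal displacements from $\bar{x}_{\bar{\delta}}(y, t)$ --- namely to $\bar{x}_{\bar{\delta}}(y, t_+)$, to $x^{*}(y, t_+)$, and further along the Newton trajectory --- are bounded by $\delta_1 + 2\Delta + \bar{\lambda} < 1$, which keeps the entire segment inside the Dikin ellipsoid. Applying the second-order self-concordant inequality of Theorem~4.1.7 in \cite{Nesterov2004} to the Hessian gap along the segment then yields control of piece~(\textrm{III}) of the form $\xi^2$ plus a cross term linear in $\xi$ whose coefficient $\delta_1[(1-\delta_1)^{-2} + 2(1-\delta_1)^{-1}]$ records the extra cost of substituting $\nabla^2 F(\bar{x}_{\bar{\delta}}(y, t_+))$ for $\nabla^2 F(x^{*}(y, t_+))$ in the Newton system.

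The main obstacle is the careful bookkeeping of local norms at the four distinct base points $x^{*}(y, t)$, $\bar{x}_{\bar{\delta}}(y, t)$, $\bar{x}_{\bar{\delta}}(y, t_+)$, and $x^{*}(y_+, t_+)$: each transition introduces a $(1-\cdot)^{\pm 1}$ or $(1-\cdot)^{\pm 2}$ factor coming from the self-concordant scaling of $\nabla^2 F$ within a Dikin ellipsoid, and only after cumulating these factors do the precise exponents in \eqref{eq:s4_main_estimate} emerge. Once the bound is established, monotonicity in each of $\delta_+, \delta_1, \Delta, \bar{\lambda}$ is transparent because the numerator of $\xi$ is nondecreasing and its denominator nonincreasing in these variables, and the prefactor $1/(1-\delta_+)$ is nondecreasing in $\delta_+$. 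For the exact case $\delta_+ = \delta_1 = 0$ we have $\bar{x}_{\bar{\delta}} = x^{*}$ identically, hence $\Delta = \Delta^{*}$ and $\bar{\lambda} = \lambda$; only the $\xi^2$ term survives, giving $\lambda_+ \leq [(\lambda + \Delta^{*})/(1 - 2\Delta^{*} - \lambda)]^2$ as claimed.
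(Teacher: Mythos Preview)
Your three–piece decomposition and the overall strategy match the paper's proof, but your handling of piece~(III) contains a conceptual gap. You claim that ``the relevant primal displacements from $\bar{x}_{\bar{\delta}}(y,t)$ \ldots\ are bounded by $\delta_1 + 2\Delta + \bar{\lambda} < 1$'', but $\bar{\lambda}$ is a \emph{dual} Newton decrement, not a primal displacement; there is no direct primal segment ``along the Newton trajectory'' whose length is $\bar{\lambda}$. The paper never attempts to bound primal displacements this way. Instead it works in the dual throughout piece~(III): it uses the self-concordance of $\tilde d(\cdot,t_+)$ (not of $F$) to bound the exact Newton residual by $\norm{p}_y^2/(1-\norm{p}_y)$ with $p:=y_+-y$ (this is the analogue of \cite[Theorem~4.1.14]{Nesterov2004}, not Theorem~4.1.7), and bounds the inexact-versus-exact Hessian term $[\nabla^2\tilde d(y,t_+)-\nabla^2\tilde d_{\bar\delta}(y,t_+)]p$ by $\frac{2\delta_1-\delta_1^2}{(1-\delta_1)^2}\norm{p}_y$. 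Only \emph{after} that does it convert $\norm{p}_y$ into the quantities $\bar\lambda$ and $\Delta$: first $\norm{p}_y\le \bar\lambda_1/(1-\delta_1)$ by comparing the inexact and exact dual Hessians at $y$, and then $\bar\lambda_1\le(\bar\lambda+\Delta)/(1-\Delta)$ via a separate primal-level estimate (Lemma~\ref{le:s4_basic_estimates}(c)), which is how $\Delta$ enters at all. Composing these substitutions and simplifying is what produces the precise constants $\xi^2$ and $\delta_1[(1-\delta_1)^{-2}+2(1-\delta_1)^{-1}]\xi$.

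In short, the route to $\xi$ is dual$\to$dual$\to$primal, not primal throughout; your sketch collapses these steps and in doing so loses the mechanism by which $\bar\lambda$ legitimately enters the bound. If you replace your ``averaged Hessian $\overline G$'' treatment with the two-term split $[\overline G-\nabla^2\tilde d(y,t_+)]p + [\nabla^2\tilde d(y,t_+)-\nabla^2\tilde d_{\bar\delta}(y,t_+)]p$, apply dual self-concordance to the first term and the Hessian comparison (Lemma~\ref{le:s4_basic_estimates}(a)) to the second, and then use Lemma~\ref{le:s4_basic_estimates}(c) to pass from $\bar\lambda_1$ to $\bar\lambda,\Delta$, you will recover exactly the paper's argument. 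Your remarks on monotonicity and the exact case are correct.
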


%+ 4.3.2. Determine the maximum neighborhood.
\subsubsection{Finding the maximum centering parameter $\beta^{*}$}
The key point of the path-following algorithm  is to determine the maximum value of $\beta \in (0, \beta^{*}) \subseteq (0,1)$ and appropriate values of
$\bar{\delta}$ and $\Delta$ such that if $\bar{\lambda} \leq \beta$, then $\bar{\lambda}_{+} \leq \beta$. We analyze the estimate \eqref{eq:s4_main_estimate} to
find these parameters.

First, let  $\beta \in (0, 1)$ such that $\bar{\lambda} \leq \beta$. Since the right-hand side of \eqref{eq:s4_main_estimate} is nondecreasing with respect to
all variables, if we define
\begin{equation*}
\varphi_{\bar{\delta}}(\bar{\xi}) := \frac{1}{1-\bar{\delta}}\left\{2\bar{\delta} + \bar{\xi}^2 + \bar{\delta}[(1-\bar{\delta})^{-2} +
2(1-\bar{\delta})^{-1}]\bar{\xi}\right\},
\end{equation*}
and $\bar{\xi} := \frac{\Delta + \beta}{1-\bar{\delta} - \beta - 2\Delta}$, then $\bar{\lambda}_{+} \leq \beta$ if $\varphi_{\bar{\delta}}(\bar{\xi}) \leq
\beta$.
This condition leads to $0 \leq \bar{\xi} \leq \frac{\sqrt{p^2 + 4q} - p}{2}$ and $0\leq \bar{\delta} \leq \frac{\beta}{\beta+2}$, where $p :=
\bar{\delta}[(1-\bar{\delta})^{-2} + 2(1-\bar{\delta})^{-1}]$ and $q := (1-\bar{\delta})\beta - 2\bar{\delta}$.

Now, let $\theta := \frac{\sqrt{p^2 + 4q} - p}{2} > 0$. Since $\bar{\xi} = \frac{\beta+\Delta}{1-\bar{\delta} - \beta - 2\Delta} \leq \theta$, we have
$(1+2\theta)\Delta \leq \theta(1-\bar{\delta}-\beta) - \beta$.
Thus, in order to ensure $\Delta > 0$, we require that $\theta = \frac{\sqrt{p^2 + 4q} - p}{2} > \frac{\beta}{1-\bar{\delta} - \beta}$. This condition leads to
\begin{equation}\label{eq:s4_cubic_polynomial}
\mathcal{P}(\beta) := c_0 + c_1\beta + c_2\beta^2 + c_3\beta^3 > 0,
\end{equation}
where $c_0 := -2\bar{\delta}(1-\bar{\delta})^2 \leq 0$, $c_1 := (1-\bar{\delta})[(1+\bar{\delta})^2-p] \geq 0$,
$c_2 := p - 3 - 2\bar{\delta}^2 + 2\bar{\delta} \leq 0$ and $c_3 := 1-\bar{\delta} > 0$.
By well-known characteristics of the cubic polynomial, we know that $\mathcal{P}(\beta)$ has three real roots if $18c_0c_1c_2c_3 - 4c_2^3c_0 + c_2^2c_1^2
- 4c_3c_1^3 - 27c_3^2c_0^2 \geq 0$. By numerical solution, the last condition leads to $0 \leq \bar{\delta} \leq \bar{\delta}_{\max}$, where
$\bar{\delta}_{\max} \approx 0.0432863855$.

Finally, we summarize the above analysis into the following theorem.

%+ Theorem 4.3.
\begin{theorem}\label{th:s4_select_conditions}
Let $\bar{\delta}_{\max} = 0.0432863855$ and $0\leq \bar{\delta} \leq \bar{\delta}_{\max}$. Then $\mathcal{P}$ defined by
\eqref{eq:s4_cubic_polynomial} has
three nonnegative real roots $0 \leq \beta_{*} < \beta^{*} < \beta_3$.
Suppose that $\beta \in (\beta_{*}, \beta^{*})$ and $\bar{\Delta} := \frac{\theta(1-\bar{\delta}-\beta)-\beta}{1+2\theta} > 0$ where $\theta := \frac{\sqrt{p^2
+ 4q}-p}{2}$, and $p$ and $q$ are defined as above.
Then, for $0\leq \delta_{+} \leq \bar{\delta}$, $0\leq \delta_1 \leq \bar{\delta}$ and $0 \leq \Delta \leq \bar{\Delta}$, if $\bar{\lambda} \leq \beta$ then
$\bar{\lambda}_{+} \leq \beta$.
\end{theorem}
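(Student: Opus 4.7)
The plan is to invoke Lemma~\ref{le:s4_main_inequalities} and track when its right-hand side stays below $\beta$. First I would use the monotonicity clause of that lemma: since the bound on $\bar\lambda_+$ is nondecreasing in each of $\delta_+,\delta_1,\Delta,\bar\lambda$, it suffices to verify $\bar\lambda_+\leq\beta$ when these four variables take the largest admissible values $\bar\delta,\bar\delta,\bar\Delta,\beta$ respectively. With these substitutions the quantity $\xi$ in the lemma collapses to $\bar\xi=(\beta+\Delta)/(1-\bar\delta-\beta-2\Delta)$, and the right-hand side of \eqref{eq:s4_main_estimate} is precisely $\varphi_{\bar\delta}(\bar\xi)$, the function defined just before the theorem. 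The target inequality therefore becomes $\varphi_{\bar\delta}(\bar\xi)\leq\beta$.

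Next I would solve that inequality. Since $\varphi_{\bar\delta}(\cdot)$ is a univariate quadratic with positive leading coefficient, $\varphi_{\bar\delta}(\bar\xi)\leq\beta$ is equivalent, after multiplying through by $(1-\bar\delta)$, to $\bar\xi^2+p\bar\xi-q\leq 0$, with $p,q$ as in the excerpt. This holds for $\bar\xi\in[0,\theta]$ with $\theta=(\sqrt{p^2+4q}-p)/2$, provided $q\geq 0$, i.e.\ $\bar\delta\leq\beta/(\beta+2)$. Because the map $\Delta\mapsto\bar\xi$ is increasing on the admissible range, solving $\bar\xi\leq\theta$ for $\Delta$ yields exactly $\Delta\leq\bar\Delta=(\theta(1-\bar\delta-\beta)-\beta)/(1+2\theta)$. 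The construction then works precisely when $\bar\Delta>0$, i.e.\ $\theta(1-\bar\delta-\beta)>\beta$; squaring this and clearing denominators produces the cubic inequality $\mathcal{P}(\beta)>0$ in \eqref{eq:s4_cubic_polynomial} with the listed coefficients $c_0,\ldots,c_3$.

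The main obstacle is the analysis of the cubic $\mathcal{P}$. Since $c_0=-2\bar\delta(1-\bar\delta)^2\leq 0$ and $c_3=1-\bar\delta>0$, we have $\mathcal{P}(0)\leq 0$ and $\mathcal{P}(+\infty)=+\infty$, so $\mathcal{P}$ always admits at least one positive real root. To obtain three nonnegative real roots I would invoke the classical cubic-discriminant criterion $D(\bar\delta):=18c_0c_1c_2c_3-4c_2^3c_0+c_2^2c_1^2-4c_3c_1^3-27c_3^2c_0^2\geq 0$. Substituting the explicit coefficients turns $D$ into a concrete polynomial in $\bar\delta$ whose nonnegativity region has to be located numerically; its largest relevant root is precisely $\bar\delta_{\max}\approx 0.0432863855$. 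For $\bar\delta\in[0,\bar\delta_{\max}]$, $\mathcal{P}$ then has three real roots $0\leq\beta_*<\beta^*<\beta_3$, and a sign analysis (using $\mathcal{P}(0)\leq 0$ together with the leading behaviour) shows $\mathcal{P}(\beta)>0$ exactly on $(\beta_*,\beta^*)\cup(\beta_3,+\infty)$; the interval of interest is $(\beta_*,\beta^*)$, since the branch beyond $\beta_3$ collides with the side-constraint $\bar\delta\leq\beta/(\beta+2)$ or with $\beta<1-\bar\delta$.

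Finally, I would verify that the technical hypotheses of Lemma~\ref{le:s4_main_inequalities} are automatic on this regime: $\delta_+\leq\bar\delta<1$ is immediate, and at the worst-case values the condition $\delta_1+2\Delta+\bar\lambda<1$ reduces to $\bar\delta+2\bar\Delta+\beta<1$, which follows at once from $\bar\Delta<(1-\bar\delta-\beta)/2$, a direct rearrangement of the definition of $\bar\Delta$ using $\beta>0$. Chaining the implications of the first two paragraphs then gives $\bar\lambda_+\leq\beta$ whenever $\bar\lambda\leq\beta$, $\delta_+,\delta_1\leq\bar\delta$ and $\Delta\leq\bar\Delta$, which is exactly the claim of the theorem.
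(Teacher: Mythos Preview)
Your proposal is correct and follows essentially the same route as the paper: reduce to the worst case via the monotonicity clause of Lemma~\ref{le:s4_main_inequalities}, rewrite the resulting condition $\varphi_{\bar\delta}(\bar\xi)\leq\beta$ as a quadratic in $\bar\xi$, solve for $\Delta\leq\bar\Delta$, and then analyze the cubic $\mathcal{P}$ via the discriminant to locate the admissible interval $(\beta_*,\beta^*)$. Your write-up is in fact more complete than the paper's formal proof, which leans on the preceding discussion for most of these steps and does not explicitly verify the hypothesis $\delta_1+2\Delta+\bar\lambda<1$ of Lemma~\ref{le:s4_main_inequalities}; the paper also records the extra information $\beta^*<1<\beta_3$, which explains more directly than your side-constraint remark why the branch $(\beta_3,\infty)$ is discarded.
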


% Proof of Theorem 4.3.
\begin{proof}
Note that the cubic polynomial $\mathcal{P}(\beta)$ has three real roots if $18c_0c_1c_2c_3 - 4c_2^3c_0 + c_2^2c_1^2 - 4c_3c_1^3 - 27c_3^2c_0^2 \geq 0$.
Numerically, this condition leads to $0 \leq \bar{\delta} \leq \bar{\delta}_{\max} = 0.0432863855$. Moreover, one can show that three roots $\beta_{*} <
\beta^{*} < 1 < \beta_3$  of $\mathcal{P}$ are nonnegative and $\mathcal{P}(\beta) > 0$ if $\beta \in (\beta_{*}, \beta^{*})$. However, $\mathcal{P}(\beta) > 0$
implies $\theta(1-\bar{\delta}-\beta)-\beta > 0$, where $\theta := \frac{\sqrt{p^2 + 4q}-p}{2}$. Thus, from the definition of $\bar{\xi}$, we have $0 \leq
\Delta \leq \bar{\Delta} := \frac{\theta(1-\bar{\delta}-\beta)-\beta}{1+2\theta} > 0$.
\end{proof}
%+ End of the proof.

In order to see the values of $\beta_{*}$, $\beta^{*}$ and $\bar{\Delta}$ varying with respect to the accuracy $\bar{\delta}$, we illustrate them in Figure
\ref{fig:quad_region}, where the left-hand side shows the values of $\beta_{*}$ (solid) and $\beta^{*}$ (dash) and the right-hand side shows the value of
$\bar{\Delta}$ varying with respect to $\bar{\delta}$ when $\beta$ is chosen by $\beta := \frac{\beta_{*}+\beta^{*}}{2}$ (dash) and $\beta :=
\frac{\beta^{*}}{4}$ (solid), respectively.
\begin{figure}[ht]
\centerline{\includegraphics[angle=0,height=3.8cm,width=13.0cm]{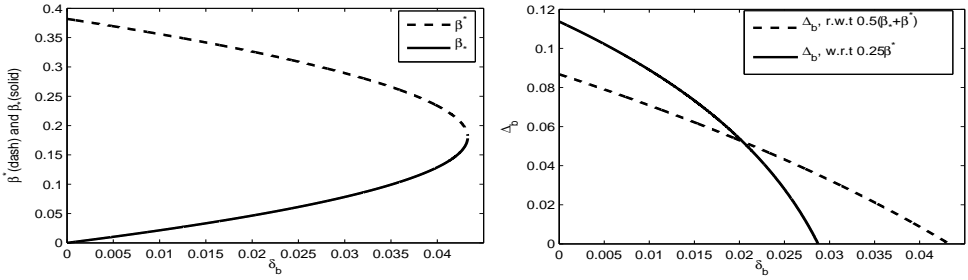}}
\caption{The values of $\beta_{*}$, $\beta^{*}$ and $\bar{\Delta}$ varying w.r.t $\bar{\delta}$.}
\label{fig:quad_region}
\end{figure}

%+ 4.3.3. The update rule of the barrier parameter.
\subsubsection{The update rule for the barrier parameter $t$}
It remains to quantify the decrement $\Delta{t}$ of the barrier parameter $t$.
From \eqref{eq:s3_optimality_for_smoothed_dual_prob} we have
\begin{eqnarray*}
c + A^Ty - t\nabla F(x^{*}) = 0 ~~\mathrm{and} ~~ c + A^Ty - t_{+}\nabla F(x^{*}_1) = 0,
\end{eqnarray*}
where $x^{*} := x^{*}(y,t)$ and $x^{*}_1 := x^{*}(y, t_{+})$ are defined as before.
Subtracting these equalities and then using $t_{+} = t - \Delta t$, we have $t_{+}[\nabla F(x^{*}_1) - \nabla F(x^{*})] = \Delta t\nabla F(x^{*})$.
Using this relation together with \cite[Theorem 4.1.7]{Nesterov2004} and $\norm{\nabla F(x^{*})}^{*}_{x^{*}}\leq \sqrt{\nu}$ (see \cite[inequality
4.2.4]{Nesterov2004}), we have
\begin{align*}
\frac{t_{+}\norm{x^{*}_1 - x^{*}}_{x^{*}}^2}{1 + \norm{x^{*}_1 - x^{*}}_{x^{*}}} &\leq t_{+}[\nabla F(x^{*}_1) - \nabla F(x^{*})]^T(x^{*}_1 - x^{*}) = \Delta
t\nabla F(x^{*})^T(x^{*}_1 - x^{*}) \nonumber\\
& \leq \Delta t\norm{\nabla F(x^{*})}^{*}_{x^{*}}\norm{x^{*}_1 - x^{*}}_{x^{*}} \leq \Delta t\sqrt{\nu}\norm{x^{*}_1 - x^{*}}_{x^{*}}. \nonumber
\end{align*}
By the definition of $\Delta^{*}$, if $t > (\sqrt{\nu} +
1)\Delta{t}$, then the above inequality leads to
\begin{equation}\label{eq:s4_choice_of_Delta_star}
\Delta^{*} \leq \bar{\Delta}^{*} := \frac{\sqrt{\nu}\Delta t}{t - (\sqrt{\nu} + 1)\Delta t}.
\end{equation}
Note that \eqref{eq:s4_choice_of_Delta_star} implies
\begin{equation}\label{eq:s4_bar_par_decrement}
\Delta{t} = \frac{\bar{\Delta}^{*}t}{\sqrt{\nu} + (\sqrt{\nu} + 1)\bar{\Delta}^{*}}.
\end{equation}
On the other hand, using the definitions of $\Delta$ and $\delta$, we have
\begin{eqnarray}\label{eq:s4_bar_par_decrement_estimate}
\Delta := \norm{\bar{x}_{\bar{\delta}1} - \bar{x}_{\bar{\delta}}}_{\bar{x}_{\bar{\delta}}}
&&\overset{\tiny\eqref{eq:s4_F_hessian_est}}{\leq}
\frac{1}{(1-\delta)}\Big[\norm{\bar{x}_{\bar{\delta}1} -
x^{*}_{1}}_{x^{*}} + \norm{x^{*}_{1} - x^{*}}_{x^{*}} + \norm{x^{*} - \bar{x}_{\bar{\delta}}}_{x^{*}}\Big] \nonumber\\
&& \leq \frac{1}{(1-\delta)}\left[\frac{\delta_{1}}{1-\Delta^{*}} + \Delta^{*} + \delta \right] \nonumber\\
[-1.5ex]\\[-1.5ex]
&& \overset{\tiny\eqref{eq:s4_choice_of_Delta_star}}{\leq} \frac{1}{(1-\delta)}\left[\frac{\delta_{1}}{1-\bar{\Delta}^{*}} + \bar{\Delta}^{*} + \delta\right]
\nonumber\\
&& \overset{\tiny{\delta, \delta_{1}\leq \bar{\delta}}}{\leq} \frac{1}{(1-\bar{\delta})}\left[\frac{\bar{\delta}}{1-\bar{\Delta}^{*}} + \bar{\Delta}^{*} +
\bar{\delta} \right].
\nonumber
\end{eqnarray}
Now, we need to find a condition such that $\Delta \leq \bar{\Delta}$, where $\bar{\Delta}$ is given in Theorem \ref{th:s4_select_conditions}. This condition
holds
if
$\frac{\bar{\delta}}{1-\Delta^{*}} + \Delta^{*} \leq (1-\bar{\delta})\bar{\Delta} - \bar{\delta}$ due to \eqref{eq:s4_bar_par_decrement_estimate}.
Since $\Delta^{*} \leq \bar{\Delta}^{*}$ due to \eqref{eq:s4_choice_of_Delta_star}, we impose a more relaxed condition
\begin{equation}\label{eq:s4_bar_par_decrement_factor}
0\leq \bar{\Delta}^{*} \leq \frac{1}{2}\left[(1-\bar{\delta})\bar{\Delta} - \bar{\delta} + 1 - \sqrt{((1-\bar{\delta})\bar{\Delta} - \bar{\delta} - 1)^2 +
4\bar{\delta}}\right],
\end{equation}
provided $\bar{\delta} \leq \frac{\bar{\Delta}}{1+\bar{\Delta}}$.
Thus, we can fix $\bar{\Delta}^{*}$ at
\begin{equation}\label{eq:s4_choice_of_Delta_bar_star}
\bar{\Delta}^{*} = \frac{1}{2}\left[(1-\bar{\delta})\bar{\Delta} - \bar{\delta} + 1 - \sqrt{((1-\bar{\delta})\bar{\Delta} - \bar{\delta} - 1)^2 +
4\bar{\delta}}\right].
\end{equation}
% Then, the tolerance $\bar{\delta}$ is chosen as
% \begin{equation}\label{eq:s4_choice_of_bar_delta_new}
% \bar{\delta} := \min\left\{ 2c_1\sqrt{\beta}, ~ 1 -\frac{1+\sqrt{1+c_2}}{c_2}, ~\frac{\beta(1 - 2c_1c_2c_3 - c_3^2}{\beta + 2},
% ~\frac{\bar{\Delta}}{\bar{\Delta} + 1} \right\}.
% \end{equation}
The update rule for the barrier parameter $t$ becomes
\begin{equation*}
t_{+} := (1 - \sigma)t = \left(1 - \frac{\bar{\Delta}^{*}}{\sqrt{\nu} + (\sqrt{\nu}+1)\bar{\Delta}^{*}}\right) = \frac{\sqrt{\nu}(\bar{\Delta}^{*} +
1)t}{\sqrt{\nu}(\bar{\Delta}^{*} + 1) +
\bar{\Delta}^{*}},
\end{equation*}
where $\sigma := \frac{\bar{\Delta}^{*}}{\sqrt{\nu} + \bar{\Delta}^{*}(\sqrt{\nu}+1)} \in (0, 1)$.

Finally, we show that the conditions given in Theorem \ref{th:s4_select_conditions}, \eqref{eq:s4_bar_par_decrement_factor} and
\eqref{eq:s4_choice_of_Delta_bar_star} are well-defined.
Indeed, let us fix $\bar{\delta} := 0.01$. Then we can compute the values of $\beta_{*}$ and $\beta^{*}$ as
\begin{equation*}
\beta_{*} \approx 0.021371 <  \beta^{*} \approx 0.356037.
\end{equation*}
Therefore, if we choose $\beta := \frac{\beta^{*}}{4} \approx 0.089009 > \beta_{*}$ then
\begin{equation*}
\bar{\Delta} \approx 0.089012,~\mathrm{and}~ \bar{\Delta}^{*} \approx 0.067399.
\end{equation*}

%+ 4.3.4. The algorithm and its convergence.
\subsubsection{The algorithm and its convergence}\label{subsubsec:algorithm}
Now, we are at the point to present the algorithm and its convergence. Before presenting the algorithm, we need to find a stopping criterion for the algorithm.
By using Lemma \ref{le:s4_basic_estimates}c), we have
\begin{eqnarray}\label{eq:s4_relations_of_lambda_bar_and_lambda}
\lambda \leq (1-\delta)^{-1}(\bar{\lambda} + \delta),
\end{eqnarray}
provided that $\delta  < 1$ and $\bar{\lambda} \leq \beta < 1$.
Consequently, if $\bar{\lambda} \leq (1-\bar{\delta})^{-1}(\beta+\bar{\delta})$ then $\lambda \leq \beta$. Let us define $\vartheta :=
(1-\bar{\delta})^{-1}(\beta+\bar{\delta})$. It follows from Lemma \ref{le:s3_dual_gap_estimate} that if $t\omega_{*}(\vartheta) \leq \varepsilon_d$ for a given
tolerance $\varepsilon_d > 0$, then $y$ is a $2\varepsilon_d$-solution of \eqref{eq:s2_original_dual_prob}.

The algorithmic framework presented in Subsection \ref{subsec:s4_alg_framework} is now described in detail as follows.

%+ Algorithm 4.1.
%%%%%%%%%%%%%%%%%%%%%%%%%%%%%%%%%%%%%%%%%%%%%%%%%%%%%%%%%%%%%%%%%%%%%%%%%
\noindent\rule[1pt]{\textwidth}{1.0pt}{~~}
\begin{algorithm}\label{alg:A1}{~}$($\textit{Path-following algorithm with $\mathrm{IPFNT}$ iterations}$)$
\end{algorithm}
\vskip -0.2cm
\noindent\rule[1pt]{\textwidth}{0.5pt}
\noindent\textbf{Initialization:} Perform the following steps:
\begin{enumerate}
\item Choose $\bar{\delta} \in [0, \bar{\delta}_{\max}]$ and compute $\beta_{*}$ and $\beta^{*}$ as the first and second roots of $\mathcal{P}$ defined by
\eqref{eq:s4_cubic_polynomial}, respectively.
\item Fix some $\beta \in (\beta_{*}, \beta^{*})$ (e.g. $\beta = \frac{1}{4}\beta^{*}$).
\item Choose an initial value $t = t_0 > 0$. %(e.g. $t_0 := \nu + 2\sqrt{\nu}$).
\end{enumerate}
\noindent\textbf{Phase 1.}
Apply Algorithm \ref{alg:A1a} presented in Subsection \ref{subsec:Phase1_for_inexact_case} to find $y^0 \in Y$ such that
$\lambda_{\tilde{d}_{\bar{\delta}}(\cdot, t_0)}(y^0) \leq \beta$.
\vskip 0.1cm
\noindent\textbf{Phase 2.}\\
\noindent\textbf{Initialization:} Perform the following steps:
\begin{enumerate}
\item Given a tolerance $\varepsilon_d > 0$.
\item Compute $\bar{\Delta}$ as in Theorem \ref{th:s4_select_conditions}. Then, compute $\bar{\Delta}^{*}$ by \eqref{eq:s4_choice_of_Delta_bar_star}.
\item Compute the factor $\sigma := \frac{\bar{\Delta}^{*}}{\sqrt{\nu} + (\sqrt{\nu} + 1)\bar{\Delta}^{*}}$.
\item Compute the accuracy factor $\gamma := \frac{\bar{\delta}}{(\nu+2\sqrt{\nu})(1+\bar{\delta})}$.
\end{enumerate}
\noindent\textbf{Iteration:} Perform the following loop.\\
\texttt{For} $k=0,1,\cdots$ \texttt{do}
\begin{enumerate}
\item If $t_k \leq \frac{\varepsilon_d}{\omega_{*}(\vartheta)}$, where $\vartheta := (1-\bar{\delta})^{-1}(\beta + \bar{\delta})$,  then terminate.
\item Compute an accuracy for the primal subproblem $\varepsilon_k := \gamma t_k$.
\item Update $t_{k+1} := (1-\sigma)t_k$.
\item Solve approximately \eqref{eq:s2_original_dual_func} \textit{in parallel} up to the given tolerance $\varepsilon_k$ to obtain
$\bar{x}_{\bar{\delta}}(y^k, t_{k+1})$.
\item Compute $\nabla{d}_{\bar{\delta}}(y^k,t_{k+1})$ and $\nabla^2d_{\bar{\delta}}(y^k, t_{k+1})$ according to \eqref{eq:s4_derivs_of_inex_sm_dual_func}.
\item Update $y^{k+1}$ as $y^{k+1} := y^k - \nabla^2{d}_{\bar{\delta}}(y^k, t_{k+1})^{-1}\nabla{d}_{\bar{\delta}}(y^k, t_{k+1})$.
\end{enumerate}
\texttt{End of For}.
\vskip-0.2cm
\noindent\rule[1pt]{\textwidth}{1.0pt}
%%%%%%%%%%%%%%%%%%%%%%%%%%%%%%%%%%%%%%%%%%%%%%%%%%%%%%%%%%%%%%%%%%%%%%%%%

The core step of Phase 2 in Algorithm \ref{alg:A1} is Step 4, where we need to solve two convex optimization problems to compute the gradient vector and the
Hessian matrix of $d_{\bar{\delta}}(\cdot, t_{k+1})$ at Step 5. These quantities require an approximate solution $\bar{x}_{\bar{\delta}}(y^k, t_{k+1})$, the
gradient vector $\nabla F(\bar{x}_{\bar{\delta}}(y^k, t_{k+1}))$ and the Hessian matrix $\nabla^2 F(\bar{x}_{\bar{\delta}}(y^k, t_{k+1}))$, which can
also be computed \textit{in parallel}. Note that Step 4 actually requires to solve a system of nonlinear equations
\eqref{eq:s3_optimality_for_smoothed_dual_prob} (see Section \ref{sec:implementation_detail}  for more details). The update rule of $t$ at Step 3 can be done in
an adaptive way, where we can use $\norm{\nabla{F}(\bar{x}_{\bar{\delta}})}_{\bar{x}_{\bar{\delta}}}^{*}$ instead of its upper bound $\sqrt{\nu}$. For example,
we can use $\Delta{t} := \frac{\bar{\Delta}^{*}t}{R_{\bar{\delta}} + (R_{\bar{\delta}}+1)\bar{\Delta}^{*}}$ instead of \eqref{eq:s4_bar_par_decrement}, where
$R_{\bar{\delta}} := (1-\bar{\delta})^{-1}\left[\bar{\delta}(1-\bar{\delta})^{-1} +
\norm{\nabla{F}(\bar{x}_{\bar{\delta}})}^{*}_{\bar{x}_{\bar{\delta}}}\right]$.
The stopping criterion at Step 1 can be replaced by $\omega_{*}(\vartheta_k)t_k \leq \varepsilon_d$, where $\vartheta_k :=
(1-\bar{\delta})^{-1}[\lambda_{\tilde{d}_{\bar{\delta}}(\cdot,t_{k})}(y^{k}) + \bar{\delta}]$ due to Lemma \ref{le:s3_dual_gap_estimate}.

Let us define $\lambda_{k+1} := \lambda_{\tilde{d}_{\bar{\delta}}(\cdot,t_{k+1})}(y^{k+1})$ and $\lambda_k :=
\lambda_{\tilde{d}_{\bar{\delta}}(\cdot,t_k)}(y^k)$.
Then the local convergence of Algorithm \ref{alg:A1} is stated in the following theorem.

%+ Theorem 4.1.
\begin{theorem}\label{th:s4_local_convergence}
Let $\{(y^k, t_k)\}$ be a sequence generated by Algorithm \ref{alg:A1}. Then the number of iterations $k_{\max}$ to obtain a $2\varepsilon_d$-solution of
\eqref{eq:s2_original_dual_prob} does not exceed
\begin{equation}\label{eq:s4_main_complexity_est}
k_{\max} := \left\lfloor \frac{\ln\left(\frac{\varepsilon_d}{t_0\omega_{*}(\vartheta)}\right)}{\ln(1-\sigma)} \right\rfloor + 1,
\end{equation}
where $\sigma = \frac{\bar{\Delta}^{*}}{\sqrt{\nu} + (\sqrt{\nu} + 1)\bar{\Delta}^{*}} \in (0, 1)$ and $\vartheta = (1-\bar{\delta})\beta - \bar{\delta} \in (0,
1)$.
\end{theorem}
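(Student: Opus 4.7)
The plan is to combine three ingredients: the invariance of the centering neighbourhood $\{\bar{\lambda} \le \beta\}$ along the iterates, the geometric decay of $t_k$, and the dual gap estimate in Lemma~\ref{le:s3_dual_gap_estimate}.

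First, I would establish by induction on $k$ that $\bar{\lambda}_k := \bar{\lambda}_{\tilde{d}_{\bar{\delta}}(\cdot,t_k)}(y^k) \le \beta$. The base case $k=0$ is produced by Phase~1. For the inductive step, the choice of $\bar{\delta}$ at Step~1 ensures $\bar{\delta}\in[0,\bar{\delta}_{\max}]$, and $\bar{\Delta}^{*}$ is fixed by \eqref{eq:s4_choice_of_Delta_bar_star} so that $\Delta \le \bar{\Delta}$ holds via \eqref{eq:s4_bar_par_decrement_estimate}; moreover the accuracy $\varepsilon_k = \gamma t_k$ together with \eqref{eq:s4_choice_tol_for_inex_case}--\eqref{eq:s4_inex_primal_optimality} guarantees $\delta_+,\delta_1\le\bar{\delta}$. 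Theorem~\ref{th:s4_select_conditions} then gives $\bar{\lambda}_{k+1}\le\beta$.

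Second, by construction of Step~3, $t_{k+1}=(1-\sigma)t_k$, hence $t_k=(1-\sigma)^k t_0$ for all $k\ge 0$. The algorithm stops as soon as $t_k \le \varepsilon_d/\omega_*(\vartheta)$, which by taking logarithms (and remembering that $\ln(1-\sigma)<0$) is equivalent to
\begin{equation*}
k \ge \frac{\ln\!\left(\frac{\varepsilon_d}{t_0\omega_*(\vartheta)}\right)}{\ln(1-\sigma)}.
\end{equation*}
Taking the smallest such integer gives exactly the bound \eqref{eq:s4_main_complexity_est}.

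Third, I must certify that the output $y^{k_{\max}}$ is a $2\varepsilon_d$-solution in the sense of \eqref{eq:s3_eps_dual_sol}. From $\bar{\lambda}_{k_{\max}}\le\beta$ and the relation \eqref{eq:s4_relations_of_lambda_bar_and_lambda}, $\lambda_{k_{\max}} := \lambda_{\tilde{d}(\cdot,t_{k_{\max}})}(y^{k_{\max}}) \le (1-\bar{\delta})^{-1}(\beta+\bar{\delta}) = \vartheta$. Lemma~\ref{le:s3_dual_gap_estimate} then yields $d(y^{k_{\max}},t_{k_{\max}})-\phi^{*} \le t_{k_{\max}}\omega_*(\vartheta)\le\varepsilon_d$, and combining with \eqref{eq:s3_eps_dual_sol} gives $|d(y^{k_{\max}},t_{k_{\max}})-\phi^{*}|\le 2\varepsilon_d$, as required.

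The main conceptual hurdle is the first step: verifying that the composite update \eqref{eq:s4_path_following_iter} preserves the centering condition in spite of the simultaneous change of $t$ and the inexactness of the primal subproblem. All the technical effort is already packaged into Theorem~\ref{th:s4_select_conditions} and the parameter choices \eqref{eq:s4_choice_of_Delta_bar_star}--\eqref{eq:s4_bar_par_decrement_factor}; one only has to check that the running constants $\delta_1,\delta_+,\Delta$ are dominated by $\bar{\delta},\bar{\delta},\bar{\Delta}$ respectively under the stopping tolerance $\varepsilon_k = \gamma t_k$. Once this bookkeeping is done, the counting argument and the gap estimate are essentially immediate.
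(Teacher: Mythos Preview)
Your proposal is correct and follows essentially the same approach as the paper. The paper's own proof is much terser—it records only the geometric decay $t_k=(1-\sigma)^k t_0$ and solves the stopping inequality $t_k\le\varepsilon_d/\omega_*(\vartheta)$, relying on the preceding discussion (Theorem~\ref{th:s4_select_conditions} and the parameter choices~\eqref{eq:s4_choice_of_Delta_bar_star}) for the invariance $\bar\lambda_k\le\beta$; you make that induction explicit, which is arguably an improvement in clarity.
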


%+ Proof of Theorem 4.1.
\begin{proof}
Note that $y^k$ is a $2\varepsilon_d$-solution of \eqref{eq:s2_original_dual_prob} if $t_k \leq \frac{\varepsilon_d}{\omega_{*}(\vartheta)}$ due to Lemma
\ref{le:s3_dual_gap_estimate}, where
$\vartheta = (1-\bar{\delta})\beta - \bar{\delta}$. Since $t_k = (1-\sigma)^kt_0$ due to Step 3, we require $(1-\sigma)^k \leq
\frac{\varepsilon_d}{t_0\omega_{*}(\vartheta)}$. Consequently, we obtain \eqref{eq:s4_main_complexity_est}.
% \Eproof
\end{proof}
%+ End of the proof.

%+ Remark 4.3.
\begin{remark}[\textbf{The worst-case complexity}]\label{eq:s4_compexity}
Since $(1-\sigma) = \left[1 + \frac{\bar{\Delta}^{*}}{\sqrt{\nu}(\bar{\Delta}^{*} + 1)}\right]^{-1}$ which implies $-\ln(1-\sigma) \sim
\frac{\bar{\Delta}^{*}}{\sqrt{\nu}(\bar{\Delta}^{*} + 1)}$.
It follows from Theorem \ref{th:s4_local_convergence}  that the complexity of Algorithm \ref{alg:A1} is $O(\sqrt{\nu}\ln\frac{t_0}{\varepsilon_d})$.
\end{remark}

%+ Remark 4.4.
\begin{remark}[\textbf{Linear convergence}]\label{re:s4_choice_of_new_bar_par_factor}
The rate of convergence of the sequence $\{t_k\}$  is linear and the contraction rate is not greater than $1-\sigma$. Note that if
$\lambda_{\tilde{d}_{\bar{\delta}}(\cdot, t)}(y) \leq \beta$, then it follows from \eqref{eq:s3_standard_sm_dual_func} that $\lambda_{d_{\bar{\delta}}(\cdot,
t})(y) \leq \beta\sqrt{t}$. Therefore, the sequence of Newton decrement $\{\lambda_{d(\cdot, t_k)}(y^k)\}_k$ of $d$ also converges linearly to zero with the
contraction factor less than or equal to $\sqrt{1 - \sigma}$.
\end{remark}

%+ Remark 4.5.
\begin{remark}[\textbf{Recovering the feasibility}]\label{re:s4_recovering_feasibility}
Since $\nabla d_{\bar{\delta}}(y, t) = A\bar{x}_{\bar{\delta}}(y,t) - b = t\nabla\tilde{d}_{\bar{\delta}}(y, t)$, we have $\dnorm{A\bar{x}_{\bar{\delta}}(y,t) -
b}_{y}^{*} = t\dnorm{\nabla\tilde{d}_{\bar{\delta}}(y, t)} = t\bar{\lambda} \leq t\beta$. If we define the inexact feasibility gap at
$\bar{x}_{\bar{\delta}}(y,t)$ as
\begin{equation*}
\mathcal{\bar{G}}_{\mathrm{feas}}(y,t) := \dnorm{A\bar{x}_{\bar{\delta}}(y,t) - b}_{y}^{*},
\end{equation*}
then $\mathcal{\bar{G}}_{\mathrm{feas}}(y,t) \leq t\beta$, which shows that $\mathcal{\bar{G}}_{\mathrm{feas}}(y,t)$ converges linearly to zero with the same
rate as $t$.
\end{remark}

%+ Remark 4.6. 
\begin{remark}[\textbf{The inexactness in the IPFNT direction \eqref{eq:s4_path_following_iter}}]\label{re:s4_inexact_linear_algebra}
Note that we can apply an inexact method to solve the linear system \eqref{eq:s4_path_following_iter}. Under appropriate assumptions of the inexact term, we
can still prove the convergence of the algorithm. For more detail on inexact Newton methods, one can refer to the reference \cite{Wei2011}.
\end{remark}

%////////////////////////////////////////////////////////////////////////////////////////////////////////////////////////////////////
%+ 4.5. Finding starting point.
%////////////////////////////////////////////////////////////////////////////////////////////////////////////////////////////////////
\subsection{Phase 1 - Finding a starting point}\label{subsec:Phase1_for_inexact_case}
Phase 1 of the algorithmic framework aims to find $y^0\in Y$ such that $\lambda_{\tilde{d}_{\bar{\delta}}(\cdot, t)}(y^0) \leq \beta$. In this subsection, we
consider an inexact perturbed damped Newton (IPDNT) method for finding such a point $y^0$.

%+ 4.5.1. Inexact perturbed damped Newton iteration.
\subsubsection{Inexact perturbed damped Newton iteration}
Let us fix $t = t_0 > 0$ and choose an accuracy $\bar{\delta}\geq 0$. We assume that the current iterate $y\in Y$ is given, and we compute the next iterate
$y_{+}$ by applying the IPDNT iteration to  $d_{\bar{\delta}}(\cdot, t_0)$ as \begin{equation}\label{eq:s5_inexact_damped_NT_iter}
y_{+} := y - \alpha(y)\nabla^2d_{\bar{\delta}}(y,t_0)^{-1}\nabla d_{\bar{\delta}}(y,t_0),
\end{equation}
where $\alpha := \alpha(y) > 0$ is the step size which will  be chosen appropriately. Note that since \eqref{eq:s5_inexact_damped_NT_iter} is invariant under
linear transformation, it is equivalent to
\begin{equation}\label{eq:s5_inexact_damped_NT_invariant}
y_{+} := y - \alpha(y)\nabla^2\tilde{d}_{\bar{\delta}}(y,t_0)^{-1}\nabla\tilde{d}_{\bar{\delta}}(y,t_0),
\end{equation}
It follows from \eqref{eq:s3_standard_sm_dual_func} that $\tilde{d}(\cdot,t_0)$ is standard self-concordant, by \cite[Theorem 4.1.8]{Nesterov2004}, we have
\begin{equation}\label{eq:s5_main_inequality_1}
\tilde{d}(y_{+},t_0) \leq \tilde{d}(y, t_0) + \nabla\tilde{d}(y, t_0)^T(y_{+} - y) + \omega_{*}(\norm{y_{+}-y}_y),
\end{equation}
provided that $\norm{y_{+}-y}_y < 1$.
On the other hand, from \eqref{eq:s4_bound_on_sm_dual_and_inex_dual_funs}, it implies that
\begin{equation}\label{eq:s5_main_inequality_2}
0 \leq \omega(\delta(\bar{x}_{\bar{\delta}},x^{*})) \leq \tilde{d}(y,t_0) - \tilde{d}_{\bar{\delta}}(y,t_0) \leq
\omega_{*}(\delta(\bar{x}_{\bar{\delta}},x^{*})),
\end{equation}
which is an approximation between $\tilde{d}(\cdot,t_0)$ and $\tilde{d}_{\bar{\delta}}(\cdot, t_0)$.
In order to analyze the convergence of the IPDNT iteration \eqref{eq:s5_inexact_damped_NT_iter} we denote by
\begin{eqnarray}\label{eq:s5_short_notation}
&&\hat{\delta}_{+} := \norm{\bar{x}_{\bar{\delta}}(y_{+}, t_0) - x^{*}(y_{+},t_0)}_{x^{*}(y_{+},t_0)}, \nonumber\\
&&\hat{\delta} := \norm{\bar{x}_{\bar{\delta}}(y,t_0) - x^{*}(y,t_0)}_{x^{*}(y,t_0)}, \\
&&\bar{\lambda}_0 := \lambda_{\tilde{d}_{\bar{\delta}}(\cdot,t_0)}(y) = \alpha(y)\dnorm{y_{+} - y}_y, \nonumber
\end{eqnarray}
the solution differences of $d(\cdot,t_0)$ and $d_{\bar{\delta}}(\cdot, t_0)$ and the Newton decrement of $\tilde{d}_{\bar{\delta}}(\cdot, t_0)$, respectively.

%+ 4.5.2. Determining the step size.
\subsubsection{Finding the step size $\alpha(y)$}
Now, we find an appropriate step size $\alpha(y) \in (0, 1]$ such that the sequence generated by \eqref{eq:s5_inexact_damped_NT_invariant} converges to $y^0$.
Let $p := y_{+} - y$. From \eqref{eq:s5_main_inequality_1} and \eqref{eq:s5_main_inequality_2}, we have
\begin{align}\label{eq:s5_main_inequality_3}
\tilde{d}_{\bar{\delta}}(y_{+}, t_0) &\overset{\tiny{\eqref{eq:s5_main_inequality_1}}}{\leq} \tilde{d}(y_{+}, t_0)
\overset{\tiny{\eqref{eq:s5_main_inequality_2}}}{\leq} \tilde{d}(y, t_0) + \nabla\tilde{d}(y, t_0)^T(y_{+}-y) + \omega_{*}(\norm{y_{+}-y}_y) \nonumber\\
&\overset{\tiny{\eqref{eq:s5_main_inequality_1}}}{\leq} \tilde{d}_{\bar{\delta}}(y, t_0) + \nabla\tilde{d}(y, t_0)^T(y_{+}-y) + \omega_{*}(\norm{y_{+}-y}_y) +
\omega_{*}(\hat{\delta})
\nonumber\\
&=\tilde{d}_{\bar{\delta}}(y, t_0) \!+\! \nabla\tilde{d}_{\bar{\delta}}(y, t_0)^T\!p + \big[\nabla\tilde{d}(y, t_0) \!-\! \nabla\tilde{d}_{\bar{\delta}}(y,
t_0)\big]^T\!\!p  + \omega_{*}(\norm{p}_y) + \omega_{*}(\hat{\delta}) \\
&\overset{\tiny{\eqref{eq:s5_inexact_damped_NT_iter}}}{\leq} \tilde{d}_{\bar{\delta}}(y, t_0) -\alpha\bar{\lambda}_0^2 + \norm{\nabla\tilde{d}(y, t_0)
-\nabla\tilde{d}_{\bar{\delta}}(y, t_0)}_{y}^{*}\norm{p}_y  + \omega_{*}(\norm{p}_y) + \omega_{*}(\hat{\delta}) \nonumber\\
&\overset{\tiny{\eqref{eq:s4_basic_est_on_diff_grads_of_dual_fun}}}{\leq} \tilde{d}_{\bar{\delta}}(y, t_0) -\alpha\bar{\lambda}_0^2 + \hat{\delta}\norm{p}_y +
\omega_{*}(\norm{p}_y) + \omega_{*}(\hat{\delta}) \nonumber.
\end{align}
Furthermore, from \eqref{eq:s4_basic_est_on_Hessian_of_F} and the definition of $\nabla^2\tilde{d}$ and $\nabla^2\tilde{d}_{\bar{\delta}}$, we have
\begin{equation*}
(1-\hat{\delta})\nabla^2\tilde{d}_{\bar{\delta}}(y,t_0) \preceq \nabla^2\tilde{d}(y,t_0) \preceq (1-\hat{\delta})^{-2}\nabla^2\tilde{d}_{\bar{\delta}}(y,t_0).
\end{equation*}
This inequality implies that
\begin{equation*}
(1-\hat{\delta})\dnorm{p}_y \leq \norm{p}_y \leq (1-\hat{\delta})^{-1}\dnorm{p}_y.
\end{equation*}
Combining this inequality, \eqref{eq:s5_inexact_damped_NT_invariant} and the definition of $\bar{\lambda}_0$ in \eqref{eq:s5_short_notation} we get
\begin{equation*}
\alpha(1-\hat{\delta})\bar{\lambda}_0 \leq \norm{p}_y \leq \alpha(1-\hat{\delta})^{-1}\bar{\lambda}_0.
\end{equation*}
Let us assume that $\alpha\bar{\lambda}_0 + \hat{\delta} < 1$. By substituting the right-hand side of this inequality  into \eqref{eq:s5_main_inequality_3} and
observing  that the right hand side of \eqref{eq:s5_main_inequality_3} is nondecreasing with respect to $\norm{p}_y$, we get
\begin{align}\label{eq:s5_main_inequality_4}
\tilde{d}_{\bar{\delta}}(y_{+},t_0) \leq \tilde{d}_{\bar{\delta}}(y,t_0) -\alpha\bar{\lambda}_0^2 + \frac{\alpha\bar{\lambda}\hat{\delta}}{1-\hat{\delta}} +
\omega_{*}\left(\frac{\alpha\bar{\lambda}_0}{1-\hat{\delta}}\right) + \omega_{*}(\hat{\delta}).
\end{align}
Now, let us simplify the last terms of \eqref{eq:s5_main_inequality_4} which we denote by $T$ as follows.
\begin{align}\label{eq:s5_main_inequality_5}
T &:= -\alpha\bar{\lambda}_0^2 + \frac{\alpha\bar{\lambda}_0\hat{\delta}}{1-\hat{\delta}} + \omega_{*}\left(\frac{\alpha\bar{\lambda}_0}{1-\hat{\delta}}\right)
+ \omega_{*}(\hat{\delta}) \nonumber\\
& = -\alpha\bar{\lambda}_0^2 + \frac{\alpha\bar{\lambda}_0\hat{\delta}}{1-\hat{\delta}} - \frac{\alpha\bar{\lambda}_0}{1-\hat{\delta}} -
\ln\left(1-\frac{\alpha\bar{\lambda}_0}{1-\hat{\delta}}\right)
- \hat{\delta} - \ln(1-\hat{\delta}) \nonumber\\
& = -\alpha\bar{\lambda}_0^2 - (\alpha\bar{\lambda}_0 + \hat{\delta}) - \ln\left[1-(\alpha\bar{\lambda}_0 + \hat{\delta})\right] \\
& = -\alpha\bar{\lambda}_0^2 + \omega_{*}(\alpha\bar{\lambda}_0 + \hat{\delta}). \nonumber
\end{align}
Suppose that we can choose $\eta > 0$ such that $\alpha\bar{\lambda}_0^2 - \omega_{*}(\alpha\bar{\lambda}_0 + \hat{\delta}) = \omega(\eta)$.
This requirement leads to $\alpha\bar{\lambda}_0^2 = (\alpha\bar{\lambda}_0 + \hat{\delta})\left[\alpha(\bar{\lambda}_0 + \bar{\lambda}_0) +
\hat{\delta}\right]$ which is equivalent to
\begin{equation}\label{eq:s5_main_inequality_6}
\alpha = \frac{(1-\hat{\delta})\bar{\lambda}_0 - 2\hat{\delta} + \sqrt{(1-\hat{\delta})^2\bar{\lambda}_0^2 -
4\hat{\delta}\bar{\lambda}_0}}{2\bar{\lambda}_0(1+\bar{\lambda}_0)},
\end{equation}
provided that $0 \leq \hat{\delta} < \bar{\hat{\delta}} := \frac{2 + \bar{\lambda}_0 - 2\sqrt{1+\bar{\lambda}_0}}{\bar{\lambda}_0}$.
Consequently, we deduce
\begin{equation}\label{eq:s5_main_inequality_7}
\eta = \frac{\bar{\lambda}_0\left[(1-\hat{\delta})\bar{\lambda}_0 - 2\hat{\delta} + \sqrt{(1-\hat{\delta})^2\bar{\lambda}_0^2 -
4\hat{\delta}\bar{\lambda}_0}\right]}{(1 + \hat{\delta})\bar{\lambda}_0
+ \sqrt{(1 - \hat{\delta})^2\bar{\lambda}_0^2 - 4\hat{\delta}\bar{\lambda}_0}}.
\end{equation}
Note that if $\hat{\delta} = 0$, then $\alpha = \frac{1}{1+\bar{\lambda}_0}$ and $\eta = \bar{\lambda}_0$. The IPDNT iteration
\eqref{eq:s5_inexact_damped_NT_iter} becomes the exact damped Newton iteration as in \cite{Nesterov2004}.

We assume that $\bar{\lambda}_0 \geq \beta$ for a given $\beta \in (0, 1)$. Let us fix $\bar{\hat{\delta}}$ such that
\begin{equation}\label{eq:s5_choice_of_delta_bar}
0 < \bar{\hat{\delta}} < \hat{\delta}^{*} := \frac{2+\beta - 2\sqrt{1+\beta}}{\beta} = \frac{\beta}{2+\beta+2\sqrt{1+\beta}}.
\end{equation}
Next, we choose step size $\alpha$ as
\begin{equation}\label{eq:s5_choice_of_step_size}
\alpha(y) := \frac{(1-\bar{\hat\delta})\bar{\lambda}_0 - 2\bar{\hat\delta} + \sqrt{(1-\bar{\hat\delta})^2\bar{\lambda}_0^2 -
4\bar{\hat\delta}\bar{\lambda}_0}}{2\bar{\lambda}_0(1 + \bar{\lambda}_0)} \in (0, 1).
\end{equation}
Then the IPDNT iteration \eqref{eq:s5_inexact_damped_NT_iter} with $\alpha(y)$ given as \eqref{eq:s5_choice_of_step_size} generated a new point $y_{+}$ such
that
\begin{align}\label{eq:sec_descent_direction}
\tilde{d}_{\bar{\delta}}(y_{+}, t_0) \leq \tilde{d}_{\bar{\delta}}(y, t_0) - \omega(\underline{\eta}),
\end{align}
where
\begin{equation}\label{eq:s5_descent_decrement}
\underline{\eta} := \frac{\beta\left[(1-\bar{\hat\delta})\beta - 2\bar{\hat\delta} + \sqrt{(1-\bar{\hat\delta})^2\beta^2 -
4\bar{\hat\delta}\beta}\right]}{(1+\bar{\hat\delta})\beta + \sqrt{(1-\bar{\hat\delta})^2\beta^2 -
4\bar{\hat\delta}\beta}} \in (0, 1).
\end{equation}
Finally, let us estimate the constant $\underline{\eta}$ for the case $\beta \approx 0.089009$.
We first obtain $\hat{\delta}^{*} \approx 0.02131$. Let $\bar{\hat\delta} = \frac{1}{2}\hat{\delta}^{*} \approx 0.010657$. Then we get
$\underline{\eta} \approx 0.0754963$. Consequently, $\omega(\underline{\eta}) \approx 0.003002$.

%+ 4.5.3. The algorithm and its worst-case complexity.
\subsubsection{The algorithm and its worst-case complexity}
In summary, the algorithm for finding $y^0 \in Y$ is presented in detail as follows.

%+ Algorithm 4.1.a
%%%%%%%%%%%%%%%%%%%%%%%%%%%%%%%%%%%%%%%%%%%%%%%%%%%%%%%%%%%%%%%%%%%%%%%%%
\noindent\rule[1pt]{\textwidth}{1.0pt}{~~}
\begin{algorithm}\label{alg:A1a}{~}$($\textit{Finding a starting point $y^0 \in Y$}$)$
\end{algorithm}
\vskip -0.2cm
\noindent\rule[1pt]{\textwidth}{0.5pt}
\noindent\textbf{Initialization:} Perform the following steps:
\begin{enumerate}
\item Input $\beta\in (\beta_{*}, \beta^{*})$ and $t_0 > 0$ as desired (e.g. $\beta = \frac{1}{4}\beta^{*} \approx 0.089009$).
\item Take an arbitrary point $y^{0,0}\in Y$.
\item Compute $\hat{\delta}^{*} := \frac{\beta}{2+\beta+2\sqrt{1+\beta}}$ and fixed $\bar{\hat\delta} \in (0,\hat{\delta}^{*})$ (e.g. $\bar{\hat\delta} =
0.5\hat{\delta}^{*}$).
\item Compute an accuracy $\varepsilon_p := \frac{t_0\bar{\hat\delta}}{2(\nu+2\sqrt{\nu})(1+\bar{\hat\delta})}$.
\end{enumerate}
\noindent\textbf{Iteration: } Perform the following loop.\\
\texttt{For} $j=0,1,\cdots$ \texttt{do}
\begin{enumerate}
\item Solve approximately the primal subproblem \eqref{eq:s3_smoothed_dual_func} \textit{in parallel} up to the accuracy
$\varepsilon_p$ to obtain  $\bar{x}_{\bar{\delta}}(y^{0,j},t_0)$.
\item Compute $\bar{\lambda}_j := \bar{\lambda}_{\tilde{d}_{\bar{\delta}}(\cdot, t_0)}(y^{0,j})$ .
\item If $\bar{\lambda}_j \leq \beta$ then set $y^0 := y^{0,j}$ and terminate.
\item Update $y^{0,j+1}$ as $y^{0,j+1} := y^{0,j} - \alpha_j\nabla^2{d}_{\bar{\delta}}(y^{0,j}, t_0)^{-1}\nabla{d}_{\bar{\delta}}(y^{0,j}, t_0)$,
where $\alpha_j \in (0, 1]$ is computed by
\begin{equation*}
\alpha_j := \frac{(1-\bar{\hat\delta})\bar{\lambda}_j - 2\bar{\hat\delta} + \sqrt{(1-\bar{\hat\delta})^2\bar{\lambda}^2_j -
4\bar{\hat\delta}\bar{\lambda}_j}}{2\bar{\lambda}_j(1 + \bar{\lambda}_j)}.
\end{equation*}
\end{enumerate}
\texttt{End of For}.\\
% \noindent\textbf{Output:} $\bar{y} = y^j$ with $\bar{\lambda}_j \leq \beta$.
\vskip-0.3cm
\noindent\rule[1pt]{\textwidth}{1.0pt}
%%%%%%%%%%%%%%%%%%%%%%%%%%%%%%%%%%%%%%%%%%%%%%%%%%%%%%%%%%%%%%%%%%%%%%%%%
The convergence of this algorithm is stated in the following theorem.

%+ Theorem 5.1.
\begin{theorem}\label{th:s5_Phase1_convergence}
The number of iterations to terminate Algorithm \ref{alg:A1a} does not exceed
\begin{equation}\label{eq:s5_Phase1_complexity}
J_{\max} := \left\lfloor\frac{d_{\bar{\delta}}(y^{0,0}, t_0) - d^{*}(t_0) + \omega_{*}(\bar{\hat\delta})}{t_0\omega(\underline{\eta})}\right\rfloor + 1,
\end{equation}
where $d^{*}(t_0) = \displaystyle\min_{y\in Y}d(y,t_0)$ and $\underline{\eta}$ is given by \eqref{eq:s5_descent_decrement}.
\end{theorem}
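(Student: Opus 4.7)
The plan is to combine the per-iteration descent property \eqref{eq:sec_descent_direction} with a uniform lower bound on $\tilde{d}_{\bar{\delta}}(\cdot,t_0)$ that follows from the inexactness estimate \eqref{eq:s4_bound_on_sm_dual_and_inex_dual_funs} together with the definition of $d^{*}(t_0)$.

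First, I would verify that on every non-terminating iteration the hypotheses of the analysis in the previous subsection are in force: the tolerance $\varepsilon_p = \frac{t_0\bar{\hat\delta}}{2(\nu+2\sqrt{\nu})(1+\bar{\hat\delta})}$ chosen in the initialization, combined with \eqref{eq:s3_condition_of_x_delta} and \eqref{eq:s4_inex_primal_optimality}, guarantees $\hat{\delta}_j \leq \bar{\hat\delta} < \hat{\delta}^{*}$ so that the step length $\alpha_j \in (0,1)$ is well defined and, since $\bar{\lambda}_j > \beta$ whenever the stopping test fails, the descent bound \eqref{eq:sec_descent_direction} applies with the uniform decrement $\omega(\underline{\eta}) > 0$ from \eqref{eq:s5_descent_decrement}.

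Second, I would telescope \eqref{eq:sec_descent_direction} over $j=0,\dots,J-1$ to get
\begin{equation*}
\tilde{d}_{\bar{\delta}}(y^{0,J},t_0)\leq \tilde{d}_{\bar{\delta}}(y^{0,0},t_0) - J\,\omega(\underline{\eta}).
\end{equation*}
For a matching lower bound on the left-hand side, I apply \eqref{eq:s4_bound_on_sm_dual_and_inex_dual_funs} at $y=y^{0,J}$ with $\delta=\hat{\delta}_J\leq \bar{\hat\delta}$, yielding $\tilde{d}(y^{0,J},t_0)-\tilde{d}_{\bar{\delta}}(y^{0,J},t_0)\leq \omega_{*}(\bar{\hat\delta})$. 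Since $\tilde{d}(y^{0,J},t_0)\geq \tilde{d}^{*}(t_0)=d^{*}(t_0)/t_0$, this gives
\begin{equation*}
\tilde{d}_{\bar{\delta}}(y^{0,J},t_0)\geq \frac{d^{*}(t_0)}{t_0} - \omega_{*}(\bar{\hat\delta}).
\end{equation*}

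Finally, combining the two inequalities and using $\tilde{d}_{\bar{\delta}}(y^{0,0},t_0)=d_{\bar{\delta}}(y^{0,0},t_0)/t_0$ yields
\begin{equation*}
J\,\omega(\underline{\eta}) \leq \frac{d_{\bar{\delta}}(y^{0,0},t_0)-d^{*}(t_0)}{t_0} + \omega_{*}(\bar{\hat\delta}),
\end{equation*}
from which solving for $J$ and taking the floor produces \eqref{eq:s5_Phase1_complexity}. The main obstacle is not the telescoping itself, which is routine, but the bookkeeping that certifies, at every step prior to termination, both the validity of \eqref{eq:sec_descent_direction} (requiring $\bar{\lambda}_j \geq \beta$, the step size formula \eqref{eq:s5_choice_of_step_size}, and the tolerance $\bar{\hat\delta}<\hat\delta^*$) and the applicability of the self-concordance-based primal gap bound \eqref{eq:s4_bound_on_sm_dual_and_inex_dual_funs}, i.e.\ that $\bar{x}_{\bar{\delta}}(y^{0,j},t_0)\in\int(X)$ and $\hat\delta_j<1$; these follow from the choice of $\varepsilon_p$ in the initialization of Algorithm \ref{alg:A1a}.
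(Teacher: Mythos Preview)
Your proposal is correct and follows essentially the same route as the paper's proof: telescope the per-step descent \eqref{eq:sec_descent_direction}, bound $\tilde{d}_{\bar{\delta}}(y^{0,J},t_0)$ from below via \eqref{eq:s5_main_inequality_2} (equivalently \eqref{eq:s4_bound_on_sm_dual_and_inex_dual_funs}) and $\tilde{d}(\cdot,t_0)\geq \tilde{d}^{*}(t_0)$, and solve for the iteration count. The only difference is that you spell out the bookkeeping (that $\hat{\delta}_j\leq\bar{\hat\delta}$ and $\bar{\lambda}_j>\beta$ on non-terminating steps) which the paper leaves implicit.
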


%+ Proof of Theorem 5.1.
\begin{proof}
Summing up \eqref{eq:sec_descent_direction} from $j=0$ to $j = k$ and then using  \eqref{eq:s5_main_inequality_2} we have
\begin{align*}
0 &\leq \tilde{d}(y^{0,k},t_0) - \tilde{d}^{*}(t_0) \leq \tilde{d}_{\bar{\delta}}(y^{0,k},t_0) + \omega_{*}(\bar{\hat\delta}) - \tilde{d}^{*}(t_0) \nonumber\\
& \leq \tilde{d}_{\bar{\delta}}(y^{0,0},t_0) + \omega_{*}(\bar{\hat\delta}) - \tilde{d}^{*}(t_0) - k\omega(\underline{\eta}).
\end{align*}
This inequality together with \eqref{eq:s3_standard_sm_dual_func} and \eqref{eq:s4_standard_inx_sm_dual_func} imply
\begin{align*}
k \leq \frac{d_{\bar{\delta}}(y^{0,0}, t_0) - d^{*}(t_0) + \omega_{*}(\bar{\hat\delta})}{t_0\omega(\underline{\eta})}.
\end{align*}
Hence, the maximum number of iterations in Algorithm \ref{alg:A1a} does not exceed $J_{\max}$ defined by \eqref{eq:s5_Phase1_complexity}.
%\Eproof
\end{proof}
%+ End of the proof.

Since $d^{*}(t_0)$ is not available, the number $J_{\max}$ in \eqref{eq:s5_Phase1_complexity} only gives an upper bound for Algorithm \ref{alg:A1a}. However, in
this algorithm, we do not use  $J_{\max}$ as a stopping criterion.

%*******************************************************************************
%+ 4.6. The proof of technical lemmas.
%*******************************************************************************
\subsection{The proof of Lemma \ref{le:s4_main_inequalities}}\label{subsec:proofs_of_technical_lemmas}
First, we prove the following lemma which will be used to prove the main inequality in Lemma \ref{le:s4_main_inequalities}.

%+ Lemma 4.1.
\begin{lemma}\label{le:s4_basic_estimates}
Suppose that Assumptions \aref{as:A1} and \aref{as:A2} are satisfied. Then
\begin{itemize}
\item[$\mathrm{a)}$] $\nabla^2\tilde{d}$ and $\nabla^2\tilde{d}_{\bar{\delta}}$ defined by \eqref{eq:s3_deriv_of_sm_dual_func} and
\eqref{eq:s4_derivs_of_inex_sm_dual_func}, respectively, guarantee
\begin{eqnarray}\label{eq:s4_basic_est_on_Hessian_of_dual_fun}
(1-\delta_{+})^2\nabla^2\tilde{d}(y_{+},t_{+}) \preceq \nabla^2\tilde{d}_{\bar{\delta}}(y_{+},t_{+})  \preceq (1-\delta_{+})^{-2}\nabla^2\tilde{d}(y_{+},t_{+}),
\nonumber
\end{eqnarray}
where $\delta_{+} < 1$ defined by \eqref{eq:s4_sol_distance}.

\item[$\mathrm{b)}$] Moreover, one has
\begin{equation}\label{eq:s4_basic_est_on_diff_grads_of_dual_fun}
\norm{\nabla\tilde{d}_{\bar{\delta}}(y,t) - \nabla\tilde{d}(y,t)}^{*}_{y} \leq \norm{\bar{x}_{\bar{\delta}} - x^{*}}_{x^{*}}.
\end{equation}

\item[$\mathrm{c)}$] If $\Delta < 1$ then
\begin{equation}\label{eq:s4_basic_est_on_NT_decrements}
\bar{\lambda}_1 \leq \frac{\Delta + \bar{\lambda}}{1-\Delta}.
\end{equation}
\end{itemize}
\end{lemma}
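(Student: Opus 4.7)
\textbf{Proof proposal for Lemma~\ref{le:s4_basic_estimates}.} The plan is to treat the three parts sequentially, each time exploiting the self-concordance of the barrier $F$ together with the explicit forms $\nabla^2\tilde{d}(y,t)=\frac{1}{t^2}A\nabla^2F(x^{*}(y,t))^{-1}A^T$ and $\nabla^2\tilde{d}_{\bar{\delta}}(y,t)=\frac{1}{t^2}A\nabla^2F(\bar{x}_{\bar{\delta}}(y,t))^{-1}A^T$, so that the only thing distinguishing the two Hessians (and the two gradients) is the primal point at which $\nabla F$ and its Hessian are evaluated.

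For part (a), I would first invoke the classical self-concordance bound $(1-\delta_{+})^2\nabla^2F(x^{*}(y_{+},t_{+}))\preceq\nabla^2F(\bar{x}_{\bar{\delta}}(y_{+},t_{+}))\preceq(1-\delta_{+})^{-2}\nabla^2F(x^{*}(y_{+},t_{+}))$ (valid since $\delta_{+}<1$). Inversion reverses the order, and conjugating by $A$ (left) and $A^T$ (right) and then dividing by $t_{+}^2$ is an order-preserving congruence that preserves positive definiteness since $A$ is full row rank. Reading off the definitions of $\nabla^2\tilde{d}$ and $\nabla^2\tilde{d}_{\bar{\delta}}$ yields the claim. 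For part (b), I would write $\nabla\tilde{d}_{\bar{\delta}}(y,t)-\nabla\tilde{d}(y,t)=\frac{1}{t}A(\bar{x}_{\bar{\delta}}-x^{*})$ from Lemma~\ref{le:s3_derivs_of_sm_dual_func} and \eqref{eq:s4_derivs_of_inex_sm_dual_func}. The square of the dual norm with respect to $\nabla^2\tilde{d}(y,t)=\frac{1}{t^2}A\nabla^2F(x^{*})^{-1}A^T$ collapses to $(\bar{x}_{\bar{\delta}}-x^{*})^TA^T(A\nabla^2F(x^{*})^{-1}A^T)^{-1}A(\bar{x}_{\bar{\delta}}-x^{*})$. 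The claim then reduces to the operator inequality $A^T(AH^{-1}A^T)^{-1}A\preceq H$ for $H=\nabla^2F(x^{*})$, which is standard: after the congruence $M:=H^{-1/2}A^T$, the middle factor becomes the orthogonal projector $M(M^TM)^{-1}M^T\preceq I$, and conjugating back with $H^{1/2}$ gives the inequality.

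For part (c), the key observation is that $\bar{\lambda}_{1}$ and $\bar{\lambda}$ differ only through the primal iterate $\bar{x}_{\bar{\delta}}(y,\cdot)$, whose shift in the local norm at $\bar{x}_{\bar{\delta}}(y,t)$ equals $\Delta$. I would apply self-concordance of $F$ at $\bar{x}_{\bar{\delta}}(y,t)$ to obtain $\nabla^2F(\bar{x}_{\bar{\delta}}(y,t_{+}))^{-1}\succeq(1-\Delta)^2\nabla^2F(\bar{x}_{\bar{\delta}}(y,t))^{-1}$, then sandwich by $A,A^T$ and invert to bridge the two middle matrices with a factor $(1-\Delta)^{-2}$. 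This gives $\bar{\lambda}_{1}\le(1-\Delta)^{-1}\|A\bar{x}_{\bar{\delta}}(y,t_{+})-b\|_{(A\nabla^2F(\bar{x}_{\bar{\delta}}(y,t))^{-1}A^T)^{-1}}$. Splitting $A\bar{x}_{\bar{\delta}}(y,t_{+})-b=\bigl[A\bar{x}_{\bar{\delta}}(y,t)-b\bigr]+A\bigl[\bar{x}_{\bar{\delta}}(y,t_{+})-\bar{x}_{\bar{\delta}}(y,t)\bigr]$ and applying the triangle inequality, the first summand contributes exactly $\bar{\lambda}$, while the second summand, by the projection inequality invoked in part (b) (now centered at $\bar{x}_{\bar{\delta}}(y,t)$), is bounded by $\|\bar{x}_{\bar{\delta}}(y,t_{+})-\bar{x}_{\bar{\delta}}(y,t)\|_{\bar{x}_{\bar{\delta}}(y,t)}=\Delta$. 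Combining yields $\bar{\lambda}_{1}\le(\Delta+\bar{\lambda})/(1-\Delta)$.

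The main obstacle I expect is the bookkeeping in part (c): the self-concordance bridge for the Hessian and the triangle-inequality split of the gradient must be carried out with respect to the \emph{same} reference point $\bar{x}_{\bar{\delta}}(y,t)$ in order for the factor $(1-\Delta)^{-1}$ (not $(1-\Delta)^{-2}$) to appear and for the projection inequality of part (b) to be reusable verbatim. Parts (a) and (b) are textbook self-concordance manipulations, but aligning their outputs so that part (c) comes out cleanly with the stated constant is the delicate piece.
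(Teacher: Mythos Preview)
Your proposal is correct and follows essentially the same route as the paper: part~(a) is the self-concordance Hessian sandwich plus the order-preserving map $H^{-1}\mapsto AH^{-1}A^T$; part~(b) reduces to $A^T(AH^{-1}A^T)^{-1}A\preceq H$, which the paper proves via a Schur-complement/positive-semidefiniteness argument rather than your orthogonal-projector formulation, but the content is identical; and part~(c) in the paper is exactly your self-concordance bridge of the middle matrix followed by the triangle split and the projection bound, only phrased with an auxiliary feasible point $\hat{x}$ (so that $A\bar{x}_{\bar{\delta}1}-b=A(\bar{x}_{\bar{\delta}1}-\hat{x})$) instead of working with $A\bar{x}-b$ directly. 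Your bookkeeping concern about centering everything at $\bar{x}_{\bar{\delta}}(y,t)$ is precisely the care the paper takes.
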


%+ 4.3.1. Proof of Lemma 4.1.
\begin{proof}
Since $F$ is standard self-concordant, for any $z\in W^0(x, 1)$, it follows from \cite[Theorem 4.1.6]{Nesterov2004} that
\begin{equation}\label{eq:s4_F_hessian_est}
(1 - \norm{z-x}_x)^2\nabla^2F(x) \preceq \nabla^2F(z) \preceq \frac{1}{(1 - \norm{z-x}_x)^2}\nabla^2F(x).
\end{equation}
Since $\nabla^2F(x)$ is symmetric positive definite, by applying \cite[Proposition 8.6.6]{Bernstein2005} to two matrices $\frac{1}{(1 -
\norm{z-x}_x)^2}\nabla^2F(x)$ and $\nabla^2F(z)$, and then
to two matrices $(1 - \norm{z-x}_x)^2\nabla^2F(x)$ and $\nabla^2F(z)$ we obtain
\begin{eqnarray}\label{eq:s4_basic_est_on_Hessian_of_F}
(1 - \norm{z-x}_x)^2A\nabla^2F(x)^{-1}A^T &&\preceq A\nabla^2F(z)^{-1}A^T \nonumber\\
[-1.5ex]\\[-1.5ex]
&&\preceq (1-\norm{z-x}_x)^{-2}A\nabla^2F(x)^{-1}A^T. \nonumber
\end{eqnarray}
Using again \cite[Proposition 8.6.6]{Bernstein2005} for \eqref{eq:s4_basic_est_on_Hessian_of_F} we get
\begin{eqnarray}\label{eq:s4_basic_est_on_Hessian_of_F_2}
&&(1 - \norm{z-x}_x)^2A^T[A\nabla^2F(x)^{-1}A^T]^{-1}A \preceq A^T[A\nabla^2F(z)^{-1}A^T]^{-1}A \nonumber\\
[-1.5ex]\\[-1.5ex]
&& \preceq (1-\norm{z-x}_x)^{-2}A^T[A\nabla^2F(x)^{-1}A^T]^{-1}A. \nonumber
\end{eqnarray}
Now, using \eqref{eq:s3_deriv_of_sm_dual_func} and \eqref{eq:s3_standard_sm_dual_func}, we have $\nabla^2\tilde{d}(y,t) =
\frac{1}{t^2}A\nabla^2F(x^{*})^{-1}A^T$. Alternatively, using \eqref{eq:s4_derivs_of_inex_sm_dual_func} and \eqref{eq:s4_standard_inx_sm_dual_func}, we get
$\nabla^2\tilde{d}_{\bar{\delta}}(y,t) =
\frac{1}{t^2}A\nabla^2F(\bar{x}_{\bar{\delta}})^{-1}A^T$.
Substituting these relations with $x = x^{*}_{+}$ and $z = \bar{x}_{\bar{\delta}+}$ into
\eqref{eq:s4_basic_est_on_Hessian_of_F} and noting that $\delta_{+} = \delta(\bar{x}_{+},x^{*}_{+})$ defined by \eqref{eq:s4_sol_distance}, we
obtain \eqref{eq:s4_basic_est_on_Hessian_of_dual_fun}.

Next, we prove b). For any $x\in\dom(F)$, the Hessian matrix $\nabla^2F(x)$ is symmetric positive definite. Let us define
\begin{equation*}
M(x) := \begin{bmatrix}\nabla^2F(x) & A^T\\ A & A\nabla^2F(x)^{-1}A^T\end{bmatrix}.
\end{equation*}
First, we show that $M(x)$ is positive definite. Indeed, for any $z=(u,v)\in\mathbb{R}^n\times\mathbb{R}^m$,
we have
\begin{eqnarray*}
z^TM(x)z &&= u^T\nabla^2F(x)u + u^TA^Tv + v^TAu + v^TA\nabla^2F(x)^{-1}A^Tv \nonumber\\
&& = \norm{\nabla^2F(x)^{1/2}u}^2 + 2(\nabla^2F(x)^{1/2}u)^T(\nabla^2F(x)^{-1/2}A^Tv) + \norm{\nabla^2F(x)^{-1/2}A^Tv}^2 \nonumber\\
&& = \norm{\nabla^2F(x)^{1/2}u + \nabla^2F(x)^{-1/2}A^Tv}^2 \geq 0, \nonumber
\end{eqnarray*}
which shows that $M(x) \succeq 0$.
Now, since $A$ is full-row rank, $A\nabla^2F(x)^{-1}A^T$ is also symmetric positive definite. By applying Schur's complement to $M(x)$ \cite{Bernstein2005}, we
obtain
\begin{equation}\label{eq:s4_basic_est_on_Hessian_of_F_3}
A^T[A\nabla^2F(x)^{-1}A^T]^{-1}A \preceq \nabla^2F(x).
\end{equation}
To prove \eqref{eq:s4_basic_est_on_diff_grads_of_dual_fun} we note that $\nabla{d}_{\bar{\delta}}(y,t) - \nabla{d}(y,t) = A(\bar{x}_{\bar{\delta}} - x^{*})$.
Thus $\nabla\tilde{d}_{\bar{\delta}}(y,t)
-\nabla\tilde{d}(y,t) = \frac{1}{t}A(\bar{x}_{\bar{\delta}} - x^{*})$. This implies
\begin{align*}
\left[\norm{\nabla\tilde{d}_{\bar{\delta}}(y,t) - \nabla\tilde{d}(y,t)}_{y}^{*}\right]^2 &= \frac{1}{t^2}(\bar{x}_{\bar{\delta}} -
x^{*})^TA^T\nabla^2\tilde{d}(y,t)^{-1}A(\bar{x}_{\bar{\delta}} -
x^{*}) \\
& \overset{\tiny\eqref{eq:s3_deriv_of_sm_dual_func},\eqref{eq:s3_standard_sm_dual_func}}{=}
(\bar{x}_{\bar{\delta}} - x^{*})^TA^T[A\nabla^2F(x^{*})^{-1}A^T]^{-1}A(\bar{x}_{\bar{\delta}} - x^{*}) \\
&\overset{\tiny\eqref{eq:s4_basic_est_on_Hessian_of_F_3}}{\leq} (\bar{x}_{\bar{\delta}} - x^{*})^T\nabla^2F(x^{*})(\bar{x}_{\bar{\delta}} - x^{*}) \\
&= \norm{\bar{x}_{\bar{\delta}} - x^{*}}_{x^{*}}^2,
\end{align*}
which is indeed \eqref{eq:s4_basic_est_on_diff_grads_of_dual_fun}.

Finally, we prove \eqref{eq:s4_basic_est_on_NT_decrements}.
By using the definitions of $\nabla\tilde{d}_{\bar{\delta}}(\cdot,t_{+})$ and $\nabla^2\tilde{d}_{\bar{\delta}}(\cdot,t_{+})$ in
\eqref{eq:s4_derivs_of_inex_sm_dual_func}, of $\tilde{d}_{\bar{\delta}}(\cdot,t_{+})$ in
\eqref{eq:s4_standard_inx_sm_dual_func}, for any feasible point $\hat{x}$ of \eqref{eq:s2_sep_CP2}, it follows from the definition of $\bar{\lambda}_1$ in
\eqref{eq:s4_inex_dual_Newton_decrement} and $A\hat{x} = b$ that
\begin{eqnarray}\label{eq:s4_proof1_est1}
\bar{\lambda}_1^2 && = \left[\dnorm{\nabla\tilde{d}_{\bar{\delta}}(y,t_{+})}^{*}_y \right]^2 \nonumber\\
&& \overset{\tiny\eqref{eq:s4_inex_dual_Newton_decrement}}{=}
\nabla\tilde{d}_{\bar{\delta}}(y,t_{+})\nabla^2\tilde{d}_{\bar{\delta}}(y,t_{+})^{-1}\nabla\tilde{d}_{\bar{\delta}}(y,t_{+}) \nonumber\\
[-1.5ex]\\[-1.5ex]
&& \overset{\tiny\eqref{eq:s4_standard_inx_sm_dual_func}}{=}
\frac{1}{t_{+}}\nabla{d}_{\bar{\delta}}(y,t_{+})\nabla^2{d}_{\bar{\delta}}(y,t_{+})^{-1}\nabla{d}_{\bar{\delta}}(y,t_{+}) \nonumber\\
&& \overset{\tiny\eqref{eq:s4_derivs_of_inex_sm_dual_func}}{=} (\bar{x}_{\bar{\delta}1} -
\hat{x})^TA^T\left[A\nabla^2F(\bar{x}_{\bar{\delta}1})^{-1}A^T\right]^{-1}A(\bar{x}_{\bar{\delta}1} - \hat{x}).
\nonumber
\end{eqnarray}
Since $\Delta = \norm{\bar{x}_{\bar{\delta}1} - \bar{x}_{\bar{\delta}}}_{\bar{x}_{\bar{\delta}}} < 1$ by assumption, it implies that
$\bar{x}_{\bar{\delta}1} \in W^0(\bar{x}_{\bar{\delta}},1)$.
Applying the right-hand side of \eqref{eq:s4_basic_est_on_Hessian_of_F_2} with $x = \bar{x}_{\bar{\delta}}$ and $z  = \bar{x}_{\bar{\delta}1}$, it implies that
\begin{eqnarray}\label{eq:s4_proof1_est2}
\bar{\lambda}_1^2 \leq \frac{1}{(1 - \Delta)^2}(\bar{x}_{\bar{\delta}1} -
\hat{x})^TA^T\left[A\nabla^2F(\bar{x}_{\bar{\delta}})^{-1}{\!\!\!}A^T\right]^{-1}{\!\!\!\!}A(\bar{x}_{\bar{\delta}1} - \hat{x}).
\end{eqnarray}
Now, for any symmetric positive semidefinite matrix $Q$ in $\mathbb{R}^{n\times n}$ and $u, v\in\mathbb{R}^n$, one can easily show that
\begin{align}\label{eq:s4_proof1_est3}
% (u+v)^TQ(u+v) &= u^TQu + v^TQv + 2u^TQv %\nonumber\\
% & = \norm{Q^{1/2}u}^2 + 2(Q^{1/2}u)^T(Q^{1/2}v) + \norm{Q^{1/2}v}^2 \nonumber\\
% & \leq \norm{Q^{1/2}u}^2 + 2\norm{Q^{1/2}u}\norm{Q^{1/2}v} + \norm{Q^{1/2}v}^2 \\
% & = \left[ \norm{Q^{1/2}u} + \norm{Q^{1/2}v} \right]^2 \nonumber\\
% & = \left[\sqrt{u^TQu} + \sqrt{v^TQv}\right]^2. \nonumber
(u + v)^TQ(u + v) \leq \left[\sqrt{u^TQu} + \sqrt{v^TQv}\right]^2. 
\end{align}
Since $H_{\bar{\delta}} := A^T\left[A\nabla^2F(\bar{x}_{\bar{\delta}})^{-1}A^T\right]^{-1}A$ is symmetric positive semidefinite,
applying \eqref{eq:s4_proof1_est3} with $Q = H_{\bar{\delta}}$, $u = \bar{x}_{\bar{\delta}1} - \bar{x}_{\bar{\delta}}$ and $v = \bar{x}_{\bar{\delta}} -
\hat{x}$, we have
\begin{eqnarray}\label{eq:s4_proof1_est4}
\bar{\lambda}_1^2 \leq \frac{1}{(1 \!-\! \Delta)^2}\Big\{ \!\left[(\bar{x}_{\bar{\delta}1} \!-\!
\bar{x}_{\bar{\delta}})^T\!H_{\bar{\delta}}(\bar{x}_{\bar{\delta}1} \!-\! \bar{x}_{\bar{\delta}})\right]^{1/2}  \!+\! \left[(\bar{x}_{\bar{\delta}} \!-\!
\hat{x})^T\!H_{\bar{\delta}}(\bar{x}_{\bar{\delta}} \!-\! \hat{x})\right]^{1/2} \!\!\Big\}^2.
\end{eqnarray}
Note that $H_{\bar{\delta}} \preceq \nabla^2F(\bar{x}_{\bar{\delta}})$ due to \eqref{eq:s4_basic_est_on_Hessian_of_F_3}. The first term of the right-hand side
of
\eqref{eq:s4_proof1_est4} satisfies
\begin{equation}\label{eq:s4_proof1_est5}
[\cdots] \leq  (\bar{x}_{\bar{\delta}+} - \bar{x}_{\bar{\delta}})^T\nabla^2F(\bar{x}_{\bar{\delta}})(\bar{x}_{\bar{\delta}1} - \bar{x}_{\bar{\delta}}) =
\Delta^2.
\end{equation}
On the other hand, by substituting $\bar{x}_{\bar{\delta}1}$ by $\bar{x}_{\bar{\delta}}$ into \eqref{eq:s4_proof1_est1}, we get
\begin{eqnarray}\label{eq:s4_proof1_est6}
\bar{\lambda}^2 =  (\bar{x}_{\bar{\delta}} \!-\! \hat{x})^TA^T\!\left[A\nabla^2F(\bar{x}_{\bar{\delta}})^{-1}A^T\right]^{-1}A(\bar{x}_{\bar{\delta}} \!-\!
\hat{x}) \!=\! (\bar{x}_{\bar{\delta}} \!-\! \hat{x})^TH_{\bar{\delta}}(\bar{x}_{\bar{\delta}} \!-\! \hat{x}).
\end{eqnarray}
Combining \eqref{eq:s4_proof1_est4}, \eqref{eq:s4_proof1_est5} and \eqref{eq:s4_proof1_est6}, we obtain
\begin{equation*}
\bar{\lambda}_1^2 \leq \frac{(\Delta + \bar{\lambda})^2}{(1-\Delta)^2},
\end{equation*}
which is equivalent to \eqref{eq:s4_basic_est_on_NT_decrements}.
%\Eproof
\end{proof}
%+ End of the proof.

%+ 4.6.b. The proof of Lemma 4.2.
\paragraph{The proof of Lemma \ref{le:s4_main_inequalities}}
Since $\delta_1 + 2\Delta + \bar{\lambda} < 1$, it implies that $\delta_1 < 1$, $\Delta < 1/2$ and $\bar{\lambda} < 1$.
The proof of Lemma \ref{le:s4_main_inequalities} is divided into several steps as follows.

%+ Step 1.
\vskip0.1cm
\noindent\textit{Step 1.}
First, we prove the following inequality:
\begin{align}\label{eq:s4_proof2_step1}
\bar{\lambda}_{+} \leq \frac{1}{(1-\delta_{+})}\left\{\delta_{+} +  \frac{1}{(1-\norm{p}_y)}\left[\delta_1 + \frac{(2\delta_1 - \delta_1^2)}{(1 -
\delta_1)^2}\norm{p}_y +
\frac{\norm{p}_y^2}{1-\norm{p}_y}
\right]\right\},
\end{align}
where $p := y_{+} - y$.
Indeed, it follows \eqref{eq:s4_basic_est_on_Hessian_of_dual_fun} that
\begin{align}\label{eq:s4_proof2_est2}
\bar{\lambda}_{+} &= \dnorm{\nabla\tilde{d}_{\bar{\delta}}(y_{+},t_{+})}_{y_{+}}^{*} =
\left[\nabla\tilde{d}_{\bar{\delta}}(y_{+},t_{+})\nabla^2\tilde{d}_{\bar{\delta}}(y_{+},t_{+})^{-1}\nabla\tilde{d}_{\bar{\delta}}(y_{+},t_{+})\right]^{1/2}
\nonumber\\
& \overset{\tiny\eqref{eq:s4_basic_est_on_Hessian_of_dual_fun}}{\leq}
\frac{1}{(1-\delta_{+})}\left[\nabla\tilde{d}_{\bar{\delta}}(y_{+},t_{+})\nabla^2\tilde{d}(y_{+},
t_{+})^{-1}\nabla\tilde{d}_{\bar{\delta}}(y_{+},t_{+})\right]^{1/2} \\
&\leq \frac{1}{(1-\delta_{+})}\norm{\nabla\tilde{d}_{\bar{\delta}}(y_{+},t_{+})}_{y_{+}}^{*}. \nonumber
\end{align}
Next, using \eqref{eq:s4_basic_est_on_diff_grads_of_dual_fun} we have
\begin{eqnarray}\label{eq:s4_proof2_est3}
\norm{\nabla\tilde{d}_{\bar{\delta}}(y_{+},t_{+})}_{y_{+}}^{*} &&\leq \norm{\nabla\tilde{d}(y_{+},t_{+})}_{y_{+}}^{*}  +
\norm{\nabla\tilde{d}_{\bar{\delta}}(y_{+},t_{+}) -
\nabla\tilde{d}(y_{+},t_{+})}_{y_{+}}^{*}  \nonumber\\
[-1.5ex]\\[-1.5ex]
&& \overset{\tiny\eqref{eq:s4_basic_est_on_diff_grads_of_dual_fun}}{\leq} \norm{\nabla\tilde{d}(y_{+},t_{+})}_{y_{+}}^{*} + \delta_{+}. \nonumber
\end{eqnarray}
Since $\tilde{d}(\cdot, t_{+})$ is standard self-concordant according to Lemma \ref{le:s3_self_concordancy}, one has
\begin{eqnarray}\label{eq:s4_proof2_est4}
\norm{\nabla\tilde{d}(y_{+},t_{+})}_{y_{+}}^{*} &&\leq \frac{1}{1-\norm{y_{+}-y}_y}\norm{\nabla\tilde{d}(y_{+},t_{+})}_{y}^{*} \nonumber \\
[-1.5ex]\\[-1.5ex]
&& = \frac{1}{1-\norm{p}_y}\norm{\nabla\tilde{d}(y_{+},t_{+})}_{y}^{*}. \nonumber
\end{eqnarray}
Plugging \eqref{eq:s4_proof2_est4} and \eqref{eq:s4_proof2_est3} into \eqref{eq:s4_proof2_est2} we obtain
\begin{equation}\label{eq:s4_proof2_est5}
\bar{\lambda}_{+} \leq \frac{1}{(1-\delta_{+})}\left[\frac{\norm{\nabla\tilde{d}(y_{+}, t_{+})}_{y}^{*}}{1-\norm{p}_y} + \delta_{+}\right].
\end{equation}
On the other hand, from \eqref{eq:s4_inexact_fs_NT_invariant}, we have
\begin{eqnarray}\label{eq:s4_proof2_est6}
\nabla\tilde{d}(y_{+},t_{+}) &&\overset{\tiny\eqref{eq:s4_inexact_fs_NT_invariant}}{=} \nabla\tilde{d}(y_{+}, t_{+}) - \left[\nabla\tilde{d}_{\bar{\delta}}(y,
t_{+}) +
\nabla^2\tilde{d}_{\bar{\delta}}(y,t_{+})(y_{+}-y)\right] \nonumber\\
&& = \left[\nabla\tilde{d}(y,t_{+}) - \nabla\tilde{d}_{\bar{\delta}}(y,t_{+}) \right] \nonumber\\
[-1.5ex]\\[-1.5ex]
&& + \left\{ [\nabla^2\tilde{d}(y,t_{+})-\nabla^2\tilde{d}_{\bar{\delta}}(y,t_{+})](y_{+}-y) \right\} \nonumber\\
&& + \left[\nabla\tilde{d}(y_{+},t_{+}) - \nabla\tilde{d}(y,t_{+}) - \nabla^2\tilde{d}(y,t_{+})(y_{+}-y)\right]. \nonumber
\end{eqnarray}
By substituting $t$ by $t_{+}$ in \eqref{eq:s4_basic_est_on_diff_grads_of_dual_fun}, we obtain an estimate of the first term of \eqref{eq:s4_proof2_est6} as
\begin{align}\label{eq:s4_proof2_est7}
\norm{\nabla\tilde{d}(y,t_{+}) - \nabla\tilde{d}_{\bar{\delta}}(y, t_{+})}_{y}^{*} \leq \norm{\bar{x}_{\bar{\delta}1} - x^{*}_1}_{x^{*}_1} = \delta_1.
\end{align}
Next, we consider the second term of \eqref{eq:s4_proof2_est6}. It follows from \eqref{eq:s4_basic_est_on_Hessian_of_dual_fun} that
\begin{eqnarray}\label{eq:s4_proof2_est8}
\left[(1-\delta_1)^2 - 1\right]\nabla^2\tilde{d}(y,t_{+}) &&\preceq \nabla^2\tilde{d}_{\bar{\delta}}(y,t_{+}) - \nabla^2\tilde{d}(y,t_{+}) \nonumber\\
[-1.5ex]\\[-1.5ex]
&&\preceq \left[(1-\delta_1)^{-2} - 1\right]\nabla^2\tilde{d}(y,t_{+}). \nonumber
\end{eqnarray}
If we define $G := \left[\nabla^2\tilde{d}_{\bar{\delta}}(y,t_{+}) - \nabla^2\tilde{d}(y,t_{+})\right]$ and $H :=
\nabla^2\tilde{d}(y,t_{+})^{-1/2}G\nabla^2\tilde{d}(y,t_{+})^{-1/2}$
then
\begin{align}\label{eq:s4_proof2_est9}
\norm{[\nabla^2\tilde{d}(y,t)-\nabla^2\tilde{d}_{\bar{\delta}}(y,t_{+})](y_{+}-y)}_{y}^{*} & = \norm{Gp}_y^{*} \leq \norm{H}\norm{p}_{y},
\end{align}
where, by virtue of \eqref{eq:s4_proof2_est8} and the condition $\delta_1 < 1$, one has
\begin{equation*}
\norm{H} \leq \max\left\{1-(1-\delta_1)^2, \frac{1}{(1-\delta_1)^2}-1\right\} = \frac{2\delta_1 - \delta_1^2}{(1-\delta_1)^2}.
\end{equation*}
Hence, \eqref{eq:s4_proof2_est9} leads to
\begin{align}\label{eq:s4_proof2_est10}
\norm{[\nabla^2\tilde{d}(y,t)-\nabla^2\tilde{d}_{\bar{\delta}}(y,t_{+})](y_{+}-y)}_{y}^{*} \leq \frac{(2\delta_1 -\delta_1^2)}{(1-\delta_1)^2}\norm{p}_{y}.
\end{align}
Furthermore, since $\tilde{d}(\cdot, t)$ is standard self-concordant, similar to the proof of \cite[Theorem 4.1.14]{Nesterov2004}, we have
\begin{align}\label{eq:s4_proof2_est11}
\norm{\nabla\tilde{d}(y_{+},t_{+}) - \nabla\tilde{d}(y,t_{+}) - \nabla^2\tilde{d}(y,t_{+})(y_{+}-y)}_{y}^{*} \leq \frac{\norm{p}_y^2}{1-\norm{p}_y}.
\end{align}
Now, we apply the triangle inequality $\norm{a+b+c}^{*}_y \leq \norm{a}_y^{*} + \norm{b}_y^{*} + \norm{c}_y^{*}$ to \eqref{eq:s4_proof2_est6} and then plugging
\eqref{eq:s4_proof2_est7},
\eqref{eq:s4_proof2_est10} and \eqref{eq:s4_proof2_est11} into the resulted inequality to obtain
\begin{align*}
\norm{\nabla\tilde{d}_{\bar{\delta}}(y_{+},t_{+})}_y^{*} &\leq \delta_1 + \frac{(2\delta_1 - \delta_1^2)}{(1-\delta_1)^2}\norm{p}_y +
\frac{\norm{p}_y^2}{1-\norm{p}_y}.
\end{align*}
Finally, by substituting this inequality into \eqref{eq:s4_proof2_est5} we get \eqref{eq:s4_proof2_step1}.

%+ Step 2.
\vskip0.1cm
\noindent\textit{Step 2.} Next, we estimate \eqref{eq:s4_proof2_step1} in terms of $\bar{\lambda}_1$ to obtain
\begin{eqnarray}\label{eq:s4_proof2_est1b}
\bar{\lambda}_{+} \leq \frac{1}{(1 \!-\! \delta_{+})}\!\left[\!\left(\!\frac{\bar{\lambda}_1}{1 \!-\! \delta_1 \!-\! \bar{\lambda}_1}\!\right)^2 \!+\!
\frac{(2\delta_1 \!-\! \delta^2)}{(1 \!-\! \delta_1)^2}\!\!\left(\!\frac{\bar{\lambda}_1}{1 \!-\! \delta_1 \!-\! \bar{\lambda}_1}\!\right)
\!+\! \frac{(1 \!-\! \delta_1)\delta_1}{1 \!-\! \delta_1 \!-\! \bar{\lambda}_1} \!+\!
\delta_{+} \! \right].
\end{eqnarray}
Indeed, by using \eqref{eq:s4_basic_est_on_Hessian_of_F} with $x = \bar{x}_{\bar{\delta}1}$ and $z = x^{*}_1$ and then \eqref{eq:s3_deriv_of_sm_dual_func} we
have
\begin{equation*}
(1-\delta_1)^2\nabla^2\tilde{d}_{\bar{\delta}}(y, t_{+}) \preceq \nabla^2\tilde{d}(y, t_{+}) \preceq (1-\delta_1)^{-2}\nabla^2\tilde{d}_{\bar{\delta}}(y,
t_{+}).
\end{equation*}
This inequality together with the definition of $\dnorm{\cdot}$ imply
\begin{align*}
(1-\delta_1)\dnorm{p}_y \leq \norm{p}_y = \left[p^T\nabla^2d(y,t_{+})p\right]^{1/2} \leq (1-\delta_1)^{-1}\dnorm{p}_y.
\end{align*}
Moreover, since $\dnorm{p}_y = \dnorm{\nabla\tilde{d}_{\bar{\delta}}(y, t_{+})}^{*}_y = \bar{\lambda}_1$ due to \eqref{eq:s4_inexact_fs_NT_invariant}, the last
inequality is equivalent to
\begin{equation}\label{eq:s4_proof2_est12}
\norm{p}_y \leq \frac{\bar{\lambda}_1}{1-\delta_1}.
\end{equation}
Note that the right-hand side of \eqref{eq:s4_proof2_step1} is nondecreasing w.r.t. $\norm{p}_y$ in $[0,1)$. Substituting \eqref{eq:s4_proof2_est12}
into \eqref{eq:s4_proof2_step1} we finally obtain \eqref{eq:s4_proof2_est1b}.

%+ Step 3.
\vskip0.1cm
\noindent\textit{Step 3.} We further estimate \eqref{eq:s4_proof2_est1b} in terms of $\Delta$ and $\bar{\lambda}$.
First, we can easily check that the right-hand side of \eqref{eq:s4_proof2_est1b} is nondecreasing with respect to $\bar{\lambda}_1$, $\delta_1$ and
$\delta_{+}$.
Now, by using the definitions of $\Delta$ and $\bar{\lambda}$, it follows from Lemma \ref{le:s4_basic_estimates} c) that
\begin{equation*}\label{eq:s4_proof2_est16a}
\bar{\lambda}_1 \leq \frac{\bar{\lambda} + \Delta}{1-\Delta}.
\end{equation*}
Since $\delta_{+} < 1$ and $\delta_1 + 2\Delta + \bar{\lambda} < 1$, substituting this inequality into \eqref{eq:s4_proof2_est1b}, we obtain
\begin{eqnarray}\label{eq:s4_proof2_est15}
\bar{\lambda}_{+} &&\leq \frac{1}{(1 - \delta_{+})}\left[\delta_{+} + \left(\frac{\bar{\lambda} + \Delta}{1 - \delta_1 - 2\Delta - \bar{\lambda}}\right)^2 +
\frac{(2\delta_1 - \delta_1^2)}{(1-\delta_1)^2}\left(\frac{\bar{\lambda} + \Delta}{1-\delta_1 - 2\Delta - \bar{\lambda}}\right) \right. \nonumber\\
[-1.5ex]\\[-1.5ex]
&&\left. + \frac{\delta_1(1 - \delta_1)(1-\Delta)}{1-\delta_1 - 2\Delta - \bar{\lambda}}\right]. \nonumber
\end{eqnarray}
The right-hand side of \eqref{eq:s4_proof2_est15} is well-defined and also nondecreasing with respect to all variables.

%+ Step 4.
\vskip0.1cm
\noindent\textit{Step 4.}
Finally, we facilitate the right-hand side of \eqref{eq:s4_proof2_est15} to obtain \eqref{eq:s4_main_estimate}.
Since $\bar{\lambda} \geq 0$, we have
\begin{align*}
(1-\delta_1)(1-\Delta) &= 1 - \Delta - \delta_1 + \delta_1\Delta  = [1 - \delta_1 - 2\Delta - \bar{\lambda}] + (\bar{\lambda} + \Delta) + \delta_1\Delta\\
&\leq   [1 - \delta_1 - 2\Delta - \bar{\lambda}] + (\bar{\lambda}_1 + \Delta) + \delta_1(\Delta + \bar{\lambda})\\
& = [1 - \delta_1 - 2\Delta - \bar{\lambda}] + (1 + \delta_1)(\bar{\lambda} + \Delta).
\end{align*}
Therefore, this inequality implies
\begin{equation}\label{eq:s4_proof2_est16}
\frac{\delta_1(1-\delta_1)(1-\Delta)}{1-\delta_1 - 2\Delta - \bar{\lambda}} \leq \delta_1 + \delta_1(1 + \delta_1)\left[\frac{\Delta + \bar{\lambda}}{1 -
\delta_1 - 2\Delta - \bar{\lambda}}\right].
\end{equation}
Alternatively, since $0\leq \delta_1 < 1$, we have $1 + \delta_1 \leq \frac{1}{1-\delta_1}$. Thus
\begin{align*}
\frac{2\delta_1 - \delta_1^2)}{(1-\delta_1)^2} + \delta_1(1 + \delta_1) &= \delta_1\left[\frac{1}{(1-\delta_1)^2} + \frac{1}{(1-\delta_1)} + (1 + \delta_1)
\right] \\
& \leq \delta_1\left[\frac{1}{(1-\delta_1)^2} + \frac{2}{1-\delta_1}\right].
\end{align*}
Substituting  inequality \eqref{eq:s4_proof2_est16} into \eqref{eq:s4_proof2_est15} and then using the last inequality and $\xi :=
\frac{\bar{\lambda}+\Delta}{1-\delta_1 - 2\Delta - \bar{\lambda}}$, we obtain  \eqref{eq:s4_main_estimate}.

%+ Step 5.
\vskip0.1cm
\noindent\textit{Step 5.}
The nondecrease of the right-hand side of \eqref{eq:s4_main_estimate} is obvious.
The inequality \eqref{eq:s4_main_estimate_for_exact_case} follows directly from \eqref{eq:s4_main_estimate} by noting that $\bar{\lambda} \equiv \lambda$ and
$\bar{x}_{\bar{\delta}} \equiv
x^{*}$.
\Eproof
%+ End of the proof of Lemma 4.2.

%////////////////////////////////////////////////////////////////////////////////////////////////////////////////////////////////////
%+ 5. Exact path-following algorithm.
%////////////////////////////////////////////////////////////////////////////////////////////////////////////////////////////////////
\section{Path-following decomposition algorithm with exact Newton iterations}\label{sec:path_following_for_exact_case}
In Algorithm \ref{as:A1}, if we set $\bar{\delta} = 0$, then this algorithm collapses to the ones considered  in
\cite{Kojima1993,Necoara2009,Shida2008,Zhao2001,Zhao2005}. However, we emphasize the following points.
\begin{enumerate}
\item We consider this variant as a special case of the algorithm presented in the previous sections which is called \textit{path-following decomposition
algorithm with exact Newton iterations}.
\item In \cite{Kojima1993,Necoara2009,Shida2008,Zhao2001,Zhao2005}, since the primal subproblem \eqref{eq:s3_smoothed_dual_func} is solved exactly, the family
$\{d(\cdot, t)\}_{t > 0}$ of the smooth dual functions is strongly self-concordant due to Legendre transformation.
Consequently, the standard theory of interior point methods in \cite{Nesterov1994} can be applied to minimize this function.
In contrast to those, in this paper we analyze directly the path-following iterations to select appropriate parameters for implementation.
\end{enumerate}
Note that the radius of the neighbourhood of the analytic center in Algorithm \ref{alg:B1} below is $\beta^{*} = \frac{1}{2}(3-\sqrt{5}) \approx 0.381966$
compared to the one used in literature,
$\beta^{*} = 2-\sqrt{3} \approx 0.26795$.

%+ 5.1. Analyzing the exact path-following iterations.
\subsection{Analyzing the exact path-following iteration}\label{subsec:s5_analyzing_exact_PPIter}
Let us assume that the primal subproblem \eqref{eq:s3_smoothed_dual_func} is solved exactly, i.e. $\bar{\delta} = 0$.
Then, we have $\bar{x}_{\bar{\delta}} \equiv x^{*}$ and $\delta(\bar{x}_{\bar{\delta}}, x^{*}) = 0$ for all $y\in Y$ and $t > 0$. Moreover, it follows from
\eqref{eq:s4_choice_of_Delta_star} that $\Delta = \Delta^{*} = \norm{x^{*}(y, t_{+}) - x^{*}(y,t)}_{x^{*}(y,t)}$.
We consider one step of the path-following scheme with exact full-step Newton iterations:
\begin{equation}\label{eq:s6_exact_path_following_iter}
\begin{cases}
t_{+} := t - \Delta t, ~\Delta t > 0,\\
y_{+} := y - \nabla^2d(y,t_{+})^{-1}\nabla{d}(y, t_{+}) \equiv y - \nabla^2\tilde{d}(y,t_{+})^{-1}\nabla\tilde{d}(y, t_{+}).
\end{cases}
\end{equation}
For sake of notation simplicity, we denote by $\tilde{\lambda} := \lambda_{\tilde{d}(\cdot, t)}(y)$, $\tilde{\lambda}_1 := \lambda_{\tilde{d}(\cdot, t_{+})}(y)$
and $\tilde{\lambda}_{+} :=
\lambda_{\tilde{d}(\cdot, t_{+})}(y_{+})$. It follows from \eqref{eq:s4_main_estimate_for_exact_case} of Lemma \ref{le:s4_main_inequalities} that
\begin{equation}\label{eq:s6_exact_main_estimate}
\tilde{\lambda}_{+} \leq \left(\frac{\tilde{\lambda} + \Delta^{*}}{1 - 2\Delta^{*} -\tilde{\lambda}}\right)^2.
\end{equation}
Now, we fix $\beta \in (0, 1)$ such that $\tilde{\lambda} \leq \beta$. We need to find a condition on $\Delta$ such that $\tilde{\lambda}_{+} \leq \beta$.
Indeed, since the right-hand side of \eqref{eq:s6_exact_main_estimate} is nondecreasing with respect to $\tilde{\lambda}$, it implies that
$\tilde{\lambda}_{+} \leq \left(\frac{\Delta^{*} + \beta}{1-2\Delta^{*} - \beta}\right)^2$.
Thus $\tilde{\lambda}_{+} \leq \beta$ if $\frac{\Delta^{*} + \beta}{1-2\Delta^{*} - \beta} \leq \sqrt{\beta}$ which leads to
\begin{equation}\label{eq:s6_exact_choice_of_Delta}
0\leq \Delta^{*} \leq \bar{\Delta}^{*} := \frac{\sqrt{\beta}(1-\sqrt{\beta} - \beta)}{1+2\sqrt{\beta}},
\end{equation}
provided that
\begin{equation}\label{eq:s6_exact_choice_of_beta}
0 < \beta < \beta^{*} := \frac{3-\sqrt{5}}{2} \approx 0.381966.
\end{equation}
Since $\Delta \equiv \Delta^{*}$, according to \eqref{eq:s4_bar_par_decrement}, we can choose
\begin{equation}\label{eq:s6_exact_bar_par_decrement}
\Delta t := \sigma t = \frac{\bar{\Delta}^{*}t}{\sqrt{\nu} + (\sqrt{\nu} + 1)\bar{\Delta}^{*}},
\end{equation}
where $\sigma := \frac{\bar{\Delta}^{*}}{\sqrt{\nu} + (\sqrt{\nu} + 1)\bar{\Delta}^{*}}$. Therefore, $t$ is updated by $t_{+} := t - \Delta t = (1-\sigma)t$.
Note that $t$ decreases linearly with the contraction factor $(1-\sigma)$.

In particular, if we choose $\beta = \frac{\beta^{*}}{4} \approx 0.095492$ then $\bar{\Delta}^{*} \approx 0.113729$,
which leads to $(1-\sigma) = \frac{\sqrt{\nu}(\bar{\Delta}^{*}+1)}{\sqrt{\nu}(\bar{\Delta}^{*} + 1) + \bar{\Delta}^{*}} \approx
\frac{1.1137\sqrt{\nu}}{1.1137\sqrt{\nu} + 0.1137}$.

%------------------------------------------------------------------------------------------
%+ 5.2. The algorithm and its complexity.
%------------------------------------------------------------------------------------------
\subsection{The algorithm and its convergence}\label{subsec:exact_case_Phase2}
Let us fix an initial value $t = t_0 > 0$ and $\beta \in (0, \beta^{*})$, where $\beta^{*}$ is given in \eqref{eq:s6_exact_choice_of_beta}.
First, we apply Phase 1 to find a starting point $y^0 \in Y$ such that $\tilde{\lambda}_0 := \lambda_{\tilde{d}(\cdot, t_0)}(y^0) \leq \beta$.
This phase is carried out by applying the damped Newton iteration scheme proposed in \cite{Nesterov2004}.
Then we perform the path-following algorithm.
From Definition \ref{de:s3_epsilon_dual_sol}, we can see that if $t_k \leq \frac{\varepsilon_d}{\omega_{*}(\beta)}$ then $y^k$ is a  $2\varepsilon_d$-solution
of  \eqref{eq:s2_original_dual_prob}.
The algorithm is presented in detail as follows.

%***********************************************************************
%+ Algorithm 6.1.
%***********************************************************************
\noindent\rule[1pt]{\textwidth}{1.0pt}{~~}
\begin{algorithm}\label{alg:B1}{~}$($\textit{Path-following algorithm with exact Newton iterations}$)$
\end{algorithm}
\vskip -0.2cm
\noindent\rule[1pt]{\textwidth}{0.5pt}
\noindent\textbf{Initialization:} Perform the following steps:
\begin{enumerate}
\item Fix a constant $\beta\in (0, \beta^{*})$ (e.g. $\beta = \frac{1}{4}\beta^{*}$), where $\beta^{*} = \frac{3-\sqrt{5}}{2} \approx 0.381966$.
\item Compute $\bar{\Delta} := \frac{\sqrt{\beta}(1-\sqrt{\beta}-\beta}{1+2\sqrt{\beta}}$ and $\sigma := \frac{\bar{\Delta}}{\sqrt{\nu}+(\sqrt{\nu} +
1)\bar{\Delta}}$.
\item Fix a tolerance $\varepsilon_d > 0$ and choose an initial value $t_0 > 0$. % (e.g. $t_0 = \nu + 2\sqrt{\nu}$).
\end{enumerate}
\noindent\textbf{Phase 1. }(\textit{Finding a starting point}).
\begin{enumerate}
\item Choose an arbitrary starting point $y^{0,0} \in Y$.
\end{enumerate}
\texttt{For} $j = 0,1,\cdots$ \texttt{do}
\begin{enumerate}
\item Solve exactly the primal subproblem \eqref{eq:s3_smoothed_dual_func} \textit{in parallel} to obtain $x^{*}(y^{0,j},t_0)$.
\item Evaluate $\nabla{d}(y^{0,j},t_0)$ and $\nabla{d}(y^{0,j}, t_0)$ by \eqref{eq:s3_deriv_of_sm_dual_func} and \eqref{eq:s3_deriv_of_sm_dual_func},
respectively.
\item Compute the Newton decrement $\tilde{\lambda}_j = \lambda_{\tilde{d}(\cdot, t_0)}(y^{0,j})$.
\item If $\tilde{\lambda}_j \leq \beta$ then set $y^0 := y^{0,j}$ and terminate.
\item Update $y^{0,j+1}$ as $y^{0,j+1} := y^{0,j} - \alpha_j\nabla^2{d}(y^{0,j}, t_0)^{-1}\nabla{d}(y^{0,j}, t_0)$,
where the step size $\alpha_j := \frac{1}{1+\tilde{\lambda}_j} \in (0, 1]$.
\end{enumerate}
\texttt{End of For}.\\
\noindent\textbf{Phase 2.} (\textit{Path-following iterations}).\\
\texttt{For} $k=0,1,\cdots$ \texttt{do}
\begin{enumerate}
\item If $t_k \leq \frac{\varepsilon_d}{\omega_{*}(\beta)}$ then terminate.
\item Update $t_k$ as $t_{k+1} := (1-\sigma)t_k$.
\item Solve exactly the primal subproblem \eqref{eq:s3_smoothed_dual_func} \textit{in parallel} to obtain a solution $x^{*}(y^k,t_{k+1})$.
\item Evaluate $\nabla{d}(y^k,t_{k+1})$ and $\nabla{d}(y^k, t_{k+1})$ by \eqref{eq:s3_deriv_of_sm_dual_func} and \eqref{eq:s3_deriv_of_sm_dual_func},
respectively.
\item Update $y^{k+1}$ as $y^{k+1} := y^k + \Delta y^k = y^k - \nabla^2{d}(y^k, t_{k+1})^{-1}\nabla{d}(y^k, t_{k+1})$.
\end{enumerate}
\texttt{End of For}.\\
\vskip-0.2cm
\noindent\rule[1pt]{\textwidth}{1.0pt}
%***********************************************************************

As in Algorithm \ref{alg:A1}, the main task of this algorithm is Step 1 in Phase 1 and Step 3 in Phase 2, which can be carried out in parallel, and Step 5
in Phase 1 and Step 4 in Phase 2, which require  a centralized computation to solve the linear system $\nabla^2d(y^k, t_{k+1})\Delta y = -\nabla{d}(y^k,
t_{k+1})$ (see Section \ref{sec:implementation_detail}). In an implementation, the primal subproblem can not be solved exactly but it must be solved up to a
very high accuracy.

Since $\tilde{d}(\cdot, t_0)$ is standard self-concordant due to Lemma \ref{le:s3_self_concordancy}. By \cite[Theorem 4.1.12]{Nesterov2004}, the number of
iterations to obtain $y^0\in Y$
such that $\lambda_{\tilde{d}(\cdot, t_0)}(y^0)\leq\beta$ does not exceed
\begin{equation}\label{eq:s6_maxiter_of_Phase1}
\bar{J}_{\max} := \left\lfloor\frac{\tilde{d}(y^{0,0}, t_0) - \tilde{d}^{*}(t_0)}{\omega(\beta)}\right\rfloor + 1 = \left\lfloor\frac{d(y^{0,0}, t_0) -
d^{*}(t_0)}{t_0\omega(\beta)}\right\rfloor + 1.
\end{equation}
The number $\bar{J}_{\max}$ not only depends on the distance $d(y^{0,0}, t_0) - d^{*}(t_0)$ but also on $t_0$.
If we choose $t_0$ small then $\bar{J}_{\max}$ is large, while the number of iterations in Algorithm \ref{alg:B1} is small. Therefore, in the
implementation, we need to balance between these quantities to get a good performance.

The convergence of Phase 2 in Algorithm \ref{alg:B1} is stated in the following theorem.

%+ Theorem 6.1.
\begin{theorem}\label{th:s6_exact_PP_convergence}
Let $t_0 > 0$ and $y^0\in Y$ such that $\lambda_{\tilde{d}(\cdot, t_0)}(y^0) \leq \beta$. Then the maximum number of iterations $k$ needed by Algorithm
\ref{alg:B1} to obtain a
$2\varepsilon_d$ - solution $y^k$ of \eqref{eq:s2_original_dual_prob} does not exceed
\begin{equation}\label{eq:s6_exact_PP_max_iter}
\bar{k} := \left\lfloor\frac{\ln\left(\frac{t_0\omega_{*}(\beta)}{\varepsilon_d}\right)}{\ln\left(1+\frac{\bar{\Delta}^{*}}{\sqrt{\nu}(\bar{\Delta}^{*} +
1)}\right)}\right\rfloor + 1,
\end{equation}
where $\bar{\Delta}^{*}$ is defined by \eqref{eq:s6_exact_choice_of_Delta}.
\end{theorem}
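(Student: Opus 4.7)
The plan is to mimic the argument used for Theorem \ref{th:s4_local_convergence}, but in the simpler exact setting where $\bar{\delta}=0$, so that $\bar{\lambda}\equiv \lambda_{\tilde d(\cdot,t)}$ and $\delta,\delta_+,\delta_1$ all vanish. The first task is to verify the invariant $\tilde\lambda_k:=\lambda_{\tilde d(\cdot,t_k)}(y^k)\leq\beta$ for all $k\geq 0$. This holds for $k=0$ by the exit condition of Phase~1. For the inductive step, I would invoke the special case \eqref{eq:s4_main_estimate_for_exact_case} of Lemma \ref{le:s4_main_inequalities}, yielding \eqref{eq:s6_exact_main_estimate}. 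The choice $\Delta t=\sigma t_k$ in Step~2 together with the bound \eqref{eq:s4_choice_of_Delta_star} forces $\Delta^{*}\leq\bar\Delta^{*}$, where $\bar\Delta^{*}$ is fixed by \eqref{eq:s6_exact_choice_of_Delta} so that the right-hand side of \eqref{eq:s6_exact_main_estimate} is dominated by $\beta$. Hence $\tilde\lambda_{k+1}\leq\beta$, closing the induction.

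Next, since $t_{k+1}=(1-\sigma)t_k$ at every iteration, we obtain the closed form $t_k=(1-\sigma)^k t_0$. Combined with the invariant $\tilde\lambda_k\leq\beta$, Lemma \ref{le:s3_dual_gap_estimate} (in particular \eqref{eq:s3_dual_gap_estimate_2}) yields
\begin{equation*}
d(y^k,t_k)-\phi^{*}\leq t_k\omega_{*}(\tilde\lambda_k)\leq (1-\sigma)^k t_0\,\omega_{*}(\beta).
\end{equation*}
Recalling from Lemma \ref{le:s3_properties_of_smoothed_dual_func}(c) that $d^{*}(t_k)\leq d_0^{*}=\phi^{*}$, this makes $y^k$ a $2\varepsilon_d$-solution of \eqref{eq:s2_original_dual_prob} as soon as $(1-\sigma)^k t_0\omega_{*}(\beta)\leq\varepsilon_d$, which is precisely the stopping criterion of Step~1 of Phase~2.

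Solving the inequality $(1-\sigma)^k t_0\omega_{*}(\beta)\leq\varepsilon_d$ for $k$ gives
\begin{equation*}
k\geq\frac{\ln\!\left(t_0\omega_{*}(\beta)/\varepsilon_d\right)}{\ln\!\left(1/(1-\sigma)\right)}.
\end{equation*}
The final step is a purely algebraic simplification of the denominator. From \eqref{eq:s6_exact_bar_par_decrement} one has $1-\sigma=\sqrt{\nu}(\bar\Delta^{*}+1)\big/\big(\sqrt{\nu}(\bar\Delta^{*}+1)+\bar\Delta^{*}\big)$, whence
\begin{equation*}
\frac{1}{1-\sigma}=1+\frac{\bar\Delta^{*}}{\sqrt{\nu}(\bar\Delta^{*}+1)}.
\end{equation*}
Taking the floor and adding one to cover the ceiling of the real-valued bound yields exactly \eqref{eq:s6_exact_PP_max_iter}.

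The only nonroutine part is the invariance argument: it relies on the monotonicity of the right-hand side of \eqref{eq:s6_exact_main_estimate} in both arguments, together with the careful calibration of $\bar\Delta^{*}$ in \eqref{eq:s6_exact_choice_of_Delta}, which in turn requires $\beta\in(0,\beta^{*})$ with $\beta^{*}=(3-\sqrt 5)/2$. Everything after that is bookkeeping, so I do not anticipate any real obstacle beyond re-using the estimates already established in Subsection~\ref{subsec:s5_analyzing_exact_PPIter}.
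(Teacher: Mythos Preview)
Your proposal is correct and follows essentially the same approach as the paper. The paper's own proof is terser: it takes the invariant $\tilde\lambda_k\leq\beta$ as already established in Subsection~\ref{subsec:s5_analyzing_exact_PPIter} and simply notes $t_k=(1-\sigma)^kt_0$, applies the stopping criterion $t_k\leq\varepsilon_d/\omega_*(\beta)$, and rewrites $1/(1-\sigma)=1+\bar\Delta^{*}/(\sqrt\nu(\bar\Delta^{*}+1))$ to obtain \eqref{eq:s6_exact_PP_max_iter}; your version is just a more self-contained exposition of the same argument.
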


%+ Proof of Theorem 6.1.
\begin{proof}
From Step 2 of Algorithm \ref{alg:B1}, we have $t_k = (1-\sigma)^kt_0 = \left(1 + \frac{\bar{\Delta}^{*}}{\sqrt{\nu}(\bar{\Delta}^{*}+1)}\right)^kt_0$.
Algorithm
\ref{alg:B1} is terminated if $t_k \leq \frac{\varepsilon_d}{\omega_{*}(\beta)}$. Thus $\left(1 + \frac{\bar{\Delta}^{*}}{\sqrt{\nu}(\bar{\Delta}^{*} +
1)}\right)^k \leq
\frac{\varepsilon_d}{t_0\omega_{*}(\beta)}$, which leads to \eqref{eq:s6_exact_PP_max_iter}.
%\Eproof
\end{proof}
%+ End of the proof.

%+ Remark 6.1.
\begin{remark}[\textbf{The worst-case complexity}]\label{re:s6_complexity}
Since $\ln\left(1+\frac{\bar{\Delta}^{*}}{\sqrt{\nu}(\bar{\Delta}^{*} + 1)}\right) \sim \frac{\bar{\Delta}^{*}}{\sqrt{\nu}(\bar{\Delta}^{*} + 1)}$,
the worst-case
complexity of Algorithm \ref{alg:B1} is $O(\sqrt{\nu}\ln(t_0/\varepsilon_d))$.
\end{remark}

%+ Remark 6.2.
\begin{remark}[\textbf{Damped Newton iteration}]\label{re:s5_damped_NT_path_following}
Note that, at Step 5 of Algorithm \ref{alg:B1}, we can use a damped Newton iteration $y^{k+1} := y^k - \alpha_k\nabla^2{d}(y^k, t_{k+1})^{-1}\nabla{d}(y^k,
t_{k+1})$
instead of the full-step Newton iteration, where $\alpha_k = (1 + \lambda_{\tilde{d}(\cdot, t_{k+1})}(y^k))^{-1}$. In this case, with the same argument as
before, we can compute $\beta^{*} = 0.5$ and $\Delta^{*} = \frac{\sqrt{0.5\beta} -
\beta}{1+\sqrt{0.5\beta}}$.
\end{remark}

%////////////////////////////////////////////////////////////////////////////////////////////////////////////////////////////////////
%+ 6. Implementation Details.
%////////////////////////////////////////////////////////////////////////////////////////////////////////////////////////////////////
\section{Discussion on implementation}\label{sec:implementation_detail}
In this section, we first show how to handle a general concave objective function. Next, we discuss on solving the primal subproblem
\eqref{eq:s3_smoothed_dual_func} including local equality constraints. Finally, we briefly describe a parallel method to compute the Newton-type direction
for the master problem.

%*********************************************************************
%+ 6.1. Extend to the general objective functions.
%*********************************************************************
\subsection{Handling general objective function}\label{subsec:handling_general_objective_func}
If $\phi_i$ is nonlinear, concave and its epi-graph is endowed with a self-concordant barrier for some $i\in I_M := \{1,\dots, M\}$, then we propose
to use slack variables to move the objective function into constraints.
Let us denote by $\hat{x}_{i} := (x_{i}^T, s_i)^T$ and
\begin{equation*}
\hat{X}_{i} := \left\{(x_i, s_i) ~|~ x_i \in X_i, ~s_i \geq \underline{s}_i, ~\phi_i(x_i) \geq s_i\right\},
\end{equation*}
for a sufficiently small value $\underline{s}_i$ such that the constraint $s_i \geq \underline{s}_i$ is inactive. Let $\hat{F}_i$ be a self-concordant barrier
of $\hat{X}_i$ and let $\hat{c}_i := (0^T, 1)^T \in \mathbb{R}^{n_i+1}$. Then problem \eqref{eq:s1_main_CP} can be transformed into a convex separable
optimization problem with linear objective function. In this case, the algorithms developed in the previous sections can be applied to solve the resulting
problem.

If $\phi_i$ is concave quadratic then, according to \cite[Theorem 3.3.1]{Nesterov1994}, we can construct a self-concordant barrier $G_i(\hat{x}_{i}) :=
-\ln(\phi_i(x_{i}) - s_i)$ for the epi-graph of $\phi_i$. Particularly, the optimality condition for this problem is $ \hat{c} + \hat{A}^Ty -
t\nabla\hat{F}(\hat{x}) = 0$, which can be written as
\begin{equation*}
\begin{cases}
&A^Ty - t\nabla F(x) - t\mathrm{diag}(f_i(x_{i}) - s_i)^{-1}\nabla f(x) = 0,\\
&t\mathrm{diag}(f_i(x_{i})-s_i)^{-1} = \mathsf{1}.
\end{cases}
\end{equation*}
By substituting the second line into the first line of the above expression, we obtain
\begin{equation*}
A^Ty - t\nabla F(x) + \nabla f(x) = 0.
\end{equation*}
However, this condition is indeed the optimality condition of the following problem
\begin{eqnarray}\label{eq:extra_smoothed_dual_func}
d(y, t) := \max_{x\in \int(X)}\left\{ f(x) + y^T(Ax-b) - t[F(x) - F(x^c)]\right\}.
\end{eqnarray}
Consequently, the algorithms developed in the previous sections can be applied to solve \eqref{eq:s1_main_CP} without moving $\phi_i$ into the constraints.

Several examples of convex problems for which the logarithmic function $G_i(\hat{x}_{i})$ is self-concordant can be found in \cite{Hertog1992}.
Note that, in some problems, we may need to reformulate the epi-graph of $f_i$ to obtain a self-concordant barrier.
For example, many optimization problems in network use an objective function of the form $\phi_i(x_i) = \frac{x_i}{1 - x_i}$, where $0 \leq x_i < 1$. The
inequality presented the epi-graph of $\phi_i$ is $\frac{x_i}{1-x_i} \leq s_i$, which is equivalent to $\sqrt{(x_i + s_i)^2 + 4} \leq x_i - s_i - 2$. The last
inequality is indeed a second order cone constraint endowed with a $2$-self-concordant barrier \cite{Nesterov1994}.

%***************************************************************
% 6.2. Solving the inner primal problem.
%***************************************************************
\subsection{Solving the primal subproblems}\label{subsec:s7_solving_inner_primal_problem}
Let us recall the primal subproblem in \eqref{eq:extra_smoothed_dual_func} with a nonlinear objective function. We need to solve this problem inexactly up
to a desired accuracy $\varepsilon(t) > 0$, e.g. $\varepsilon(t) = \frac{\bar{\delta}t}{(\nu+2\sqrt{\nu})(1+\bar{\delta})}$.
Note that the approximate optimality condition of \eqref{eq:s3_smoothed_dual_prob} becomes
\begin{equation}\label{eq:s7_inner_primal_prob}
\norm{\nabla{f}(x) + A^Ty - t\nabla{F}(\bar{x})}_{x^c}^{*} \leq \varepsilon(t).
\end{equation}
By separability, this approximate problem can be solved in parallel as
\begin{equation}\label{eq:s7_inner_primal_prob_i}
\norm{\nabla{f}_i(x) + A^T_{i}y - t\nabla{F}_i(\bar{x}_{i})}_{x^c_{i}}^{*} \leq \varepsilon_i(t), ~\varepsilon_i(t) \geq 0, ~i=1,\dots, M,
\end{equation}
where $\sum_{i=1}^M\varepsilon_i(t) = \varepsilon(t)$. In principle, we can choose $\varepsilon_i(t) = \frac{\varepsilon(t)}{M}$.
However, in some practical situations, it is important to choose different $\varepsilon_i(t)$ for different components, especially, when some component problems
can be solved analytically in a closed form.

Since $F_i$ is standard self-concordant, the function $\psi_i(x_i; y, t):=  F_i(x_i) - t^{-1}(f_i(x_i)  +  y^TA_ix_i)$ is also standard self-concordant.
Moreover, $\nabla\psi_i(x_i; y, t) =  \nabla{F}_i(x_i) - t^{-1}(\nabla{f}_i(x_i) + A^T_iy)$ and $\nabla^2\psi_i(x_i; y, t) = \nabla^2F_i(x_i) -
\nabla^2f_i(x_i)$.
Since $\nabla^2\psi_i(x_i; y, t) \succ 0$, we define
\begin{equation*}
\lambda_{\psi_i}(x_i) := \left[\nabla\psi_i(x_i; y, t)\nabla\psi_i(x_i; y, t)^{-1}\nabla\psi_i(x_i; y, t)\right]^{-1/2},
\end{equation*}
the Newton decrement of $\psi_i$.

Now, let us apply Newton method to solve problem \eqref{eq:s7_inner_primal_prob}. First, we fix $\beta_i \in (0, \beta^{*})$, where $\beta^{*} :=
\frac{1}{2}(3-\sqrt{5})$, and choose $x^0_{i} \in \int(X_{i})$. Then, we generate a sequence $\{x^j_{i}\}_{j\geq 0}$ as
\begin{eqnarray}\label{eq:s7_newton_iter}
&&x^{j+1}_{i} := x^j_{i} + \alpha_{ij}\Delta{x}^j_{i}, \nonumber\\
[-1.2ex]\mathrm{where}~~&&\\[-1.2ex]
 &&\Delta{x}^j_{i} := - \nabla^2\psi_i(x^j_i; y, t)^{-1}\nabla\psi_i(x_i; y, t) ~\textrm{and} ~\alpha_{ij} \in (0, 1].\nonumber
\end{eqnarray}
Theoretically, the step-size $\alpha_{ij}$ can be chosen as $\alpha_{ij} := 1$ if $\lambda_{\psi_i}(x_i^j) \leq \beta_i$ and $\alpha_{ij} :=
(1+\lambda_{\psi_i}(x_i^j))^{-1}$, otherwise.
However, this choice is usually too conservative and not preferable in practice. Thus one can use an appropriate line-search procedure to select $\alpha_{ij}$.
Note that in linear programming, $F_i$ is diagonal, e.g. $F_i(x_{i}) = \textrm{diag}(-\ln(x_{i}))$, so that computing the Newton iteration
\eqref{eq:s7_newton_iter} requires a low computational cost. In general, we have to solve a linear system of the form 
\begin{equation*}
\nabla^2\psi_i(x^j_i; y, t)\Delta{x}_{i}^j = - \nabla\psi_i(x_i; y, t)
\end{equation*}
to obtain a Newton direction $\Delta{x}_{i}^j$.
The convergence of the Newton scheme \eqref{eq:s7_newton_iter} can be found in \cite{Nesterov2004}.
Note that in Algorithms \ref{alg:A1} and \ref{alg:A1a}, \eqref{eq:s7_inner_primal_prob_i} is solved repeatedly at different $t_k$. It is important to
warm-start the Newton iteration \eqref{eq:s7_newton_iter} by using the finally approximate solution of the previous iterate $t_{k-1}$ as a starting point for
the current one $t_k$.

%%% The case with linear constraint.
Finally, if the local equality constraints $E_ix_i = f_i$ are available in \eqref{eq:s1_main_CP} for some $i\in\{1,\dots, M\}$, then the KKT condition of the
primal subproblem $i$ becomes
\begin{equation}\label{eq:s7_kkt_i_eq}
\begin{cases}c_i + A_i^Ty + E_i^Tz_i - t\nabla{F}_i(x_i) = 0, \\ E_ix_i - f_i = 0.\end{cases}
\end{equation}
Instead of the full KKT system \eqref{eq:s7_kkt_i_eq}, we consider a reduced KKT condition as follows
\begin{equation}\label{eq:s7_reduced_kkt_i_eq}
Z^T_i(c_i + A_i^Ty) - tZ_i^T\nabla{F}_i(Z_ix_i^z + R^{-T}_if_i) = 0.
\end{equation}
Here, $(Q_i, R_i)$ is a QR-factorization of $E_i^T$ and $[Y_i, Z_i] = Q_i$ is a basis of the range space and the null space of $E_i^T$, respectively. Due to the
invariance of the norm $\norm{\cdot}_{x^{*}}$, we can show that $\norm{\bar{x}_{\bar{\delta}}-x^{*}}_{x^{*}} = \norm{\bar{x}_{\bar{\delta}}^z -
x^{*z}}_{x^{*z}}$. Therefore, the condition \eqref{eq:s4_approx_sol} coincides with $\norm{\bar{x}_{\bar{\delta}}^z - x^{*z}}_{x^{*z}} \leq
\bar{\delta}$. However, the last condition is satisfied if 
\begin{equation}\label{eq:s7_condtion_for_approx_sol}
\norm{Z^T_i(c_i + A_i^Ty) - tZ_i^T\nabla{F}_i(Z_ix_i^z + R^{-T}_if_i)}_{x^{cz}_i}^{*} \leq \varepsilon_i,
\end{equation}
where $\sum_{i=1}^M\varepsilon_i = \varepsilon_p$ and $\varepsilon_p$ is defined by \eqref{eq:s4_choice_tol_for_inex_case}. Note that the QR-factorization of
$E_i^T$ can be computed one time, a priori.

%***************************************************************
% 6.3. Compute a Newton search direction.
%***************************************************************
\subsection{Computing the inexact perturbed Newton direction}\label{subsec:s7_compute_NT_search_direction}
Let us rewrite the inexact-perturbed Newton direction in Algorithms \ref{alg:A1} and \ref{alg:A1a} in a unified formula:
\begin{equation*}
\Delta{y}^k := -\nabla^2{d}_{\bar{\delta}}(y^k, t)^{-1}\nabla{d}_{\bar{\delta}}(y^k, t),
\end{equation*}
where $t$ can be $t_{k+1}$ or $t_0$.
We discuss in this subsection how to compute $\Delta{y}^k$ in an appropriate way by taking into account the specific structure of problem \eqref{eq:s1_main_CP}.
Note that $\Delta{y}^k$ is the solution of the following linear system:
\begin{equation}\label{eq:s7_linear_system}
\nabla^2{d}_{\bar{\delta}}(y^k, t)\Delta{y}^k = -\nabla{d}_{\bar{\delta}}(y^k, t).
\end{equation}
The gradient vector $\nabla{d}_{\bar{\delta}}(y^k,t)$ is computed as
\begin{equation*}
\nabla{d}_{\bar{\delta}}(y^k,t) = A\bar{x}_{\bar{\delta}}(y^k, t) - b = \sum_{i=1}^MA_{i}\bar{x}_{i}(y^k,t) - b := g_k,
\end{equation*}
and the Hessian matrix $\nabla^2{d}_{\bar{\delta}}(y^k,t)$ is obtained from
\begin{equation*}
\nabla^2{d}_{\bar{\delta}}(y^k,t) = \frac{1}{t}\sum_{i=1}^MA_{i}\nabla^2{F}_i(\bar{x}_{i}(y^k,t))^{-1}A_{i}^T := \sum_{i=1}^MA_iG_i^kA_i^T.
\end{equation*}
Note that each block $A_{i}\bar{x}_{i}(y^k,t)$ as well as $A_{i}\nabla^2{F}_i(\bar{x}_{i}(y^k,t))^{-1}A_{i}^T$ can be computed \textit{in parallel}.
Then, the linear system \eqref{eq:s7_linear_system} can be written as
\begin{equation}\label{eq:s7_NT_dual_dir}
\left(\sum_{i=1}^MA_iG_i^kA_i^T\right)\Delta{y}^k = -g_k.
\end{equation}
Sine matrix $G_i^k \succeq 0$ and $\sum_{i=1}^MA_iG_i^kA_i^T \succ 0$, one can apply either Cholesky-type factorizations  or conjugate gradient (CG) methods to
solve this problem. Note that the CG method only requires matrix-vector operations. More details on parallel solution of \eqref{eq:s7_linear_system} can be
found, e.g., in \cite{Necoara2009,Zhao2005}.

%////////////////////////////////////////////////////////////////////////////////////////////////////////////////////////////////////
%+ 7. Numerical Results.
%////////////////////////////////////////////////////////////////////////////////////////////////////////////////////////////////////
\section{Numerical Tests}\label{sec:num_results}
In this paper, we test the algorithms developed in the previous sections by solving a routing problem with congestion cost. This problem appears in the area of
telecommunications and in other network flow problems such as transportation \cite{Holmberg2006}. 
Let us consider a network $\mathcal{G} = (\mathcal{N}, \mathcal{A})$, where $\mathcal{N}$ is the set of nodes and $\mathcal{A}$ is the set of links. Let
$\mathcal{C}$ be a set of commodities to be sent through the network $\mathcal{G}$. Each commodity $k\in\mathcal{C}$ has a source $s_k\in\mathcal{N}$, a
destination $d_k\in\mathcal{N}$ and a certain amount of demand $d_k\geq 0$. 
Each link $(i,j)\in \mathcal{A}$ has a maximum capacity $b_{ij}\geq 0$ in which no congestion is assumed to be appeared, and a linear cost per unit $c_{ij}$.
The variable $u_{ijk}$ denotes the amount of commodity $k$ that is sent through the link $(i,j)$. Flow exceeding $b_{ij}$ may be sent through the link $(i,j)$
but will then causes congestion with an additional nonlinear cost function $g_{ij}$ depending on the exceeded value $v_{ij}$ considered as a variable.
We denote by $\mathcal{N}_s$ and $\mathcal{N}_d$ the sets of sources and destinations, respectively. Let $\mathcal{N}_c := \mathcal{N} \backslash
(\mathcal{N}_s\cup\mathcal{N}_d)$ and assume that each node in $\mathcal{N}_c$ has at least one ingoing link and one outgoing link.

Mathematically, the optimization model of the routing problem with congestion (RPC) can be formulated as, see, e.g. \cite{Holmberg2006}:
\begin{equation}\label{eq:RPC}
\left\{\begin{array}{cl}
\displaystyle\min_{u_{ijk}, v_{ij}} & \displaystyle\sum_{k\in \mathcal{C}}\displaystyle\sum_{(i,j)\in\mathcal{A}}c_{ij}u_{ijk} +
\displaystyle\sum_{(i,j)\in\mathcal{A}}w_{ij}g_{ij}(v_{ij}) \\
\textrm{s.t.} ~&  \displaystyle\sum_{j:(i,j)\in\mathcal{A}}u_{ijk} - \displaystyle\sum_{j:(j,i)\in\mathcal{A}}u_{jik} 
= \begin{cases}
d_k & \textrm{if} ~ i \in\mathcal{N}_s,\\
-d_k &\textrm{if}~  i \in\mathcal{N}_d, \\ 
0 &\textrm{otherwise},
\end{cases}\\ 
&\displaystyle\sum_{k\in C}u_{ijk} - v_{ij} = b_{ij}, ~(i,j)\in\mathcal{A},\\
&u_{ijk} \geq 0, ~ v_{ij} \geq 0, ~(i,j)\in\mathcal{A},\\
\end{array}
\right. 
\end{equation}
where $w_{ij} \geq 0$ is the weighting of the additional cost function $g_{ij}$ for $(i,j)\in\mathcal{A}$.

In this example we assume that the additional cost function $g_{ij}$ is given by one of the following functions: a) $g_{ij}(v_{ij}) = -\ln(v_{ij})$, the
logarithmic function or b) $g_{ij}(v_{ij}) = v_{ij}\ln(v_{ij})$, the entropy function.
With these choices, it was shown in \cite{Nesterov2004}, the self-concordant barrier function corresponding to the epi-graph  
\begin{equation*}
\mathcal{E}_{g_{ij}} := \left\{ (v_{ij}, s) \in \mathbb{R}_{+}\times\mathbb{R} ~|~ g_{ij}(v_{ij}) \leq s \right\}	
\end{equation*}
of $g_{ij}$ is given by: a) $F_{ij}(v_{ij}, s_{ij}) = -\ln v_{ij} - \ln(\ln v_{ij} + s_{ij})$ with parameter $\nu_{ij} = 2$ or b) $F_{ij}(v_{ij}, s_{ij}) = 
-\ln v_{ij} - \ln(s_{ij} - v_{ij}\ln v_{ij})$ with parameter $\nu_{ij} = 2$, respectively.
Now, by using slack variables $s_{ij}$, we can move the nonlinear terms of the objective function to the constraints. The objective function of the resulting
problem becomes
\begin{equation}\label{eq:RPC_objnew}
f(u, v, s) := \sum_{k\in \mathcal{C}}\displaystyle\sum_{(i,j)\in\mathcal{A}}c_{ij}u_{ijk} + \displaystyle\sum_{(i,j)\in\mathcal{A}}w_{ij}s_{ij}, 
\end{equation}
with additional constraints $g_{ij}(v_{ij}) \leq s_{ij}$, $(i,j)\in\mathcal{A}$.

It is clear that problem \eqref{eq:RPC} is separably convex with respect to $M$ components, $n$ variables $u_{ijk}$, $v_{ij}$ and $s_{ij}$ and $m$ coupling
constraints, where $M := n_{\mathcal{A}}$, $n := n_{\mathcal{C}}n_{\mathcal{A}} + 2n_{\mathcal{A}}$ and $m := n_{\mathcal{C}}n_{\mathcal{N}}$, where
$n_{\mathcal{A}} := \abs{\mathcal{A}}$, $n_{\mathcal{C}} := \abs{\mathcal{C}}$ and $n_{\mathcal{N}} := \abs{\mathcal{N}}$.
Let 
\begin{equation}\label{eq:RPC_constnew}
X_{ij} \!:=\! \left\{\! v_{ij} \! \geq \!0, \sum_{k\in\mathcal{C}}u_{ijk} \!-\! v_{ij} \!=\! b_{ij}, ~\! g_{ij}(v_{ij}) \!\leq\! s_{ij}, (i,j)\in \mathcal{A},
k\in\mathcal{C} \! \right\},
~(i,j)\in\mathcal{A}.
\end{equation}
Then problem \eqref{eq:RPC} can be reformulated in the form of \eqref{eq:s1_main_CP} with linear objective function \eqref{eq:RPC_objnew} and the local
constraint set \eqref{eq:RPC_constnew}. Note that each primal subproblem of the form \eqref{eq:s3_smoothed_dual_func} has $n_{\mathcal{C}} + 2$ variables and
one equality constraint.

The aim is to compare the effect of the parameters on the performance of the algorithms. We consider two variants of Algorithm \ref{alg:A1}, where we set
$\bar{\delta} = 0.5\bar{\delta}^{*}$ and $\bar{\delta} = 0.25\bar{\delta}^{*}$ in Phase 1 and $\bar{\delta} = 0.01$ and $\bar{\delta} = 0.005$ in Phase 2,
respectively. We denote these variants by \texttt{A1-v1} and \texttt{A1-v2}, respectively. For Algorithm \ref{alg:B1}, we also consider two
cases. In the first case we set the tolerance of the primal subproblem to $\varepsilon_p = 10^{-6}$, and the second one is $10^{-10}$, where we call them as
\texttt{A3-v1} and \texttt{A3-v2}, respectively. All variants are terminated with the same tolerance $\varepsilon_d = 10^{-4}$. The initial
barrier parameter value is set to $t_0 := 0.25$.

The algorithms are implemented in C++ running on a PC Desktop Intel\textregistered~Core(TM)2 Quad CPU Q6600 with 2.4GHz and 3Gb RAM. The algorithms are
paralellized by using \texttt{OpenMP}.
The input data is generated randomly, where the nodes of the network are generated in a rectangle $[0, 100]\times[0, 300]$, the demand $d_k$ is in
$[50, 500]$, the weighting vector $w$ is set to $10$, the congestion vector is in $[10, 100]$ and the linear cost $c_{ij}$ is the Euclidean length of the link
$(i, j)\in\mathcal{A}$. The nonlinear cost function $g_{ij}$ is chosen randomly between two functions in a) and b) defined above.

We test the algorithms on a collection of $150$ random problems, where $108$ problems are solve successfully. The size of these problems varies from $M = 6$
to $14.280$ components, $n = 84$ to $77.142$ variables and $m = 15$ to $500$ coupling constraints.

The performance profiles are shown in Figures \ref{fig:perp1} and \ref{fig:perp2}.
The first figure shows the performance profile of $4$ variants which consists of the total CPU time, the total time of solving the primal subproblems in two
phases, the CPU time of Phase 1 and the CPU time of Phase 2 separately in second.
%+ Figure 4.
\begin{figure}[ht!]
%\centerline{\includegraphics[angle=0,height=7.0cm,width=13.1cm]{cputime_12092011}}
\centerline{\includegraphics[angle=0,height=7.0cm,width=13.1cm]{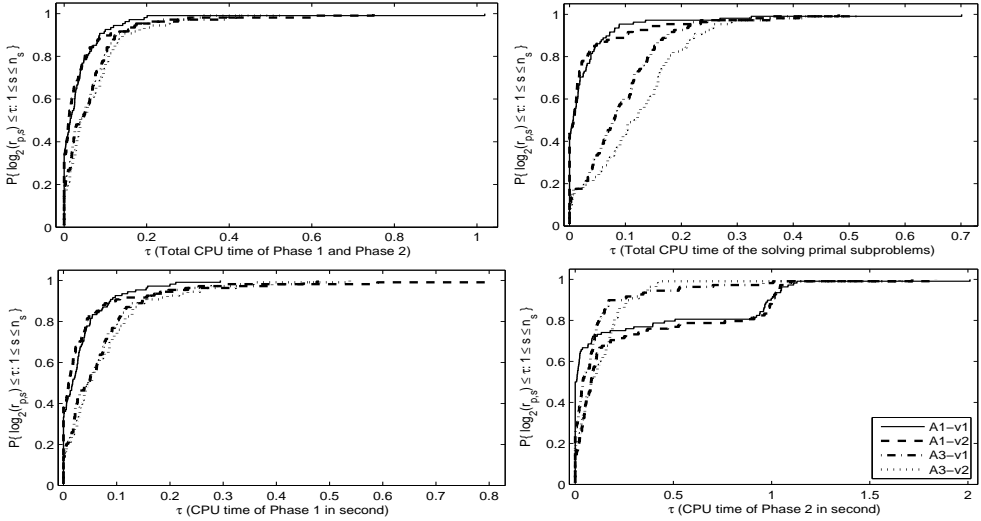}}
\caption{The CPU time performance profile of four variants.}
\label{fig:perp1}
\end{figure}
As we can see from this figure that Algorithm \ref{alg:A1} works better than Algorithm \ref{alg:B1} in terms of the total CPU time and the CPU time for solving
the primal subproblems. Moreover, the accuracy in solving the primal subproblems also affects the performance of the algorithms.
We also observe that the number of iterations for solving the master problem in Phase 1  for all four variants are almost similar, while
they are different in Phase 2. However, Phase 2 is performed when the iteration point is in the quadratic convergence region, it only takes few steps toward 
the desired approximate solution. Therefore, the computational time of Phase 1 dominates the one in Phase 2.
Moreover, in this example, the structure of the master problem is almost dense, we do not use any sparse linear algebra solver. 
Consequently, the algorithms developed in this paper are recommended to the class of problems with many variables and few coupling constraints in the case
the master dual problem possesses dense structure.
In other applications, the efficient methods for sparse linear algebra should be taken into account.

We also compare the total number of iterations for solving the primal subproblems in Figure \ref{fig:perp2}. It can be seen from this figure that Algorithm
\ref{alg:A1} is superior in terms of iterations compared to Algorithm \ref{alg:B1}, although the accuracy of solving the primal subproblem in Algorithm
\ref{alg:B1} is set to $10^{-6}$, which is not too high in interior point methods.
%+ Figure 5.
\begin{figure}[ht!]
% \centerline{\includegraphics[angle=0,height=3.9cm,width=7.6cm]{iteration_12092011}}
\centerline{\includegraphics[angle=0,height=3.9cm,width=7.6cm]{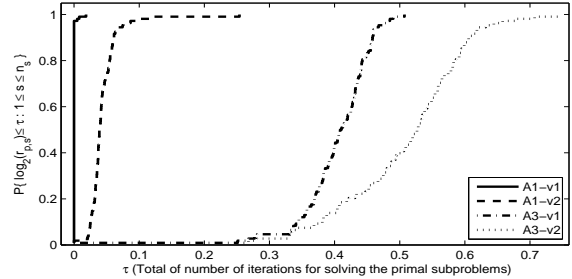}}
\caption{The iteration performance profile of four variants.}
\label{fig:perp2}
\end{figure}
The performance profiles also reveal the effect of the parameters on the number of iterations and computational time.
Consequently, in practice, it is valuable to carefully choose appropriate parameters for a specific implementation.

%////////////////////////////////////////////////////////////////////////////////////////////////////////////////////////////////////
%+ 8. Conclusions.
%////////////////////////////////////////////////////////////////////////////////////////////////////////////////////////////////////
\section{Concluding remarks}\label{sec:conclusions}
We have proposed a smoothing technique for Lagrangian decomposition using self-concordant barriers in large-scale convex separable optimization. We provided
global and local approximations to the dual function. Then, we proposed a path-following algorithm with inexact perturbed Newton iterations. The convergence of
the algorithm has been analyzed and its complexity has been estimated. The theory presented in this paper is significant in practice, since it allows to solve
the primal subproblem inexactly. Moreover, we allow one to balance between the accuracy of solving the primal subproblem and the convergence rate of the
path-following algorithm.
Even in the exact case, we also obtained a direct analysis for the convergence of the path-following algorithm which was presented by Mehrotra
\cite{Mehrotra2009} \textit{et al} and Shida \cite{Shida2008}. The details of implementation and numerical tests have also been presented.
Extensions to the inexactness of linear algebra and to distributed implementation are an interesting and significant future research direction.

%////////////////////////////////////////////////////////////////////////////////////////////////////////////////////////////////////
%+ Acknowledgments.
%////////////////////////////////////////////////////////////////////////////////////////////////////////////////////////////////////
\vskip 0.2cm
\begin{small}
\noindent{\textbf{Acknowledgments.}} This research was supported by Research Council KUL: CoE EF/05/006 Optimization in Engineering(OPTEC), GOA AMBioRICS,
IOF-SCORES4CHEM, several PhD/postdoc \& fellow grants; the Flemish Government via FWO: PhD/postdoc grants, projects G.0452.04, G.0499.04, G.0211.05,
G.0226.06, G.0321.06, G.0302.07, G.0320.08 (convex MPC), G.0558.08 (Robust MHE), G.0557.08, G.0588.09, research communities (ICCoS, ANMMM, MLDM) and via IWT:
PhD Grants, McKnow-E, Eureka-Flite+EU: ERNSI; FP7-HD-MPC (Collaborative Project STREP-grantnr. 223854), Contract Research: AMINAL, HIGHWIND, and Helmholtz
Gemeinschaft: viCERP; Austria: ACCM, and the Belgian Federal Science Policy Office: IUAP P6/04 (DYSCO, Dynamical systems, control and optimization, 2007-2011), 
European Union FP7-EMBOCON under grant agreement no 248940; CNCS-UEFISCDI (project TE231, no. 19/11.08.2010); ANCS (project PN II, no. 80EU/2010); Sectoral
Operational Programme Human Resources Development 2007-2013 of the Romanian Ministry of Labor, Family and Social Protection through the Financial Agreement
POSDRU/89/1.5/S/62557.
\end{small}

%+ Appendix.
\appendix

%////////////////////////////////////////////////////////////////////////////////////////////////////////////////////////////////////
%+ A. The proof of Technical lemmas
%////////////////////////////////////////////////////////////////////////////////////////////////////////////////////////////////////
\section{The proof of the technical statements} In this appendix, we provide a complete proof of Lemmas \ref{le:s3_local_bound_on_dual_func}, 
\ref{le:s3_bound_of_smoothed_dual_func_to_orig_dual_func} and \ref{le:s3_choice_of_para_t}.

%+ A.1. The derivation of Lemma 3.1.b
\subsection{The proof of Lemma \ref{le:s3_local_bound_on_dual_func}}
%+ The proof of Lemma 3.1.b.
\begin{proof}
Since $F_i$ is standard self-concordant, according to \cite[Theorem 4.1.7, inequality 4.1.8]{Nesterov2004} we have
\begin{align*}
F_i(y_{i}) &\geq F_i(x_{i}) + \nabla F_i(x_{i})^T(y_{i}-x_{i}) + \omega(\norm{y_{i}-x_{i}}_{x_{i}}) \\
& \geq F_i(x_{i}) -\norm{\nabla F_i(x_{i})}^{*}_{x_{i}}\norm{y_{i}-x_{i}}_{x_{i}} + \omega(\norm{y_{i}-x_{i}}_{x_{i}}).
\end{align*}
This inequality implies
\begin{align*}
F_i(x_{i}) - F_i(y_{i}) &\leq \norm{\nabla F_i(x_{i})}^{*}_{x_{i}}\norm{y_{i}-x_{i}}_{x_{i}} - \omega(\norm{y_{i}-x_{i}}_{x_{i}}) \\
& \leq \max_{x_{i}\in\dom(F_i)}\left\{ \norm{\nabla F_i(x_{i})}^{*}_{x_{i}}\norm{y_{i}-x_{i}}_{x_{i}} - \omega(\norm{y_{i}-x_{i}}_{x_{i}}) \right\}\\
& \leq \max_{\xi := \norm{y_{i}-x_{i}}_{x_{i}} \geq 0 }\left\{ \norm{\nabla F_i(x_{i})}^{*}_{x_{i}}\xi - \omega(\xi) \right\}\\
& = \omega_{*}(\norm{\nabla F_i(x_{i})}_{x_{i}}^{*}).
\end{align*}
Here, the last equality follows from \cite[Lemma 4.1.4]{Nesterov2004} and the assumption that $\lambda_{F_i}(x^{*}_{i}(y,t)) < 1$.
Using the above inequality with $y_i = x_i^c$ and $x_i = x_i^{*}(y,t)$ we have
\begin{align}\label{eq:lm21_est0}
F_i(x_{i}^{*}(y,t)) - F_i(x_{i}^c) &\leq \omega_{*}(\lambda_{F_i}(x^{*}_{i}(y,t)).
\end{align}
Now, we prove \eqref{eq:s3_local_bound_on_dual_func}. Let $x_{i}(\alpha) := x^{*}_{i}(y, t) + \alpha(x^{*}_{i}(y) - x^{*}_{i}(y,t))$ with $\alpha \in [0,1)$.
Since $x^{*}_{i}(y,t)\in\int(X_{i})$ and $\alpha < 1$, $x_{i}(\alpha) \in\int(X_{i})$.
By applying \cite[inequality 2.3.3]{Nesterov1994}, we have
\begin{equation*}
F_i(x_{i}(\alpha)) \leq F_i(x^{*}_{i}(y,t)) - \nu_i\ln(1-\alpha),
\end{equation*}
which is equivalent to
\begin{equation}\label{eq:lm21_est1}
F_i(x_{i}(\alpha)) - F_i(x^c_{i})\leq F_i(x^{*}_{i}(y,t)) - F_i(x^c_{i}) - \nu_i\ln(1-\alpha).
\end{equation}
Now, from the definition of $d_i(y,t)$, the concavity of $\phi_i$ and $d_i(y)$, and \eqref{eq:lm21_est0} we have
\begin{eqnarray}\label{eq:lm21_est2}
d_i(y,t) && = \max_{x_{i}\in\int(X_{i})}\left\{\phi_i(x_{i}) + y^TA_{i}x_{i} - t[F_i(x_{i}) - F_i(x_{i}^c)]\right\} \nonumber\\
&&\geq \max_{\alpha\in [0,1)}\left\{\phi_i(x_{i}(\alpha)) + y^TA_{i}x_{i}(\alpha) - t[F_i(x_{i}(\alpha)) - F_i(x_{i}^c)]\right\} \nonumber\\
&&\geq \max_{\alpha\in [0,1)}\Big\{\alpha[\phi_i(x^{*}_{i}(y)) + y^TA_{i}x^{*}_{i}(y)] + (1-\alpha)[\phi_i(x^{*}_{i}(y,t)) + y^TA_{i}x^{*}_{i}(y,t)]\nonumber\\
[-1.5ex]\\[-1.5ex]
&& - t[F_i(x_{i}(y,t)) - F_i(x_{i}^c)] + \nu_it\ln(1-\alpha)\Big\} \nonumber\\
&& = \max_{\alpha\in [0, 1)}\Big\{\alpha d_i(y) + (1-\alpha)d_i(y,t) - \alpha t[F_i(x_{i}(y,t)) - F_i(x_{i}^c)] + \nu_it\ln(1-\alpha) \Big\} \nonumber\\
&& \overset{\tiny\eqref{eq:lm21_est0}}{\geq} \max_{\alpha\in [0, 1)}\Big\{ \alpha d_i(y) + (1-\alpha)d_i(y,t) + t\nu_i\ln(1-\alpha) - \alpha
t\omega_{*}(\lambda_{F_i}(x^{*}_{i}(y,t)))\Big\}.\nonumber
\end{eqnarray}
Rearranging \eqref{eq:lm21_est2}, we obtain
\begin{equation}\label{eq:lm21_est3}
d_i(y, t) \geq d_i(y) -t\omega_{*}(\lambda_{F_i}(x^{*}_{i}(y,t))) + t\nu_i\frac{\ln(1-\alpha)}{\alpha}, ~\forall \alpha\in [0, 1).
\end{equation}
Since $\frac{\ln(1-\alpha)}{\alpha} \leq -1$ for all $\alpha \in (0, 1)$ and $\lim_{\alpha\to 0^{+}}\frac{\ln(1-\alpha)}{\alpha} = -1$. Inequality
\eqref{eq:lm21_est3} implies that
\begin{equation*}\label{eq:lm21_est4}
d_i(y, t) - d_i(y) \geq  -t[\omega_{*}(\lambda_{F_i}(x^{*}_{i}(y,t))) + \nu_i].
\end{equation*}
which is the right-hand side of \eqref{eq:s3_local_bound_on_dual_func}. The left-hand side of \eqref{eq:s3_local_bound_on_dual_func} follows from the relation
$F_i(x_{i}) - F_i(x_{i}^c) \geq \omega(\norm{x_i -
x^c_{i}}_{x^c_{i}}) \geq 0$ due to \eqref{eq:s3_analytic_center_est}.
\end{proof}
%+ End of the proof.

%+ A.2. The derivation of Lemma 3.1.
\subsection{The proof of Lemma \ref{le:s3_bound_of_smoothed_dual_func_to_orig_dual_func}}
%+ Proof of Lemma 3.1.
\begin{proof}
The second inequality in \eqref{eq:s3_bound_of_smoothed_dual_func_to_orig_dual_func} is proved in Lemma \ref{le:s3_local_bound_on_dual_func}.
We now prove the third one.
Let us denote by $x_{i}^{\tau}(y) := x_{i}^c + \tau(x^{*}_{i}(y) - x^c_{i})$, where $\tau\in [0,1]$.
Since $F_i$ is $\nu_i$-self-concordant, it follows from \cite[inequality (2.3.3)]{Nesterov1994} that
\begin{equation*}\label{eq:s3_proof1_est1}
F_i(x_{i}^{\tau}(y)) \leq F_i(x^c_{i}) - \nu_i\ln(1-\tau), ~\tau \in [0, 1).
\end{equation*}
Combining this inequality and the concavity of $\phi_i$ we have
\begin{eqnarray}\label{eq:s3_proof1_est2}
d_i(y, t) &&= \max_{x_{i}\in\textrm{int}(X_{i})}\left\{ \phi_i(x_{i}) + y^TA_{i}x_{i} - t[F_i(x_{i}) - F_i(x_{i}^c)]\right\}\nonumber\\
&&\geq \max_{\tau\in [0,1)}\left\{ \phi_i(x^{\tau}_{i}(y)) + y^TA_{i}x_{i}^{\tau}(y) - t[F_i(x^{\tau}_{i}(y)) - F_i(x_{i}^c)] \right\}\nonumber\\
[-1.5ex]\\[-1.5ex]
&&\geq \max_{\tau\in [0,1)}\Big\{ (1 \!-\! \tau)[\phi_i(x_{i}^c) \!+\! y^TA_{i}x_{i}^c] \!+\! \tau[\phi_i(x^{*}_{i}(y)) \!+\! y^TA_{i}x_{i}^{*}(y)]  \!+\!
t\nu_1\ln(1 \!-\! \tau)\Big\} \nonumber\\
&& = \max_{\tau\in [0,1)}\left\{ (1-\tau)d_i^c(y) + \tau d_i(y) + t\nu_i\ln(1-\tau)\right\}. \nonumber
\end{eqnarray}
Now, we maximize the function $\xi(\tau) := (1-\tau)d_i^c(y) + \tau d_i(y) + t\nu_i\ln(1-\tau)$ in last line of \eqref{eq:s3_proof1_est2} with respect to
$\tau\in [0, 1)$ to obtain $\tau^{*} = \left[1 -  \frac{t\nu_i}{d_i(y) - d_i^c(y)}\right]_{+}$, where $[a]_{+} := \max\{0,a\}$.
Therefore, if $\frac{d_i(y) - d_i^c(y)}{t\nu_i}\leq 1$, i.e. $\tau^{*} = 0$, then $d_i(y) - d_i^c(y) \leq t\nu_i$. Otherwise, by substituting $\tau^{*}$ into
the last line of
\eqref{eq:s3_proof1_est2}, we obtain
\begin{equation*}\label{eq:s3_proof1_est3}
d_i(y) \leq d_i(y,t) + t\nu_i\left(1 + \left[\ln\frac{d_i(y) - d_i^c(y)}{t\nu_i}\right]_{+}\right).
\end{equation*}
Summing up this inequality for $i=1,2$ we get \eqref{eq:s3_bound_of_smoothed_dual_func_to_orig_dual_func}.
\end{proof}
%+ End of the proof.

%+ A.3. The erivation of Lemma 3.1.c.
\subsection{The proof of Lemma \ref{le:s3_choice_of_para_t}}
\begin{proof}
Let us fix $\kappa \in (0, 1)$, it is trivial that  $\ln(x^{-1}) \leq x^{-\kappa}$ for $0 < x \leq \kappa^{1/\kappa}$.
Therefore, we have
\begin{equation*}
\nu_it(1+[\ln(K_i/(\nu_it)]_{+}) \leq \nu_it(1 +
(K_i/(\nu_it))^{-\kappa}) \leq [\nu_i + \nu_i(K_i/\nu_i)^{\kappa}]t^{1-\kappa}, ~~ \forall t \leq \frac{\nu_i}{K_i}\kappa^{1/\kappa}.
\end{equation*}
Consequently, if $ t \leq \min\big\{\frac{\nu_i}{K_i}\kappa^{1/\kappa}, \big(\frac{\varepsilon}{2[\nu_i + \nu_i(K_i/\nu_i)^{\kappa}}\big)^{1/(1-\kappa)}\big\}$ 
then
$\nu_it(1+[\ln(K_i/(\nu_it)]_{+}) \leq 0.5\varepsilon$.
Summing up this inequality for $i=1,2$, we get \eqref{eq:s3_choice_of_t} in Lemma \ref{le:s3_choice_of_para_t}.
\end{proof}

%%%%%%%%%%%%%%%%%%%%%%%%%%%%%%%%%%%%%%%%%%%%%%%%%%%%%%%%%%%%%%%%%%%%%%%%%%%%%%%%%%%%%%%%%%%%%
%+ References.
%%%%%%%%%%%%%%%%%%%%%%%%%%%%%%%%%%%%%%%%%%%%%%%%%%%%%%%%%%%%%%%%%%%%%%%%%%%%%%%%%%%%%%%%%%%%%
\bibliographystyle{plain}

\end{document}